\newtheorem{thm}{Theorem}[section]
\newcommand{\vertiii}[1]{{\left\vert\kern-0.25ex\left\vert\kern-0.25ex\left\vert #1 
    \right\vert\kern-0.25ex\right\vert\kern-0.25ex\right\vert}}
\begin{document}
\title[External products of spectral metric spaces]{External products of spectral metric spaces}

\dedicatory{Dedicated to Fritz Gesztesy on the occasion of his 70th birthday.}

\author{Jens Kaad}

\address{Department of Mathematics and Computer Science,
The University of Southern Denmark,
Campusvej 55, DK-5230 Odense M,
Denmark}

\email{kaad@imada.sdu.dk}

\subjclass[2020]{58B34; 19K35, 46L89, 46L30} 

\keywords{Quantum metric spaces, External products of spectral triples, Operator systems, Minimal tensor products.}



\begin{abstract}
In this paper, we present a characterization of compact quantum metric spaces in terms of finite dimensional approximations. This characterization naturally leads to the introduction of a matrix analogue of a compact quantum metric space. As an application, we show that matrix compact quantum metric spaces are stable under minimal tensor products and more specifically that matrix spectral metric spaces are stable under the external product operation on unital spectral triples. We present several noncommutative examples of matrix compact quantum metric spaces. 
\end{abstract}

\maketitle
\tableofcontents

\section{Introduction}
The theory of compact quantum metric spaces was initiated by Rieffel in \cite{Rie:MSA,Rie:GHQ} and expands the classical theory of compact metric spaces to the quantized (noncommutative) setting. The main object of study is the state space for an underlying system of observables which in Rieffel's approach is represented by an order unit space. The focus lies on the interplay between seminorms on the system of observables and metrics on the state space and in particular on seminorms which recover the weak-$*$-topology via their associated state space metric.

From the very beginning, there has been a substantial overlap between noncommutative geometry \'a la Connes, \cite{Con:NCG,Con:CFH,Con:GCM}, and the theory of compact quantum metric spaces. The central object in noncommutative geometry is a spectral triple and the metric aspect of noncommutative geometry relies precisely on the fact that the abstract Dirac operator induces an extended metric on the state space, \cite{Con:NCG,Con:CFH}. The relevant seminorm is here given by taking the norm of the commutator between the Dirac operator and an element in the underlying coordinate algebra. A unital spectral triple which, in this fashion, recovers the weak-$*$-topology on the state space of the coordinate algebra is called a spectral metric space. 

There has been a rich and still ongoing development with regards to the Gromov-Hausdorff convergence of compact quantum metric spaces. This development was initiated by Rieffel in \cite{Rie:GHQ} but has been significantly advanced by Latr\'emoli\`ere by taking into account more and more structure, \cite{Lat:QGP,Lat:MGP,Lat:CGP}, recently culminating in a convergence theory for spectral metric spaces, \cite{Lat:PMS}.

In this paper we advance the theory of compact quantum metric spaces by investigating the compatibility between compact quantum metric spaces and tensor products. This relationship is particularly interesting within the context of spectral metric spaces since there is a canonical tensor product construction available for spectral triples called the external product, \cite{BaJu:TBK}. This external product of spectral triples is moreover compatible with the external product in analytic $K$-homology via the Baaj-Julg bounded transform, \cite{BaJu:TBK,Kuc:PUM,HiRo:AKH,Kas:OFE}.

We may thus formulate the following question:
\medskip

\emph{Given two spectral metric spaces, can we conclude that their external product is again a spectral metric space?}
\medskip

At this level of generality we have been unable to provide a satisfactory answer and this stems from a slight lack of information regarding matricial stabilization of spectral metric spaces. Recall in this respect that a spectral triple $(\C A,H,D)$ can be stabilized by $(n \ti n)$-matrices yielding the new spectral triple $(M_n(\C A),H^{\op n}, D^{\op n})$. Hence, we do not only have a canonical seminorm on $\C A$ but also a canonical seminorm on all the matrix algebras over $\C A$ coming from norms of commutators with the direct sum of the abstract Dirac operator with itself. These matricial stabilizations already play a pivotal role in the development of unbounded $KK$-theory, \cite{Mes:UCN,KaLe:SFU,MeRe:NST}, and it is therefore not surprising that they are relevant in the study of external products of spectral metric spaces as well. In unbounded $KK$-theory, the central construction is the internal unbounded Kasparov product which is a far reaching generalization of the external product of unital spectral triples.  

When considering the metric aspects of an external product of two spectral metric spaces, our first task is thus to refine the notion of a compact quantum metric space by taking into account the state spaces of all the matricial stabilizations. This endeavor naturally takes place within the framework of operator systems since matrices over operator systems again have canonical state spaces. Our search for an appropriate matricial refinement proceeds through a novel characterization of compact quantum metric spaces by means of finite dimensional approximations. The defect in accuracy of such a finite dimensional approximation is controlled by the seminorm under investigation. We remark in passing that the approximation methods introduced in this paper appear to be an efficient way of verifying that a given candidate is a compact quantum metric space.

Let us briefly explain how our matricial refinement works for spectral metric spaces, but emphasize that the refinement also applies to more general compact quantum metric spaces resulting in a novel concept called a \emph{matrix compact quantum metric space}.

Given a unital spectral triple $(\C A,H,D)$ we do not merely focus on finite dimensional approximations at the level of the coordinate algebra $\C A$ but instead on finite dimensional approximations which apply simultaneously to all finite matrices with entries in the coordinate algebra. We then say that our unital spectral triple is a \emph{matrix spectral metric space}, when it can be approximated arbitrarily well by finite dimensional data in a uniform fashion where all the matrix algebras are taken into account. For each $n \in \nn$, the defect in accuracy at the matrix algebra level is measured by the seminorm coming from the commutator interaction between $M_n(\C A)$ and the $n$-fold direct sum $D^{\op n}$. Using this kind of matricial approximation we establish the following:

\begin{thm}
The external product of two matrix spectral metric spaces is again a matrix spectral metric space. 
\end{thm}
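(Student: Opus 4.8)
The plan is to deduce the statement from the more general principle that the minimal tensor product of two matrix compact quantum metric spaces is again a matrix compact quantum metric space, with the finite dimensional approximation characterization serving as the main engine. First I would pin down the relationship between the external product of the two unital spectral triples $(\C A,H_1,D_1)$ and $(\C B,H_2,D_2)$ and the minimal tensor product of their underlying operator systems. Writing $D$ for the external product Dirac operator, the key observation is that $D^{\op n}$ induces on each matrix algebra $M_n(\C A \otimes \C B)$ a commutator seminorm $L_n$ which, via the graded Leibniz rule $[D, a \otimes b] = [D_1,a] \otimes b \pm a \otimes [D_2,b]$ together with the anticommutation of the two summands, \emph{dominates} the two partial seminorms obtained by differentiating in only one of the variables. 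Identifying these partial seminorms with the two matrix seminorm structures on $\C A$ and $\C B$ exhibits $L_n$ as controlling the minimal tensor product structure, so that it suffices to prove stability of matrix compact quantum metric spaces under minimal tensor products and then specialize.

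For the stability argument I would proceed as follows. Fix $\varepsilon > 0$. Applying the finite dimensional approximation characterization to each factor produces finite-rank unital completely positive maps $P_1$ and $P_2$ on the respective operator systems satisfying, at every matrix level $n \in \nn$,
\begin{equation*}
\| x - P_1^{(n)}(x) \| \le \varepsilon\, L_n^{(1)}(x) \quad\text{and}\quad \| y - P_2^{(n)}(y) \| \le \varepsilon\, L_n^{(2)}(y)
\end{equation*}
for all $x$ and $y$ in the relevant matrix spaces. By functoriality of the minimal tensor product under unital completely positive maps, $P_1 \otimes P_2$ extends to a finite-rank unital completely positive map on the product, hence is a legitimate candidate for a finite dimensional approximation. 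I would telescope the error as
\begin{equation*}
\mathrm{id} - P_1 \otimes P_2 = (\mathrm{id} - P_1) \otimes \mathrm{id} + P_1 \otimes (\mathrm{id} - P_2),
\end{equation*}
bounding the first contribution by the partial seminorm in the first variable (using the estimate for $P_1$ and complete isometry of the identity on the second factor) and the second by the partial seminorm in the second variable (using the estimate for $P_2$ and complete contractivity of $P_1$). Combined with the domination recorded above, each partial seminorm is controlled by $L_n$, so the total approximation defect at level $n$ is bounded by a fixed multiple of $\varepsilon\, L_n$, uniformly in $n$. Feeding this back into the finite dimensional approximation characterization then certifies the external product as a matrix spectral metric space.

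The main obstacle I anticipate is making the telescoping estimate genuinely completely bounded and uniform across all matrix levels at once. On elementary tensors the inequalities are transparent, but a general element of $M_n(\C A \otimes \C B)$ is merely a limit of sums of elementary tensors in the minimal norm, and one must ensure that the partial seminorm bounds survive passage to these completions while remaining compatible with the matricial structure. Concretely, the delicate point is to verify that the error map $(\mathrm{id} - P_1) \otimes \mathrm{id}$ is governed by the matricial tensor seminorm rather than only on simple tensors; this requires the domination of the partial seminorms by $L_n$ to hold \emph{completely}, that is, at all matrix levels simultaneously, and to interact correctly with the completely positive functoriality of the minimal tensor product. Securing this compatibility, so that the single scalar $\varepsilon$ controls the defect on every $M_n$ uniformly, is where I expect the bulk of the technical effort to lie.
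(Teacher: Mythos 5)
Your strategy coincides with the paper's: reduce to a general stability theorem for minimal tensor products of matrix compact quantum metric spaces (Theorem \ref{t:extquamet}), verify that the product seminorm $L_{(D_1 \ti D_2)^{\op s}}$ dominates the two partial seminorms $(\B L_{D_1} \ot 1)_s$ and $(1 \ot \B L_{D_2})_s$ by exploiting the anticommutation of the two summands of $D_1 \ti D_2$ (conjugation by $\ga_1 \ot 1$, respectively by Pauli matrices in the odd--odd case), and build the matricial approximation of the product from $\Phi_{\C A_1} \ot \Phi_{\C A_2}$ via exactly the telescoping identity you write down, which is the content of Proposition \ref{p:tensapprox}.

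There is, however, one genuine gap. A matrix compact quantum metric space is defined by \emph{two} conditions --- the existence of matricial $\ep$-approximations \emph{and} uniformly finite diameter --- and your proposal addresses only the first. Uniformly finite diameter of the tensor product does not follow from the approximation estimate; in the paper it is Proposition \ref{p:tensfindia}, whose proof requires the separate inequality $\| z - (\psi \ot 1)_s(z)\| \leq 2C \cd (\B L \ot 1)_s(z)$ for a state $\psi$ on the first factor (Lemma \ref{l:whyfac}), obtained by comparing $z$ with the scalar matrix $\varphi_s(\psi \ot 1)_s(z)$ and invoking the uniformly finite diameter of each factor. Without this, your argument certifies only half of the definition. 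Separately, the ``main obstacle'' you flag --- propagating the elementary-tensor estimates to general $z \in M_s(\C A_1 \ot \C A_2)$ --- is the right concern but has a clean resolution that you should make explicit rather than leave open: the tensor product here is the \emph{algebraic} one (no completion of elements is involved), and the $C^*$-norm of any $w \in M_s(\C A_1 \ot \C A_2)$ equals the supremum of $\|(\psi \ot 1)_s(w)\|$ over all $\psi \in \T{UCP}_\infty(\C A_1)$; since these slice maps intertwine $1 \ot (\io_{\C A_2} - \Phi_{\C A_2})$ with $(\io_{\C A_2} - \Phi_{\C A_2})$ applied at matrix level $s \cd m$, the hypothesis at all matrix levels of the second factor converts directly into the bound by $\ep \cd (1 \ot \B L_{D_2})_s(z)$. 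With these two additions your outline becomes the paper's proof.
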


In order to substantiate our definition of a matrix compact quantum metric space we provide several noncommutative examples coming from ergodic actions, noncommutative tori and the Podle\'s sphere. These examples already appear in \cite{Rie:MSA,Li:DCM,AgKa:PSM,AKK:PSC}, but we are able to improve on these results by showing that they are examples of matrix compact quantum metric spaces instead of just compact quantum metric spaces.

In the papers \cite{AgBi:SHC,Agu:QTC}, the authors consider tensor products of continuous functions on compact metric spaces and AF-algebras and, among other things, they provide such tensor products with compact quantum metric space structures. We believe that the constructions and main theorems presented in this paper can be applied to put parts of \cite[Theorem 2.10]{AgBi:SHC} and \cite[Theorem 3.10]{Agu:QTC} on firmer conceptual grounds. At present we have decided to leave out a detailed investigation of these matters but hope to find time to revisit this relationship at a later stage.   

The present paper grew out of an urge to develop a bivariant theory of spectral metric data where the internal unbounded Kasparov product can be applied to construct new spectral metric spaces in a systematic and canonical fashion. Even though we are still far from seeing the contours of such an ambitious program, it seems that a natural first step in this direction is to investigate external products of spectral metric spaces and this is exactly what we are doing here. 

\subsection{Acknowledgements} 
The author gratefully acknowledge the financial support from the Independent Research Fund Denmark through grant no.~9040-00107B, 7014-00145B and 1026-00371B.

This work also benefited from many nice discussions with friends and colleagues, in particular Konrad Aguilar, David Kyed and Walter van Suijlekom. 

\subsection{Standing conventions} 
The notation $\| \cd \|$ will always refer to the unique $C^*$-norm on a $C^*$-algebra. To avoid treating irrelevant special cases all Hilbert spaces and $C^*$-algebras in this text are assumed to be non-trivial (different from $\{0\}$). 

For a Hilbert space $H$ we let $\B L(H)$ denote the unital $C^*$-algebra of bounded operators on $H$. 
%

For a compact Hausdorff space $M$ and a unital $C^*$-algebra $A$, the notation $C(M,A)$ refers to the unital $C^*$-algebra of continuous maps from $M$ to $A$. In the case where $A = \cc$ we often write $C(M)$ instead of $C(M,A)$.

We shall sometimes consider an extended metric $\rho : M \ti M \to [0,\infty]$ on a set $M$. This is a map which satisfies all the usual properties of a metric except that the distance between two points can be equal to infinity. Given a point $p$ in an (extended) metric space $M$ we let $\B B_r(p)$ and $\ov{\B B}_r(p)$ denote the open and closed balls, respectively, with center $p$ and radius $r \geq 0$. In particular, this notation applies to open and closed balls in normed vector spaces where the metric comes from the norm in the usual way.

We apply the convention that the supremum and the infimum of the empty set are equal to zero.

\section{Preliminaries}


\subsection{Operator systems}\label{ss:opsys}
We are here presenting a few basic definitions mainly relating to operator systems. The present text deals exclusively with concrete operator systems but we emphasize that the theory has a beautiful abstract counterpart, see \cite{ChEf:IOS}.

\begin{dfn}\label{d:opesys}
  An \emph{operator system} $\C X$ is a unital $*$-invariant subspace of a unital $C^*$-algebra $A$. We refer to the norm on $\C X$ inherited from $A$ as the \emph{$C^*$-norm} on $\C X$. 
  The operator system $\C X \su A$ is said to be \emph{complete} when it is closed in $C^*$-norm. 

  A linear subspace $\C Y \su \C X$ is called a sub-operator system of $\C X$ when $1_{\C X} \in \C Y$ and $y^* \in \C Y$ for all $y \in \C Y$.
\end{dfn}


Remark that a unital $C^*$-algebra $A$ becomes a complete operator system when considered as a unital $*$-invariant subspace of itself. 

For the rest of this subsection we fix an operator system $\C X \su A$. The $*$-invariance condition means that $x^* \in \C X$ for all $x \in \C X$ and we emphasize that the unit $1_{\C X}$ in the operator system $\C X$ has to agree with the unit in the unital $C^*$-algebra $A$. The $C^*$-norm closure of $\C X$ is denoted by $X$ and this is a complete operator system. We say that an element $x \in \C X$ is \emph{positive} when $x$ is positive as an element in the unital $C^*$-algebra $A$. 

For each $n \in \nn$, we can also regard the $(n \ti n)$-matrices with entries in $\C X$ as an operator system. Indeed, $M_n(\C X)$ can be thought of as a unital subspace of the unital $C^*$-algebra $M_n(A)$ and this subspace is again invariant under the adjoint operation. We may thus talk about completely bounded operators and completely positive operators acting between operator systems. 

\begin{dfn}\label{d:cpcb}
Let $\C X \su A$ and $\C Y \su B$ be operator systems and let $\al : \C X \to \C Y$ be a linear map. For each $n \in \nn$ we let $\al_n : M_n(\C X) \to M_n(\C Y)$ denote the linear map obtained by applying $\al$ entrywise. We say that $\al$ is
\begin{enumerate}
\item \emph{completely positive} when $\al_n : M_n(\C X) \to M_n(\C Y)$ is positive for all $n \in \nn$;
\item \emph{completely bounded} when there exists a constant $C \geq 0$ such that
\[
\| \al_n(x) \| \leq C \cd \| x \|
\]
for all $n \in \nn$ and all $x \in M_n(\C X)$;
\item \emph{completely contractive} when $\| \al_n(x) \| \leq \| x \|$ for all $n \in \nn$ and all $x \in M_n(\C X)$;
\item \emph{completely isometric} when $\| \al_n(x) \| = \| x \|$ for all $n \in \nn$ and all $x \in M_n(\C X)$.
\end{enumerate}
\end{dfn}

Notice that if $\al : \C X \to \C Y$ is a unital linear map between operator systems, then it holds that $\al$ is completely contractive if and only if $\al$ is completely positive, see e.g. the discussion in \cite[\S 1.3.2 and \S 1.3.3]{BlMe:OMO}. 

Remark also that if $\C Y$ sits as an operator system inside $C(M)$ where $M$ is a compact Hausdorff space, then every positive linear map $\al : \C X \to \C Y$ is automatically completely positive. 


\begin{dfn}\label{d:opesta}
For each $n \in \nn$ we let $\T{UCP}_n(\C X)$ denote the set of unital completely positive maps from $\C X$ to $M_n(\cc)$. We refer to $\T{UCP}_n(\C X)$ as the \emph{matrix state space} in degree $n$. The notation $\T{UCP}_\infty(\C X)$ refers to the disjoint union of matrix state spaces $\coprod_{n = 1}^\infty \T{UCP}_n(\C X)$.
\end{dfn}

Let $n \in \nn$. We equip the matrix state space $\T{UCP}_n(\C X)$ with the point-norm topology and record that $\T{UCP}_n(\C X)$ in this fashion becomes a compact Hausdorff space. For $n = 1$ we note that $\T{UCP}_1(\C X)$ agrees with the usual state space $S(\C X)$ equipped with the weak-$*$-topology. We define the unital positive linear map
\[
\io : \C X \to C\big( \T{UCP}_n(\C X), M_n(\cc) \big) \q \io(x)(\varphi) = \varphi(x) 
\]
and record that $\io$ is an isometry as soon as $n \geq 2$. For $n = 1$ it holds that $\| \io(x) \| = \| x \|$ whenever $x \in \C X$ is selfadjoint. It is also important to note that the matrix state spaces $\T{UCP}_n(\C X)$ and $\T{UCP}_n(X)$ are homeomorphic via the restriction map $\T{UCP}_n(X) \to \T{UCP}_n(\C X)$ (recalling here that $X \su A$ denotes the $C^*$-norm closure of $\C X \su A$). Remark that our matrix state space in degree $n$ is different from the noncommutative state space $\C S_n(X)$ considered in \cite[Section 2.3]{CoSu:TRO} (see the discussion in \cite[Section 2]{Ker:MQG}).
%

The operator system $\C X$ gives rise to an order unit space
\[
\C X_{\T{sa}} := \big\{ x \in \C X \mid x = x^* \big\},
\]
where the order unit is the unit $1_{\C X}$ from $\C X$ and the partial order is inherited from the partial order on the set of selfadjoint elements in $A$. The state space $S(\C X_{\T{sa}})$ for the order unit space is isomorphic to the state space $S(\C X)$ for the operator system. Indeed, we may extend a state $\varphi : \C X_{\T{sa}} \to \rr$ to a state $\wit{\varphi} : \C X \to \cc$ by putting $\wit{\varphi}(x) := \varphi\big(\T{re}(x)\big) + i \cd \varphi\big(\T{im}(x)\big)$, where $\T{re}(x)$ and $\T{im}(x)$ denote the real part and the imaginary part of $x$, respectively.

The above order unit space $\C X_{\T{sa}}$ is an example of a concrete order unit space. Similarly to the situation for operator systems there is also an abstract counterpart to the theory of order unit spaces, see \cite{Kad:RTC}. 
%
%
%
%

\subsection{Compact quantum metric spaces}
Throughout this subsection $\C X \su A$ is an operator system and $L : \C X \to [0,\infty)$ is a seminorm on $\C X$. The scalars $\cc$ are tacitly identified with the closed subspace $\B C \cd 1_{\C X} \su \C X$.  

The quotient norm on $\C X / \cc$ is denoted by $\| \cd \|_{\C X/\cc} : \C X / \cc \to [0,\infty)$ and the quotient map is denoted by $[\cd ] : \C X \to \C X / \cc$.

We are now going to introduce the notion of a compact quantum metric space. It is worthwhile to mention that the foundational work of Rieffel on compact quantum metric spaces was mainly carried out in the context of (abstract) order unit spaces, see for example \cite{Rie:MSS,Rie:GHQ}. In more recent times, the focus seems to change towards operator systems and this is indeed the framework we are using in the present text, see \cite{CoSu:STN,Sui:GSS,CoSu:TRO}. We believe that the operator system framework for compact quantum metric spaces was first treated in \cite{Ker:MQG}. Let us however emphasize that the operator systems appearing in this text are not assumed to be complete.

We define the seminorm closed ball with center $0$ and radius $r \geq 0$ by
\[
\ov{\B B}_r(0,L) := \big\{ x \in \C X \mid L(x) \leq r \big\} .
\]
The seminorm $L : \C X \to [0,\infty]$ is called \emph{lower semicontinuous} when it holds for each $r \geq 0$ that $\ov{\B B}_r(0,L)$ is closed as a subset of $\C X$ equipped with the topology coming from the $C^*$-norm.

\begin{dfn}\label{d:lipschitz}
We say that the seminorm $L : \C X \to [0,\infty)$ is a \emph{Lipschitz seminorm} when the following conditions hold:
\begin{enumerate}
\item $L$ is \emph{$*$-invariant}, meaning that $L(x^*) = L(x)$ for all $x \in \C X$;
\item $L$ \emph{vanishes on scalars}, meaning that $L(1_{\C X}) = 0$.
\end{enumerate}
\end{dfn}

The \emph{kernel} of the seminorm $L : \C X \to [0,\infty)$ is defined as the subspace
\[
\T{ker}(L) := \{ x \in \C X \mid L(x) = 0 \} \su \C X .
\]

\begin{remark}\label{r:kernel}
It is common to require that the kernel of a Lipschitz seminorm agrees with the scalars $\cc \su \C X$. In this text we only assume that $\cc \su \T{ker}(L)$ since the reverse inclusion need not hold for seminorms arising from unital spectral triples as described in Subsection \ref{ss:spemetspa}. Moreover, the reverse inclusion holds automatically if $(\C X,L)$ is a compact quantum metric space in the sense of Definition \ref{d:cqms}.
%
\end{remark}

Let us fix a Lipschitz seminorm $L : \C X \to [0,\infty)$. For each $n \in \nn$ we define the \emph{Monge-Kantorovich metric} on the matrix state space by
\[
\begin{split}
& \rho_L : \T{UCP}_n(\C X) \ti \T{UCP}_n(\C X) \to [0,\infty] \\ 
& \rho_L(\varphi,\psi) 
:= \sup\big\{ \| \varphi(x) - \psi(x)\| \mid x \in \ov{\B B}_1(0,L) \big\} .
\end{split}
\]
Notice that $\rho_L$ is allowed to take the value infinity but that $\rho_L$ satisfies all the remaining properties of a metric, thus $\rho_L$ is an extended metric. We are interested in the metric topology on the matrix state space $\T{UCP}_n(\C X)$ with basis consisting of all the open balls (with finite radii).

\begin{dfn}\label{d:cqms}
We say that the pair $(\C X,L)$ is a \emph{compact quantum metric space} when the Monge-Kantorovich metric $\rho_L$ metrizes the weak-$*$-topology on the state space $S(\C X) = \T{UCP}_1(\C X)$.
\end{dfn}

As mentioned in Remark \ref{r:kernel} it holds that if $(\C X,L)$ is a compact quantum metric space, then $\T{ker}(L) = \cc$. This implication can be proved using the argument from \cite[Lemma 2.2]{KaKy:DCQ}. 


The next result follows from \cite[Theorem 1.8]{Rie:MSA}, but see also \cite[Theorem 6.3]{Pav:DOA}. 

\begin{thm}\label{t:totbou}
Let $\C X$ be an operator system equipped with a Lipschitz seminorm $L : \C X \to [0,\infty)$. The following conditions are equivalent:
\begin{enumerate}
\item $(\C X,L)$ is a compact quantum metric space;
\item the image of the seminorm closed unit ball $\, \ov{\B B}_1(0,L)$ under the quotient map $[\cd ] : \C X \to \C X/ \cc$ is totally bounded with respect to the quotient norm.
\end{enumerate}
\end{thm}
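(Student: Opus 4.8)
The plan is to prove the two implications separately, in both cases mediating between the quotient norm on $\C X/\cc$ and uniform norms of the affine evaluation functions $\io(x) : \varphi \mapsto \varphi(x)$ on the compact state space $S(\C X) = \T{UCP}_1(\C X)$, using the recorded fact that $\|\io(x)\| = \|x\|$ for selfadjoint $x$. Two reductions will be used repeatedly. First, for any states $\varphi,\psi$ and any scalar $\lambda$ unitality gives $\varphi(x) - \psi(x) = \varphi(x - \lambda 1_{\C X}) - \psi(x - \lambda 1_{\C X})$, hence $|\varphi(x) - \psi(x)| \leq 2\|[x]\|_{\C X/\cc}$; since condition (2) makes $[\ov{\B B}_1(0,L)]$ bounded, this forces $\rho_L$ to have finite diameter, and (as $L$ is finite-valued) to separate states, so that its topology is Hausdorff. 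Second, since $L$ is $*$-invariant, the parts $\T{re}(x)$ and $\T{im}(x)$ of any $x \in \ov{\B B}_1(0,L)$ again lie in $\ov{\B B}_1(0,L)$, so $[\ov{\B B}_1(0,L)]$ is contained in the Minkowski sum $[\ov{\B B}_1^{\T{sa}}] + i \cd [\ov{\B B}_1^{\T{sa}}]$ with $\ov{\B B}_1^{\T{sa}} := \ov{\B B}_1(0,L) \cap \C X_{\T{sa}}$; as total boundedness is preserved by finite Minkowski sums and by continuous maps, it suffices for (2) to prove total boundedness of $[\ov{\B B}_1^{\T{sa}}]$.

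For $(2) \Rightarrow (1)$ I would show that the identity map from $S(\C X)$ with the weak-$*$-topology to $S(\C X)$ with the $\rho_L$-topology is continuous; since the domain is compact and the codomain Hausdorff, this continuous bijection is automatically a homeomorphism, which is exactly (1). To prove continuity, fix $\epsilon > 0$ and use (2) to pick $x_1,\dots,x_m \in \ov{\B B}_1(0,L)$ whose classes form an $\epsilon/4$-net for $[\ov{\B B}_1(0,L)]$. For arbitrary $x \in \ov{\B B}_1(0,L)$ choose $j$ and $\lambda \in \cc$ with $\|x - x_j - \lambda 1_{\C X}\| < \epsilon/4$, so that
\[
|\varphi(x) - \psi(x)| \leq |\varphi(x_j) - \psi(x_j)| + 2\|x - x_j - \lambda 1_{\C X}\| < |\varphi(x_j) - \psi(x_j)| + \epsilon/2 .
\]
Taking the supremum over $x$ yields $\rho_L(\varphi,\psi) \leq \max_j |\varphi(x_j) - \psi(x_j)| + \epsilon/2$, and since the finitely many evaluations $\varphi \mapsto \varphi(x_j)$ are weak-$*$-continuous, every state weak-$*$-near $\varphi$ is $\rho_L$-near $\varphi$.

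For the harder implication $(1) \Rightarrow (2)$ I would transport total boundedness of the compact metric space $(S(\C X),\rho_L)$ back to $\C X_{\T{sa}}/\rr$ through the map $[y] \mapsto [\io(y)]$ into $C(S(\C X))/\cc$. For selfadjoint $y$ the optimal scalar in $\inf_\lambda \|y - \lambda 1_{\C X}\|$ is real, and combined with $\|\io(y - \lambda 1_{\C X})\| = \|y - \lambda 1_{\C X}\|$ this gives the isometry $\|[y]\|_{\C X/\cc} = \inf_{\lambda \in \rr}\|\io(y) - \lambda\|_\infty = \|[\io(y)]\|_{C(S(\C X))/\cc}$. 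For each $y \in \ov{\B B}_1^{\T{sa}}$ the function $\io(y)$ is $1$-Lipschitz with respect to $\rho_L$, since $|\varphi(y) - \psi(y)| \leq \rho_L(\varphi,\psi)$, and after subtracting the constant $\io(y)(\varphi_0)$ at a fixed base state $\varphi_0$ these functions vanish at $\varphi_0$ and are therefore uniformly bounded by the finite diameter of the compact metric space $(S(\C X),\rho_L)$. Arzel\`a--Ascoli then shows this family is totally bounded in the uniform norm of $C(S(\C X))$; applying the contractive quotient map $C(S(\C X)) \to C(S(\C X))/\cc$ and then the inverse of the isometry above yields total boundedness of $[\ov{\B B}_1^{\T{sa}}]$, which by the first paragraph gives (2).

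I expect the main obstacle to be the direction $(1) \Rightarrow (2)$, and within it the correct packaging of the Arzel\`a--Ascoli step: one must check that the affine functions $\io(y)$ form an equicontinuous and uniformly bounded family precisely because $\rho_L$ metrizes the compact weak-$*$-topology, so that $S(\C X)$ is a compact metric space of finite diameter, and one must identify the quotient norm on $\C X_{\T{sa}}/\rr$ isometrically as a quotient of the uniform norm on $C(S(\C X))$ rather than merely bounding it. The reduction to selfadjoint elements via $*$-invariance and the passage between the real and complex quotient norms are the remaining points that require care.
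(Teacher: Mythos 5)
Your proof is correct. The paper does not actually prove Theorem \ref{t:totbou} but defers to \cite[Theorem 1.8]{Rie:MSA} (and \cite[Theorem 6.3]{Pav:DOA}), and your argument is essentially Rieffel's original one adapted from order unit spaces to operator systems: the reduction to selfadjoint elements via $*$-invariance of $L$, the compact-to-Hausdorff continuous-bijection trick for $(2) \Rightarrow (1)$, and the isometric transport of $[\,\ov{\B B}_1(0,L) \cap \C X_{\T{sa}}]$ into $C\big(S(\C X)\big)/\cc$ followed by Arzel\`a--Ascoli for $(1) \Rightarrow (2)$ all match the standard proof, with the delicate points (reality of the optimal scalar, contractivity of states, equicontinuity from the Lipschitz bound) handled correctly.
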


The final result of this subsection is due to Kerr and provides information on the Monge-Kantorovich metric on matrix state spaces, \cite[Proposition 2.12]{Ker:MQG}.

\begin{prop}\label{p:metmatsta}
Let $\C X$ be a operator system equipped with a Lipschitz seminorm $L : \C X \to [0,\infty)$. The following conditions are equivalent:
\begin{enumerate}
\item $(\C X,L)$ is a compact quantum metric space;
\item the Monge-Kantorovich metric $\rho_L$ metrizes the point-norm topology on the matrix state space $\T{UCP}_n(\C X)$ for all $n \in \nn$. 
\end{enumerate}
\end{prop}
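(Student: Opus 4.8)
The plan is to reduce everything to the total boundedness criterion of Theorem~\ref{t:totbou} and then to upgrade point-norm (i.e.\ pointwise) convergence to uniform convergence over the Lipschitz unit ball by a finite-net argument. The implication $(2) \Rightarrow (1)$ requires no work: specialising to $n = 1$ and recalling that the point-norm topology on $\T{UCP}_1(\C X)$ is exactly the weak-$*$-topology on $S(\C X)$, condition $(2)$ for $n = 1$ says precisely that $\rho_L$ metrizes the weak-$*$-topology, which is $(1)$.

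For $(1) \Rightarrow (2)$ I would first invoke Theorem~\ref{t:totbou} to record that $[\ov{\B B}_1(0,L)]$ is totally bounded, hence bounded, in $\C X / \cc$; say $\| [x] \|_{\C X/\cc} \leq M$ for all $x \in \ov{\B B}_1(0,L)$. Fix $n \in \nn$. The estimate on which everything rests is that for $\varphi, \psi \in \T{UCP}_n(\C X)$ the difference $\varphi - \psi$ annihilates $\cc \cd 1_{\C X}$ (both maps being unital), while $\varphi$ and $\psi$ are completely contractive (being unital and completely positive); replacing $x$ by $x - c \cd 1_{\C X}$ and taking the infimum over $c \in \cc$ therefore gives
\[
\| \varphi(x) - \psi(x) \| \leq 2 \, \| [x] \|_{\C X/\cc}, \q x \in \C X .
\]
Two things follow. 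First, $\rho_L(\varphi,\psi) \leq 2M < \infty$, so $\rho_L$ is finite; and if $\rho_L(\varphi,\psi) = 0$ then $\varphi = \psi$ on $\ov{\B B}_1(0,L)$ and hence, by homogeneity of $L$, on all of $\C X$, so $\rho_L$ separates points. Thus $\rho_L$ is a genuine metric and defines a Hausdorff topology on $\T{UCP}_n(\C X)$.

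The core step is to show that the identity map from $\T{UCP}_n(\C X)$ with the point-norm topology onto $\T{UCP}_n(\C X)$ with the $\rho_L$-topology is continuous. Suppose $\varphi_\lambda \to \varphi$ in point-norm and let $\varepsilon > 0$. Using total boundedness I would pick $x_1, \dots, x_k \in \ov{\B B}_1(0,L)$ whose classes form an $(\varepsilon/4)$-net for $[\ov{\B B}_1(0,L)]$. For arbitrary $x \in \ov{\B B}_1(0,L)$ choose $i$ with $\| [x] - [x_i] \|_{\C X/\cc} < \varepsilon/4$; the displayed inequality applied to $x - x_i$ then yields
\[
\| \varphi_\lambda(x) - \varphi(x) \| \leq \| \varphi_\lambda(x_i) - \varphi(x_i) \| + 2 \, \| [x - x_i] \|_{\C X/\cc} < \| \varphi_\lambda(x_i) - \varphi(x_i) \| + \varepsilon/2 .
\]
Since only finitely many $x_i$ occur, point-norm convergence makes the first term $< \varepsilon/2$ for all $i$ simultaneously once $\lambda$ is large, and taking the supremum over $x$ gives $\rho_L(\varphi_\lambda, \varphi) \leq \varepsilon$ eventually. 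Continuity of the identity map follows, and since the domain is compact while the codomain is Hausdorff, this continuous bijection is a homeomorphism; the two topologies therefore coincide for every $n$, which is $(2)$.

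The main obstacle is exactly this core step: point-norm convergence only controls each fixed evaluation, whereas $\rho_L$ demands uniformity across the whole ball $\ov{\B B}_1(0,L)$. The uniform estimate $\| (\varphi - \psi)(x) \| \leq 2 \| [x] \|_{\C X/\cc}$ together with the finite $\varepsilon$-net supplied by Theorem~\ref{t:totbou} is what bridges this gap; once it is in place, the finiteness and separation checks for $\rho_L$ and the compact-to-Hausdorff conclusion are routine.
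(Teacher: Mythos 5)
Your argument is correct: the uniform estimate $\| \varphi(x) - \psi(x) \| \leq 2 \, \| [x] \|_{\C X/\cc}$ for unital completely positive $\varphi, \psi$ (valid since such maps are completely contractive and kill the ambiguity in the choice of scalar), combined with the finite net supplied by Theorem~\ref{t:totbou} and the continuous-bijection-from-compact-to-Hausdorff trick, is exactly the standard route to this result. The paper itself gives no proof but defers to \cite[Proposition 2.12]{Ker:MQG}, whose argument proceeds along essentially the same lines, so there is nothing to flag.
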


\subsection{Spectral metric spaces}\label{ss:spemetspa}
We are in this text particularly interested in compact quantum metric spaces arising from noncommutative geometric data. We therefore recall the fundamental notion of a unital spectral triple and explore the relationship between unital spectral triples and compact quantum metric spaces, see \cite{Con:CFH,Con:NCG,Con:GCM}.

%

\begin{dfn}\label{d:spetri}
Let $\C A$ be a unital $*$-subalgebra of the bounded operators on a separable Hilbert space $H$ and let $D : \T{dom}(D) \to H$ be a selfadjoint unbounded operator. We say that the triple $(\C A,H,D)$ is a \emph{unital spectral triple} when the following holds:
\begin{enumerate}
\item for every $a \in \C A$ and $\xi \in \T{dom}(D)$ it holds that $a(\xi) \in \T{dom}(D)$ and the commutator
  \[
[D,a] : \T{dom}(D) \to H
\]
extends to a bounded operator $d(a) : H \to H$;
\item the resolvent $(i + D)^{-1} : H \to H$ is a compact operator.
\end{enumerate}
A unital spectral triple is \emph{even} when the separable Hilbert space $H$ is equipped with a $\zz/2\zz$-grading operator $\ga : H \to H$ such that $a : H \to H$ is even for all $a \in \C A$ and $D : \T{dom}(D) \to H$ is odd. Otherwise, a unital spectral triple is \emph{odd}.

We emphasize that the unit in $\C A$ is required to agree with the unit in $\B L(H)$.
\end{dfn}

Many of the results in the present paper are correct for triples $(\C A,H,D)$ which only satisfy condition $(1)$ of Definition \ref{d:spetri}. We shall refer to such a triple as a \emph{unital Lipschitz triple}. For a unital Lipschitz triple $(\C A,H,D)$, the unital $*$-subalgebra $\C A \su \B L(H)$ may be considered as an operator system and the commutator interaction between $\C A$ and $D$ yields the seminorm
\[
L_D : \C A \to [0,\infty) \q L_D(a) := \| d(a) \|.
  \]
The next result regarding $L_D$ is now easily established, except perhaps for lower semicontinuity which is proved in \cite[Proposition 3.7]{Rie:MSS}:


\begin{lemma}\label{l:liplip}
Suppose that $(\C A,H,D)$ is a unital Lipschitz triple. Then the seminorm $L_D : \C A \to [0,\infty)$ is a lower semicontinuous Lipschitz seminorm. 
\end{lemma}

%
%

The following terminology comes from \cite{BMR:DSS}: 

\begin{dfn}
A \emph{spectral metric space} is a unital spectral triple $(\C A,H,D)$ such that the pair $(\C A,L_D)$ is a compact quantum metric space.
\end{dfn}

\begin{remark}
There are many examples of unital spectral triples $(\C A,H,D)$, where $(\C A,L_D)$ is not a compact quantum metric space. In fact, if $(\C A,H,D)$ is a unital spectral triple we may consider the unital spectral triple $\big(M_2(\C A), H^{\op 2}, D^{\op 2}\big)$. This stabilization of $(\C A,H,D)$ by $(2 \ti 2)$-matrices is never a spectral metric space because $M_2(\cc) \su \T{ker}(L_{D^{\op 2}})$.
\end{remark}

\section{Finite dimensional approximation}\label{s:findimapp}
Throughout this section we let $\C X \su A$ be an operator system equipped with a Lipschitz seminorm $L : \C X \to [0,\infty)$. We are going to characterize what it means for $(\C X,L)$ to be a compact quantum metric space in terms of a particular kind of finite dimensional approximations. We start out by briefly discussing the diameter of the state space $S(\C X)$ equipped with the Monge-Kantorovich metric $\rho_L$. Recall that $[\cd ] : \C X \to \C X/\cc$ denotes the quotient map. 

\begin{dfn}\label{d:bounded}
We say that the pair $(\C X,L)$ has \emph{finite diameter} when there exists a constant $C \geq 0$ such that
\[
\| [x] \|_{\C X / \cc} \leq C \cd L(x) \q \mbox{for all } x \in \C X .
\]
\end{dfn}

Remark that if $(\C X,L)$ has finite diameter, then the kernel of the Lipschitz seminorm $L : \C X \to [0,\infty)$ agrees with the subspace $\cc \su \C X$. 

The next result, which justifies the terminology introduced in Definition \ref{d:bounded}, can be found in \cite[Proposition 1.6]{Rie:MSA} and in \cite[Proposition 2.2]{Rie:MSS}. Notice that Rieffel also relates the diameter of the state space to the infimum of the constants satisfying the inequality in Definition \ref{d:bounded}. 

\begin{prop}\label{p:bounded}
The following conditions are equivalent:
\begin{enumerate}
\item $(S(\C X), \rho_L)$ has finite diameter, meaning that $\sup\{ \rho_L(\mu,\nu) \mid \mu,\nu \in S(\C X) \} < \infty$;
\item $(\C X,L)$ has finite diameter.
 \end{enumerate}
\end{prop}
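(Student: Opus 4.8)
The plan is to prove both implications through the elementary duality between the quotient norm on $\C X/\cc$ and the spread of the values that states take on selfadjoint elements. The central observation is that for a selfadjoint element $x \in \C X_{\T{sa}}$ the distance to the scalars is governed by the oscillation of $x$ over the state space, namely
\[
\| [x] \|_{\C X/\cc} = \inf_{t \in \rr} \| x - t \cd 1_{\C X} \| = \tfrac{1}{2}\big( \sup_{\varphi \in S(\C X)} \varphi(x) - \inf_{\psi \in S(\C X)} \psi(x) \big) .
\]
The first equality holds because for selfadjoint $x$ the optimal complex scalar is automatically real (one computes $\| x - (a + ib) 1_{\C X}\|^2 = \| x - a 1_{\C X}\|^2 + b^2$ from the $C^*$-identity), and the second equality expresses that the states of $\C X$ realise the maximum and the minimum of the spectrum of $x$; here one uses that every state on the operator system $\C X$ is the restriction of a state on the containing $C^*$-algebra $A$, so that $\sup_\varphi \varphi(x)$ and $\inf_\psi \psi(x)$ recover the endpoints of the spectrum of $x \in A$. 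I expect the realisation of these spectral extremes by states of $\C X$ — which is what is genuinely needed in the $\leq$ direction below — to be the main technical point, although all the ingredients are standard order unit space facts.

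For the implication $(2) \Rightarrow (1)$ I would argue directly. Given the constant $C$ from Definition \ref{d:bounded}, note that for any two states $\mu, \nu \in S(\C X)$ the functional $\mu - \nu$ annihilates $1_{\C X}$ and hence factors through the quotient $\C X/\cc$ with norm at most $\| \mu \| + \| \nu \| = 2$. Therefore, for every $x \in \ov{\B B}_1(0,L)$,
\[
| \mu(x) - \nu(x) | \leq 2 \cd \| [x] \|_{\C X/\cc} \leq 2C \cd L(x) \leq 2C ,
\]
and taking the supremum over such $x$ yields $\rho_L(\mu,\nu) \leq 2C$ uniformly in $\mu$ and $\nu$, which is condition $(1)$.

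For the implication $(1) \Rightarrow (2)$ I would set $R := \sup\{ \rho_L(\mu,\nu) \mid \mu,\nu \in S(\C X)\} < \infty$ and first treat selfadjoint elements. If $x \in \C X_{\T{sa}}$ satisfies $L(x) \leq 1$, then $x$ is itself admissible in the supremum defining $\rho_L$, so $\varphi(x) - \psi(x) \leq |\varphi(x) - \psi(x)| \leq \rho_L(\varphi,\psi) \leq R$ for all states $\varphi, \psi$; feeding this into the displayed identity gives $\| [x] \|_{\C X/\cc} \leq R/2$. Rescaling by $1/L(x)$ (and handling the case $L(x) = 0$ by applying the bound to $t x$ for all $t > 0$, which forces $\| [x] \|_{\C X/\cc} = 0$) produces $\| [x] \|_{\C X/\cc} \leq \tfrac{R}{2} L(x)$ for every selfadjoint $x$. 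Finally I would pass to a general $x \in \C X$ by writing $x = \T{re}(x) + i \cd \T{im}(x)$ and invoking the $*$-invariance of $L$, which gives $L(\T{re}(x)), L(\T{im}(x)) \leq L(x)$; the triangle inequality for the quotient norm then yields $\| [x] \|_{\C X/\cc} \leq R \cd L(x)$, so that $(\C X,L)$ has finite diameter with constant $C = R$.
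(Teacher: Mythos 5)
Your proof is correct, and since the paper does not prove Proposition \ref{p:bounded} itself but only cites \cite[Proposition 1.6]{Rie:MSA} and \cite[Proposition 2.2]{Rie:MSS}, your argument is essentially a self-contained rendering of that standard route: the identity $\inf_{t \in \rr}\| x - t \cd 1_{\C X}\| = \tfrac{1}{2}\big(\sup_\varphi \varphi(x) - \inf_\psi \psi(x)\big)$ for selfadjoint $x$, plus the reduction of general elements to real and imaginary parts via the $*$-invariance of $L$ (a step genuinely needed here, since the paper's $\rho_L$ is defined by a supremum over the full, not merely selfadjoint, unit ball of $L$, and you handle it correctly). The only quibble is cosmetic: the direction of the state-space identity you need is that states of $\C X$ do not exceed the spectral endpoints, which follows from positivity of $\max\sigma(x)\cd 1 - x$ in $A$ rather than from extending states of $\C X$ to $A$; this does not affect the validity of the argument.
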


We now introduce the kind of finite dimensional approximations which can be applied to characterize compact quantum metric spaces.

\begin{dfn}\label{d:bouapp}
Let $\ep,C > 0$ be constants and let $\C Y \su B$ be an operator system. We say that a pair $(\io,\Phi)$ consisting of unital bounded operators $\io, \Phi : \C X \to \C Y$ is an \emph{$(\ep,C)$-approximation} of $(\C X,L)$ when the following holds:
\begin{enumerate}
\item $\frac{1}{C} \cd \| x \| \leq \| \io(x) \|$ for all $x \in \C X$;
\item the image of $\Phi$ is a finite dimensional subspace of $\C Y$; 
\item $\| \io(x) - \Phi (x) \| \leq \ep \cd L(x)$ for all $x \in \C X$.
\end{enumerate}

We say that an $(\ep,C)$-approximation $(\io,\Phi)$ of $(\C X,L)$ is \emph{positive} when $\Phi$ is a unital positive operator from $\C X$ to $\C Y$. In the case where $\io : \C X \to \C Y$ is a unital isometry we refer to $(\io,\Phi)$ as an \emph{isometric} $\ep$-approximation.
\end{dfn}

Notice that condition $(1)$ from Definition \ref{d:bouapp} implies that the unital bounded map $\io$ induces a linear bounded isomorphism $\io : \C X \to \io(\C X)$ and that the inverse is also bounded with operator norm dominated by the constant $C > 0$.

Let us discuss the situation where we start out with a compact metric space $(M,\rho)$ and a unital $C^*$-algebra $B$. Consider the unital $C^*$-algebra $C(M,B)$ of continuous maps from $M$ to $B$ and let $\T{Lip}(M,B) \su C(M,B)$ denote the unital $*$-subalgebra of Lipschitz maps. We regard $\T{Lip}(M,B) \su C(M,B)$ as an operator system and define the lower semicontinuous Lipschitz seminorm $L_\rho : \T{Lip}(M,B) \to [0,\infty)$ by
\[
L_\rho(f) := \sup\big\{ \frac{\|f(p) - f(q)\|}{\rho(p,q)} \mid p \neq q \big\}.
\]
Remark that $L_\rho(f)$ is simply the Lipschitz constant associated to a Lipschitz map $f : M \to B$. For $B = \cc$ it holds that $\big( \T{Lip}(M,\cc),L_\rho \big)$ is a compact quantum metric space and the corresponding Monge-Kantorovich metric on the state space $S\big(\T{Lip}(M,\cc) \big)$ recovers the metric on $M$, see e.g. the discussion in the introduction to \cite{Rie:MSS}. We emphasize that the pair $\big( \T{Lip}(M,B), L_\rho \big)$ is \emph{not} a compact quantum metric space for $B \neq \cc$ since the kernel of $L_\rho$ can be identified with $B$. 

The identity operator on $\T{Lip}(M,B)$ is denoted by $\T{id} : \T{Lip}(M,B) \to \T{Lip}(M,B)$.


\begin{lemma}\label{l:metapp}
Let $(M,\rho)$ be a compact metric space and let $B$ be a finite dimensional unital $C^*$-algebra. For every $\ep > 0$ there exists a positive isometric $\ep$-approximation $(\T{id}, \Phi_\ep)$ of $\big(\T{Lip}(M,B),L_\rho \big)$.
\end{lemma}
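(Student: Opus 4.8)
The plan is to take $\io = \T{id}$, which is automatically a unital isometry, so that the conditions on $\io$ in Definition \ref{d:bouapp} hold with $C = 1$ and the whole problem collapses to constructing a single unital positive finite-rank operator $\Phi_\ep : \T{Lip}(M,B) \to \T{Lip}(M,B)$ satisfying $\| f - \Phi_\ep(f) \| \leq \ep \cd L_\rho(f)$ for all $f$. The natural candidate is a partition-of-unity averaging operator: choose finitely many points $p_1, \dots, p_N \in M$, a subordinate Lipschitz partition of unity $\chi_1, \dots, \chi_N$, and set $\Phi_\ep(f) := \sum_{i=1}^N \chi_i \cd f(p_i)$, the sum of the scalar Lipschitz functions $\chi_i$ weighted by the constants $f(p_i) \in B$.

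First I would use compactness of $(M,\rho)$ to extract from the open cover $\{ \B B_\ep(p) \}_{p \in M}$ a finite subcover $\B B_\ep(p_1), \dots, \B B_\ep(p_N)$. Then I would build the partition of unity explicitly in the metric setting: put $g_i(p) := \max\{ 0, \ep - \rho(p,p_i) \}$, which is a nonnegative $1$-Lipschitz function with $g_i(p) > 0$ precisely when $p \in \B B_\ep(p_i)$, and normalise by $\chi_i := g_i / \sum_{j=1}^N g_j$. Since the balls cover $M$, the denominator is continuous and strictly positive on the compact space $M$, hence bounded below by a positive constant, so each $\chi_i$ is again Lipschitz; moreover $\sum_i \chi_i = 1$, each $\chi_i \geq 0$, and $\chi_i(p) \neq 0$ forces $\rho(p,p_i) < \ep$.

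With $\Phi_\ep$ defined in this way I would verify the structural requirements. Its image lies in the finite dimensional space spanned by $\{ \chi_i \cd b \mid 1 \leq i \leq N, \ b \in B \}$, giving condition $(2)$, and since each $\chi_i$ is Lipschitz and $B$ is finite dimensional, $\Phi_\ep$ genuinely maps $\T{Lip}(M,B)$ into itself. Unitality follows from $\Phi_\ep(1) = \big( \sum_i \chi_i \big) \cd 1_B = 1_B$, positivity from $\chi_i \geq 0$ together with $f(p_i) \geq 0$ whenever $f \geq 0$, and the contractive estimate $\| \Phi_\ep(f)(p) \| \leq \sum_i \chi_i(p) \| f(p_i) \| \leq \| f \|$ from the triangle inequality and $\sum_i \chi_i(p) = 1$.

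The heart of the argument is the accuracy estimate. Using $\sum_i \chi_i(p) = 1$ I would write, for each $p \in M$,
\[
f(p) - \Phi_\ep(f)(p) = \sum_{i=1}^N \chi_i(p) \big( f(p) - f(p_i) \big),
\]
and then bound
\[
\| f(p) - \Phi_\ep(f)(p) \| \leq \sum_{i=1}^N \chi_i(p) \| f(p) - f(p_i) \| \leq L_\rho(f) \sum_{i=1}^N \chi_i(p) \, \rho(p,p_i) \leq \ep \cd L_\rho(f),
\]
where the final step uses that $\chi_i(p) \neq 0$ implies $\rho(p,p_i) < \ep$. Taking the supremum over $p$ yields condition $(3)$. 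I expect the only genuinely delicate point to be ensuring that the partition of unity is Lipschitz rather than merely continuous, so that $\Phi_\ep$ really takes values in $\T{Lip}(M,B)$; this is exactly what the explicit formula for $\chi_i$ and the positive lower bound on the normalising denominator are designed to secure.
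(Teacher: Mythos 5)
Your proposal is correct and follows essentially the same route as the paper: a finite $\ep$-net, a subordinate Lipschitz partition of unity, and the averaging operator $\Phi_\ep(f) = \sum_j f(p_j)\cdot \phi_j$ with the identical accuracy estimate. The only difference is that you spell out the explicit construction $g_i(p) = \max\{0,\ep - \rho(p,p_i)\}$ where the paper merely asserts that the partition of unity ``may be arranged'' to be Lipschitz; this is a welcome clarification but not a different argument.
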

\begin{proof}
Let $\ep > 0$ be given. Use the compactness of the metric space $M$ to choose finitely many points $p_1,p_2,\ldots,p_n \in M$ such that
\[
M = \cup_{j = 1}^n \B B_\ep(p_j) .
\]
Afterwards, choose a partition of unity $\{ \phi_j \}_{j = 1}^n$ such that $\phi_j(p) = 0$ for all $p \notin \B B_{\ep}(p_j)$. We may arrange that all the functions in our partition of unity are Lipschitz functions. Define the unital positive operator
\[
\Phi_\ep : \T{Lip}(M,B) \to \T{Lip}(M,B) \q \Phi_\ep(f) := \sum_{j = 1}^n f(p_j) \cd \phi_j .
\]
The identity map is clearly a unital isometry and the image of $\Phi_\ep$ is finite dimensional so we focus on proving condition $(3)$ from Definition \ref{d:bouapp}. Let $p \in M$ and $f \in \T{Lip}(M,B)$ be given. The relevant estimate follows from the inequalities:
\[
\begin{split}
\big\| f(p) - \Phi_\ep(f)(p) \big\| & \leq \sum_{j = 1}^n \big\| f(p) - f(p_j) \big\| \cd \phi_j(p) 
\leq \sum_{j = 1}^n \rho(p,p_j) \cd L_\rho(f) \cd \phi_j(p) \\
& \leq \ep \cd L_\rho(f) \cd \sum_{j = 1}^n \phi_j(p) = \ep \cd L_\rho(f) . \qedhere
\end{split}
\]
\end{proof}

The main result of this section, which provides a characterization of compact quantum metric spaces in terms of finite dimensional approximations, can now be stated and proved.

\begin{thm}\label{t:charcqms}
Let $\C X \su A$ be an operator system and let $L : \C X \to [0,\infty)$ be a Lipschitz seminorm. The following conditions are equivalent:
\begin{enumerate}
\item $(\C X,L)$ is a compact quantum metric space;
\item $(\C X,L)$ has finite diameter and for every $\ep > 0$ there exists a positive isometric $\ep$-approximation of $(\C X,L)$.
\item $(\C X,L)$ has finite diameter and there exists a constant $C > 0$ such that for every $\ep > 0$ there exists an $(\ep,C)$-approximation of $(\C X,L)$.
\end{enumerate}
\end{thm}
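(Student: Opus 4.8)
The plan is to prove the cycle $(1) \Rightarrow (2) \Rightarrow (3) \Rightarrow (1)$. The implication $(2) \Rightarrow (3)$ is immediate, since a positive isometric $\ep$-approximation $(\io,\Phi)$ is in particular an $(\ep,1)$-approximation, so the uniform constant $C = 1$ works. Before the other two implications I would record that finite diameter is available for free on both ends: if $(\C X,L)$ is a compact quantum metric space, then $S(\C X) = \T{UCP}_1(\C X)$ is $\rho_L$-compact, hence of finite $\rho_L$-diameter, so $(\C X,L)$ has finite diameter by Proposition \ref{p:bounded}; and finite diameter is explicitly part of the hypothesis in $(3)$.

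For $(1) \Rightarrow (2)$, the constructive heart of the argument, the idea is to model $\C X$ on a genuine compact metric space and then borrow the approximation produced in Lemma \ref{l:metapp}. Fix $n = 2$ and consider the unital isometry
\[
\io : \C X \to \C Y := C\big( \T{UCP}_2(\C X), M_2(\cc) \big), \q \io(x)(\varphi) = \varphi(x),
\]
which is isometric precisely because $n \geq 2$. By Proposition \ref{p:metmatsta} the metric $\rho_L$ metrizes the (compact) point-norm topology on $K := \T{UCP}_2(\C X)$, and finite diameter forces $\rho_L$ to be bounded, so $(K,\rho_L)$ is a compact metric space. The decisive computation is that each $\io(x)$ is $\rho_L$-Lipschitz with constant at most $L(x)$: since $\varphi,\psi$ are unital the scalars cancel, and rescaling $x$ by $L(x)$ in the definition of $\rho_L$ gives $\|\io(x)(\varphi) - \io(x)(\psi)\| = \|\varphi(x) - \psi(x)\| \leq L(x) \cd \rho_L(\varphi,\psi)$ (the left side vanishing when $L(x) = 0$). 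Hence $L_{\rho_L}(\io(x)) \leq L(x)$. I would then apply Lemma \ref{l:metapp} to $(K,\rho_L)$ and the finite dimensional $C^*$-algebra $B = M_2(\cc)$ to obtain, for each $\ep > 0$, a unital positive finite-rank map $\Phi_\ep$ with $\|f(p) - \Phi_\ep(f)(p)\| \leq \ep \cd L_{\rho_L}(f)$, and set $\Phi := \Phi_\ep \circ \io$. This $\Phi$ is unital and positive (both factors are), has finite dimensional image, and satisfies $\|\io(x) - \Phi(x)\| \leq \ep \cd L_{\rho_L}(\io(x)) \leq \ep \cd L(x)$, so $(\io,\Phi)$ is exactly a positive isometric $\ep$-approximation.

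For $(3) \Rightarrow (1)$ the strategy is to upgrade the approximations to total boundedness of $[\ov{\B B}_1(0,L)]$ in $\C X/\cc$ and then invoke Theorem \ref{t:totbou}. Given an $(\ep,C)$-approximation $(\io,\Phi)$, both maps are unital and hence descend to bounded maps $\bar\io, \bar\Phi : \C X/\cc \to \C Y/\cc$; condition $(1)$ of Definition \ref{d:bouapp} passes to the quotient as $\tfrac1C \| [x] \|_{\C X/\cc} \leq \| \bar\io([x]) \|$, so $\bar\io$ is a linear isomorphism onto its image, bounded above and below, and total boundedness of $[\ov{\B B}_1(0,L)]$ is equivalent to that of $\bar\io\big([\ov{\B B}_1(0,L)]\big)$. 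For $x \in \ov{\B B}_1(0,L)$ condition $(3)$ gives $\|\bar\io([x]) - \bar\Phi([x])\| \leq \ep$, while finite diameter bounds $\|\bar\io([x])\|$, and hence $\|\bar\Phi([x])\|$, uniformly. Thus $\bar\Phi\big([\ov{\B B}_1(0,L)]\big)$ is a bounded subset of the finite dimensional space $\bar\Phi(\C X/\cc)$, hence totally bounded, and every point of $\bar\io\big([\ov{\B B}_1(0,L)]\big)$ lies within $\ep$ of it, so the latter admits a finite $2\ep$-net. As $\ep > 0$ is arbitrary, $\bar\io\big([\ov{\B B}_1(0,L)]\big)$ is totally bounded, whence so is $[\ov{\B B}_1(0,L)]$, and Theorem \ref{t:totbou} concludes.

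The main obstacle is the construction in $(1) \Rightarrow (2)$: the non-obvious move is to use the matrix state space $\T{UCP}_2(\C X)$, rather than the ordinary state space, as the modelling compact metric space, since only for $n \geq 2$ is the evaluation map $\io$ genuinely isometric on all of $\C X$ rather than merely on selfadjoint elements. Once the estimate $L_{\rho_L}(\io(x)) \leq L(x)$ is secured, the commutative approximation of Lemma \ref{l:metapp} transfers the problem wholesale. The remaining delicate point is the quotient bookkeeping in $(3) \Rightarrow (1)$, where one must verify that the finite dimensional approximant, together with finite diameter, really does force total boundedness after descending to $\C X/\cc$.
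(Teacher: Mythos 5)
Your proposal is correct and follows essentially the same route as the paper: $(1)\Rightarrow(2)$ via the evaluation isometry into $C\big(\T{UCP}_2(\C X),M_2(\cc)\big)$, the Lipschitz estimate $L_{\rho_L}(\io(x))\leq L(x)$ and Lemma \ref{l:metapp}; and $(3)\Rightarrow(1)$ via Theorem \ref{t:totbou}, with your quotient-level bookkeeping being a cosmetic repackaging of the paper's explicit net-and-representative estimates. One phrasing slip: since the approximation $(\io,\Phi)$ depends on $\ep$, the sentence ``as $\ep>0$ is arbitrary, $\bar\io\big([\ov{\B B}_1(0,L)]\big)$ is totally bounded'' is not literally meaningful for a single $\io$; the correct (and intended) conclusion is that each finite $2\ep$-net for $\bar\io_\ep\big([\ov{\B B}_1(0,L)]\big)$ pulls back through $\bar\io_\ep^{-1}$, whose norm is bounded by the $\ep$-independent constant $C$, to a finite $2C\ep$-net for $[\ov{\B B}_1(0,L)]$ itself.
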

\begin{proof}
The implication $(2) \rar (3)$ is clear so we focus on proving that $(1) \rar (2)$ and $(3) \rar (1)$.

Suppose that $(1)$ holds. Since $\big(S(\C X),\rho_L\big)$ is a compact metric space it does in particular have finite diameter and it therefore follows from Proposition \ref{p:bounded} that $(\C X,L)$ has finite diameter as well. We are going to construct a positive isometric $\ep$-approximation of $(\C X,L)$ for every $\ep > 0$. So let $\ep > 0$ be given.

We start out by noting that Proposition \ref{p:metmatsta} shows that $\big( \T{UCP}_2(\C X), \rho_L\big)$ is a compact metric space. Applying Lemma \ref{l:metapp} we may choose a positive isometric $\ep$-approximation $(\T{id},\Phi_\ep)$ of the pair $\Big(\T{Lip}\big(\T{UCP}_2(\C X), M_2(\cc)\big), L_{\rho_L} \Big)$. Recall here that the Lipschitz maps from $\T{UCP}_2(\C X)$ to $M_2(\cc)$ are considered as an operator system inside the unital $C^*$-algebra of continuous maps from $\T{UCP}_2(\C X)$ to $M_2(\cc)$.

Let us now consider the unital linear map 
\[
\io : \C X \to C\big(\T{UCP}_2(\C X), M_2(\cc)\big) \q \io(x)(\varphi) = \varphi(x)
\]
which we know is positive and isometric. For every $x \in \C X$ we record that $\io(x)$ is in fact a Lipschitz map and that $L_{\rho_L}(\io(x)) \leq L(x)$. This follows since
\[
\| \io(x)(\varphi) - \io(x)(\psi) \| = \| \varphi(x) - \psi(x) \| \leq \rho_L(\varphi,\psi) \cd L(x)
\]
for all unital completely positive maps $\varphi,\psi : \C X \to M_2(\cc)$, see also \cite[Lemma 3.2 and Theorem 4.1]{Rie:MSS}. In particular, we get that $\io$ factorizes through $\T{Lip}\big(\T{UCP}_2(\C X), M_2(\cc)\big)$ yielding a unital positive isometry $\io : \C X \to \T{Lip}\big( \T{UCP}_2(\C X), M_2(\cc)\big)$.

We claim that $(\io,\Phi_\ep \io)$ is a positive isometric $\ep$-approximation of $(\C X,L)$. We only verify condition $(3)$ from Definition \ref{d:bouapp} since the remaining claims follow immediately from the constructions. Let thus $x \in \C X$ be given. Using that $(\T{id},\Phi_\ep)$ is a positive isometric $\ep$-approximation of $\big(\T{Lip}\big(\T{UCP}_2(\C X), M_2(\cc)\big), L_{\rho_L} \big)$ together with the inequality $L_{\rho_L}(\io(x)) \leq L(x)$, we obtain that
\[
\big\| \io(x) - (\Phi_\ep \io)(x) \big\| \leq \ep \cd L_{\rho_L}\big(\io(x)\big) \leq \ep \cd L(x) .
\]
This ends the proof of the implication $(1) \rar (2)$.
\medskip

Suppose that $(3)$ holds and choose the corresponding constant $C > 0$ so that there exists an $(\ep,C)$-approximation of $(\C X,L)$ for all $\ep > 0$. By enlarging $C > 0$ if necessary, we may at the same time arrange that 
\begin{equation}\label{eq:profin}
\| [x] \|_{\C X/\cc} \leq C \cd L(x) \q \T{for all } x \in \C X .
\end{equation}
By Theorem \ref{t:totbou} it suffices to show that the image $\big[ \ov{\B B}_1(0,L)\big] \su \C X / \cc$ is totally bounded. 

Let $\ep > 0$ be given. We need to find finitely many points $x_1,x_2,\ldots,x_n$ inside the seminorm closed unit ball $\ov{\B B}_1(0,L)$ such that 
\begin{equation}\label{eq:proinc}
\big[ \ov{\B B}_1(0,L)\big] \su \bigcup_{j = 1}^n \B B_\ep\big(  [x_j] \big) .
\end{equation}
Choose an $\big(\frac{\ep}{4 C},C\big)$-approximation $(\io,\Phi)$ of $(\C X,L)$. Since both $\io : \C X \to \C Y$ and $\Phi : \C X \to \C Y$ are unital and bounded they descend to bounded operators $[\io]$ and $[\Phi] : \C X / \cc \to \C Y / \cc$. The corresponding operator norms are denoted by $\| [\Phi] \|_\infty$ and $\| [\io] \|_\infty$, respectively. We let $F \su \C Y / \cc$ denote the image of $[\Phi] : \C X / \cc \to \C Y / \cc$ so that $F$ is finite dimensional by assumption. For every $x \in \ov{\B B}_1(0,L)$ we obtain from \eqref{eq:profin} that
\[
\big\| [\Phi] [x] \big\|_{\C Y/ \cc} \leq \big\| [\Phi] \big\|_\infty \cd \big\| [x] \big\|_{\C X / \cc} \leq C \cd \big\| [\Phi] \big\|_\infty .
\]
This shows that the image $[\Phi] \big[ \ov{\B B}_1(0,L)\big]$ is a bounded subset of $F$. Since $F$ is finite dimensional we get that $[\Phi] \big[ \ov{\B B}_1(0,L)\big] \su F$ is in fact totally bounded and we may choose $x_1,x_2,\ldots,x_n \in \ov{\B B}_1(0,L)$ such that
\[
[\Phi] \big[ \ov{\B B}_1(0,L)\big] \su \bigcup_{j = 1}^n \B B_{\frac{\ep}{2C}}\big( [\Phi(x_j)] \big) .
\]
Let $x \in \ov{\B B}_1(0,L)$ be given. Choose a $j \in \{1,2,\ldots,n\}$ such that $\big\| [\Phi(x - x_j)] \big\|_{\C Y/ \cc} < \frac{\ep}{2C}$. To ease the notation, put $z_j := x - x_j$. Let us moreover choose a $\la \in \cc$ such that $\| \Phi(z_j) - \la \| < \frac{\ep}{2C}$. Since $(\io,\Phi)$ is an $\big(\frac{\ep}{4C},C\big)$-approximation of $(\C X,L)$ we then have that
\[
\begin{split}
\big\| [x - x_j] \big\|_{\C X/\cc} & \leq \| z_j - \la \| \leq C \cd \big\| \io(z_j - \la) \big\| \\
& \leq C \cd \big\| \io(z_j) - \Phi(z_j) \big\| + C \cd \big\| \Phi(z_j) - \la  \big\| \\
& < \frac{\ep}{4} \cd L(z_j) + \frac{\ep}{2} \leq \frac{\ep}{4} \cd L(x) + \frac{\ep}{4} \cd L(x_j) + \frac{\ep}{2} \leq \ep .
\end{split}
\]
This proves the inclusion in \eqref{eq:proinc} and hence that $(3) \rar (1)$.
\end{proof}

We end this section by presenting two small results regarding the existence of finite dimensional approximations and the finite diameter condition.

\begin{lemma}\label{l:findia}
Let $L : \C X \to [0,\infty)$ be a Lipschitz seminorm. Suppose that $(\io,\Phi)$ is an $(\ep,C)$-approximation of $(\C X,L)$ for some constants $\ep,C > 0$ and suppose that there exists a norm $\vertiii{\cd} : \Phi(\C X)/\cc \to [0,\infty)$ and a constant $D \geq 0$ such that
\[
\vertiii{ [\Phi(x)]} \leq D \cd L(x) \q \mbox{for all } x \in \C X .
\]
Then $(\C X,L)$ has finite diameter. 
\end{lemma}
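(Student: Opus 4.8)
The plan is to exploit the finite dimensionality of $\Phi(\C X)$ to compare the given abstract norm $\vertiii{\cd}$ with the concrete quotient $C^*$-norm, and then to transport the resulting bound on $L$ back through the approximation to the quotient norm on $\C X/\cc$, thereby producing the constant required in Definition~\ref{d:bounded}.

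First I would note that, since $\Phi$ is unital, the scalars $\cc$ sit inside the finite dimensional subspace $\Phi(\C X) \su \C Y$, so that $\Phi(\C X)/\cc$ is a finite dimensional subspace of $\C Y/\cc$ and the restriction of the quotient $C^*$-norm $\| \cd \|_{\C Y/\cc}$ to $\Phi(\C X)/\cc$ is again a genuine norm (using $\cc \cap \Phi(\C X) = \cc$). Because all norms on a finite dimensional vector space are equivalent, there exists a constant $E \geq 0$ with $\| [\Phi(x)] \|_{\C Y/\cc} \leq E \cd \vertiii{[\Phi(x)]}$ for every $x \in \C X$. Combining this with the hypothesis $\vertiii{[\Phi(x)]} \leq D \cd L(x)$ yields the key estimate $\| [\Phi(x)] \|_{\C Y/\cc} \leq E D \cd L(x)$.

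Next I would invoke condition $(1)$ of Definition~\ref{d:bouapp} together with the unitality of $\io$. For any scalar $\la \in \cc$ we have $\io(x - \la) = \io(x) - \la$, so $\| x - \la \| \leq C \cd \| \io(x) - \la \|$; taking infima over $\la$ gives $\| [x] \|_{\C X/\cc} \leq C \cd \| [\io(x)] \|_{\C Y/\cc}$. It then remains to control the quotient norm of $\io(x)$, and here the triangle inequality in $\C Y/\cc$, the elementary bound $\| [\io(x) - \Phi(x)] \|_{\C Y/\cc} \leq \| \io(x) - \Phi(x) \|$ (choosing $\la = 0$), and condition $(3)$ of Definition~\ref{d:bouapp} give $\| [\io(x)] \|_{\C Y/\cc} \leq \| \io(x) - \Phi(x) \| + \| [\Phi(x)] \|_{\C Y/\cc} \leq (\ep + E D) \cd L(x)$. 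Assembling these pieces produces $\| [x] \|_{\C X/\cc} \leq C(\ep + E D) \cd L(x)$, which is exactly the finite diameter condition with constant $C(\ep + ED)$.

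I do not expect a serious obstacle: the argument is a direct chain of estimates. The only point requiring genuine care is the first step, namely verifying that $\| \cd \|_{\C Y/\cc}$ restricted to $\Phi(\C X)/\cc$ is indeed a norm (so that the finite dimensional equivalence of norms applies) and that the value $\| [\Phi(x)] \|_{\C Y/\cc}$ computed in $\C Y/\cc$ coincides with the value of this restricted norm on the class of $\Phi(x)$ in $\Phi(\C X)/\cc$; both of these follow from $\cc \cap \Phi(\C X) = \cc$, which is guaranteed by the unitality of $\Phi$.
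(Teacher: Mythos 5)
Your argument is correct and is essentially the paper's own proof: both use finite dimensionality to compare $\vertiii{\cd}$ with the quotient $C^*$-norm on $\Phi(\C X)/\cc$ and then chain the estimate $\| [x] \|_{\C X/\cc} \leq C\big(\| \io(x) - \Phi(x)\| + \| [\Phi(x)]\|\big) \leq C(\ep + ED)\cd L(x)$. The only difference is bookkeeping — you pass to quotient norms and use the triangle inequality there, while the paper picks a near-optimal scalar $\la$ and runs the same estimate in the $C^*$-norm with a $\de$ of slack — and the resulting constant is identical.
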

\begin{proof}
Since the image of $\Phi : \C X \to \C Y$ is finite dimensional, we know that the quotient space $\Phi(\C X)/\cc$ is finite dimensional as well. The norm $\vertiii{ \cd }$ is therefore equivalent to the quotient norm $\| \cd \|_{\Phi(\C X)/\cc}$ and we may choose a constant $E > 0$ such that $\| [y] \|_{\Phi(\C X)/\cc} \leq E \cd \vertiii{ [y] }$ for all $y \in \Phi(\C X)$. Let now $x \in \C X$ and $\de > 0$ be given. Choose a $\la \in \cc$ such that $\|\Phi(x) - \la \| \leq \| [\Phi(x)] \|_{\Phi(\C X)/\cc} + \de$ and notice that we have the estimates
\[
\begin{split}
\| [x] \|_{\C X/\cc} & \leq \| x - \la \| \leq C \cd \| \io(x)  - \la \| \leq C \cd \| \io(x) - \Phi(x) \| + C \cd \| \Phi(x) - \la \| \\
& \leq C \cd \ep \cd L(x) + C \cd \| [\Phi(x)] \|_{\Phi(\C X)/\cc} + C \cd \de \\
& \leq  C \cd \ep \cd L(x) + C \cd E \cd \vertiii{ [\Phi(x)] } + C \cd \de 
\leq C \cd ( \ep + D \cd E) \cd L(x)  + C \cd \de.
\end{split}
\]
Since $\de > 0$ was arbitrary we get that $\| [x] \|_{\C X/\cc} \leq C \cd ( \ep + D \cd E) \cd L(x)$. This proves the lemma.
\end{proof}

\begin{prop}
Let $L : \C X \to [0,\infty)$ be a Lipschitz seminorm with $\T{ker}(L) = \cc$. Suppose that $(\io,\Phi)$ is an $(\ep,C)$-approximation of $(\C X,L)$ for some constants $\ep,C > 0$. Suppose moreover that the codomain of both $\io$ and $\Phi$ is equal to $\C X$ and that $\Phi : \C X \to \C X$ is bounded with respect to $L$. Then $(\C X,L)$ has finite diameter.
\end{prop}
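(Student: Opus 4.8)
The plan is to deduce the conclusion directly from Lemma~\ref{l:findia}. That lemma already reduces the finite diameter property to the existence of a norm $\vertiii{\cd}$ on the finite dimensional quotient $\Phi(\C X)/\cc$ satisfying $\vertiii{[\Phi(x)]} \leq D \cd L(x)$ for all $x \in \C X$ and some constant $D \geq 0$. Since the present hypotheses already include that $(\io,\Phi)$ is an $(\ep,C)$-approximation of $(\C X,L)$, the only remaining task is to produce such a norm together with the required estimate and then invoke the lemma.

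First I would observe that the restriction of $L$ itself to the subspace $\Phi(\C X) \su \C X$ is the natural candidate. Because $\Phi$ is unital we have $\cc \su \Phi(\C X)$, so $L$ restricts to a seminorm on $\Phi(\C X)$ and, as it vanishes on scalars, descends to a well-defined seminorm $\vertiii{[y]} := L(y)$ on the quotient $\Phi(\C X)/\cc$. The crucial point --- and the step I expect to require the most care --- is to verify that this descended seminorm is in fact a genuine norm rather than merely a seminorm. This is exactly where the standing hypothesis $\T{ker}(L) = \cc$ enters: the kernel of $L$ restricted to $\Phi(\C X)$ equals $\Phi(\C X) \cap \T{ker}(L) = \Phi(\C X) \cap \cc = \cc$, so $\vertiii{[y]} = 0$ forces $[y] = 0$ in $\Phi(\C X)/\cc$. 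Without the assumption $\T{ker}(L) = \cc$ one would only obtain a seminorm and Lemma~\ref{l:findia} could not be applied.

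Finally I would unwind the hypothesis that $\Phi : \C X \to \C X$ is bounded with respect to $L$, which I read as furnishing a constant $M \geq 0$ with $L(\Phi(x)) \leq M \cd L(x)$ for all $x \in \C X$. Combining this with the definition of $\vertiii{\cd}$ gives
\[
\vertiii{[\Phi(x)]} = L(\Phi(x)) \leq M \cd L(x) \q \mbox{for all } x \in \C X,
\]
which is precisely the estimate demanded by Lemma~\ref{l:findia} with $D = M$. Applying that lemma then yields that $(\C X,L)$ has finite diameter, completing the proof. The argument is short because all the analytic work has been packaged into Lemma~\ref{l:findia}; the only genuine content here is the recognition that $\T{ker}(L) = \cc$ upgrades the restricted seminorm on the finite dimensional space $\Phi(\C X)/\cc$ to a norm.
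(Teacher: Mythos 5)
Your proposal is correct and follows essentially the same route as the paper: both reduce to Lemma~\ref{l:findia} by descending $L$ to a norm on $\Phi(\C X)/\cc$ and using $L$-boundedness of $\Phi$ for the estimate $\vertiii{[\Phi(x)]} \leq D \cd L(x)$. The only cosmetic difference is that the paper writes the descended norm as $\vertiii{[z]} = L\big(z - \varphi(z)\cd 1_{\C X}\big)$ for a chosen state $\varphi$, which coincides with your $L(z)$ since $L$ vanishes on scalars; your identification of $\T{ker}(L) = \cc$ as the point that upgrades the seminorm to a norm is exactly right.
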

\begin{proof}
  Since $\Phi : \C X \to \C X$ is bounded with respect to $L$ we know that there exists a constant $D \geq 0$ such that $L(\Phi(x)) \leq D \cd L(x)$ for all $x \in \C X$. Choose a state $\varphi : \C X \to \cc$. Using that $\T{ker}(L) = \cc$ we define a norm
  \[
\vertiii{\cd} : \Phi(\C X)/\cc \to [0,\infty)
  \]
  by putting $\vertiii{[z]} := L\big( z - \varphi(z) \cd 1_{\C X} \big)$ for all $z \in \Phi(\C X)$. The result of the proposition now follows from Lemma \ref{l:findia} by noting that
  \[
  \vertiii{ [\Phi(x)]} = L\big( \Phi(x) - \varphi(\Phi(x)) \cd 1_{\C X} \big) \leq L\big( \Phi(x) \big) \leq D \cd L(x)
  \]
  for all $x \in \C X$. 
\end{proof}

\section{Operator seminorms and minimal tensor products}
We are in this text interested in the compatibility between the theory of compact quantum metric spaces and minimal tensor products of operator systems. We first study seminorms which take into account all tensor products with finite scalar matrices. We shall then see that these seminorms are compatible with minimal tensor products in full generality. The idea is to consider operator systems which are equipped with seminorms satisfying properties analoguous to the $L^\infty$-matrix norms which characterize an abstract operator space, see \cite{Rua:SCA,EfRu:ACO}. This kind of seminorms are called operator seminorms following existing terminology for operator spaces, see \cite[\S 1.2.16]{BlMe:OMO}. 

\subsection{Operator seminorms}\label{ss:matsem}
Let us fix an operator system $\C X \su A$.

For each $n \in \nn$ we recall from Subsection \ref{ss:opsys} that the $(n \ti n)$-matrices $M_n(\C X) \su M_n(A)$ is an operator system in its own right. Notice also that $M_n(\C X)$ is a bimodule over the scalar matrices $M_n(\cc)$ where the bimodule structure is given by
\begin{equation}\label{eq:module}
(v \cd x \cd w)_{ij} := \sum_{k,l = 1}^n v_{ik} \cd x_{kl} \cd w_{lj} \q i,j \in \{1,2,\ldots,n\}
\end{equation}
whenever $v,w \in M_n(\cc)$ and $x \in M_n(\C X)$. The subscript notation applied here refers to the entries of the corresponding $(n \ti n)$-matrix.

For each $n,m \in \nn$ and matrices $x \in M_n(\C X)$ and $y \in M_m(\C X)$ we form the block diagonal direct sum $x \op y \in M_{n + m}(\C X)$ with $x$ in the upper left corner and $y$ in the lower right corner.

\begin{dfn}\label{d:opesemi}
An \emph{operator seminorm} $\B L$ on $\C X$ consists of a seminorm $\B L_s : M_s(\C X) \to [0,\infty)$ for every $s \in \nn$ such that the following holds:
\begin{enumerate}
\item $\B L_{s+r}(x \op y) = \max\{ \B L_s(x), \B L_r(y) \}$ for all $s,r \in \nn$ and $x \in M_s(\C X)$ and $y \in M_r(\C X)$;
\item $\B L_s( v \cd x \cd w) \leq \| v \| \cd \B L_s(x) \cd \| w \|$ for all $s \in \nn$ and $x \in M_s(\C X)$ and $v,w \in M_s(\B C)$.
\end{enumerate}
We say that an operator seminorm $\B L$ is \emph{lower semicontinuous} when the seminorm $\B L_s : M_s(\C X) \to [0,\infty)$ is lower semicontinuous for all $s \in \nn$. 
\end{dfn}

Let us from now on fix an operator seminorm $\B L = \{\B L_s\}_{s = 1}^\infty$ on the operator system $\C X$. Consider a matrix $x \in M_s(\C X)$ for some $s \in \nn$. From the proof of \cite[Proposition 2.1]{Rua:SCA} we obtain the inequalities
\begin{equation}\label{eq:semientry}
\B L_s(x) \leq \sum_{i,j = 1}^s \B L_1(x_{ij}) \, \, \T{ and } \, \, \, \B L_1(x_{kl}) \leq \B L_s(x)
\end{equation}
for all $k,l \in \{1,2,\ldots,s\}$. In particular, we get the following result regarding kernels:

\begin{lemma}\label{l:matker}
For each $s \in \nn$ we have the identity $\T{ker}(\B L_s) = M_s\big( \T{ker}(\B L_1) \big)$.
\end{lemma}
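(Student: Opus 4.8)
The plan is to read off both inclusions directly from the two entrywise estimates recorded in \eqref{eq:semientry}, so the argument reduces to matching the right inequality to the right inclusion.

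First I would establish $\T{ker}(\B L_s) \su M_s\big(\T{ker}(\B L_1)\big)$. Given $x \in M_s(\C X)$ with $\B L_s(x) = 0$, the second inequality in \eqref{eq:semientry} gives $\B L_1(x_{kl}) \leq \B L_s(x) = 0$ for every pair of indices $k,l \in \{1,2,\ldots,s\}$. Hence each entry $x_{kl}$ lies in $\T{ker}(\B L_1)$, which is exactly the statement that $x \in M_s\big(\T{ker}(\B L_1)\big)$.

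For the reverse inclusion $M_s\big(\T{ker}(\B L_1)\big) \su \T{ker}(\B L_s)$, I would take $x \in M_s(\C X)$ all of whose entries satisfy $x_{ij} \in \T{ker}(\B L_1)$, i.e.\ $\B L_1(x_{ij}) = 0$ for every $i,j$. The first inequality in \eqref{eq:semientry} then yields $\B L_s(x) \leq \sum_{i,j = 1}^s \B L_1(x_{ij}) = 0$, so $\B L_s(x) = 0$ and therefore $x \in \T{ker}(\B L_s)$. Combining the two inclusions gives the claimed identity.

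I do not expect a genuine obstacle here: the entire content of the lemma is already packaged into the sandwiching estimates \eqref{eq:semientry}, which themselves rest on the module-contractivity axiom applied to the matrix units that embed and project the entries. In particular, the block-diagonal axiom plays no role in this statement, so the only care needed is to keep track of which of the two inequalities supplies which inclusion.
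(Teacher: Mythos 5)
Your argument is correct and is exactly the paper's intended proof: the lemma is stated as an immediate consequence of the two entrywise inequalities in \eqref{eq:semientry}, with the second inequality giving $\T{ker}(\B L_s) \su M_s\big(\T{ker}(\B L_1)\big)$ and the first giving the reverse inclusion, just as you describe.
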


The operator version of Lipschitz seminorms can now be introduced:

\begin{dfn}\label{d:lipmatsem}
We say that the operator seminorm $\B L = \{ \B L_s\}_{s = 1}^\infty$ is a \emph{Lipschitz operator seminorm} when the following conditions hold:
\begin{enumerate}
\item $\B L$ is \emph{$*$-invariant}, meaning that $\B L_s(x^*) = \B L_s(x)$ for all $s \in \nn$ and all $x \in M_s(\C X)$;
\item $\B L$ \emph{vanishes on scalar matrices}, meaning that $\B L_s(v) = 0$ for all $s \in \nn$ and all $v \in M_s(\cc \cd 1_{\C X})$. 
\end{enumerate}
\end{dfn}

As a consequence of Lemma \ref{l:matker} we get that the operator seminorm $\B L$ is a Lipschitz operator seminorm if and only if $\B L$ is $*$-invariant and $\B L_1(1_{\C X}) = 0$. 
\medskip

Let us fix an $n \in \nn$. We shall now see how to stabilize the operator seminorm $\B L$ by $(n \ti n)$-matrices and thereby obtain a new operator seminorm $\B L^{M_{n}(\C X)}$ on the operator system $M_{n}(\C X) \su M_{n}(A)$. 

For each $s \in \nn$ we start out by considering the $*$-isomorphism of unital $C^*$-algebras $I_s : M_s\big( M_{n}(A) \big) \to M_{s \cd n}(A)$ which forgets the subdivisions. More precisely, for each $i,j \in \{1,2,\ldots,s\}$ and $k,l \in \{1,2,\ldots,n\}$ we have that
\[
I_s( x )_{ (i-1) n + k, (j-1)n + l} := ( x_{ij} )_{kl} \q x \in M_s\big( M_{n}(A)\big) .
\]
The $*$-isomorphism $I_s$ then restricts to a unital completely isometric isomorphism
\begin{equation}\label{eq:matsemi}
I_s : M_s\big( M_{n}(\C X)\big) \to M_{s \cd n}(\C X) .
\end{equation}
This latter isomorphism is also well-behaved with respect to the $M_s(\cc)$-bimodule structure on $M_s\big( M_{n}(\C X) \big)$ described in \eqref{eq:module}. Indeed, letting $\io : \cc \to M_{n}(\cc)$ denote the unique unital linear map we get that
\[
I_s( v \cd x \cd w) = I_s\big( \io_s(v) \big) \cd I_s(x) \cd I_s\big( \io_s(w) \big)
\]
for all $x \in M_s\big( M_{n}(\C X) \big)$ and $v,w \in M_s(\cc)$. Notice here that the products on the right hand side use the $M_{s \cd n}(\cc)$-bimodule structure on $M_{s \cd n}(\C X)$. We may also consider the block diagonal direct sum of matrices and notice that
\[
I_{s + r}( x \op y) = I_s(x) \op I_r(y)
\]
whenever $s,r \in \nn$ and $x \in M_s( M_{n}(\C X))$ and $y \in M_r( M_{n}(\C X))$. 

For each $s \in \nn$ we define the stabilized seminorm $\B L_s^{M_{n}(\C X)} : M_s\big( M_{n}(\C X) \big) \to [0,\infty)$ as the composition of the identification in \eqref{eq:matsemi} and the seminorm $\B L_{s \cd n} : M_{s \cd n}(\C X) \to [0,\infty)$. 

Gathering the above observations we obtain the following result:

\begin{lemma}\label{l:matrixstab}
The sequence of seminorms $\B L^{M_{n}(\C X)} := \{ \B L_s^{M_{n}(\C X)} \}_{s = 1}^\infty$ is an operator seminorm on the operator system $M_{n}(\C X)$. Moreover, if the operator seminorm $\B L$ is Lipschitz, then $\B L^{M_{n}(\C X)}$ is Lipschitz and if $\B L$ is lower semicontinuous, then $\B L^{M_{n}(\C X)}$ is again lower semicontinuous.
\end{lemma}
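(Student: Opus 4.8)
The plan is to verify the three assertions in turn, using the identification $I_s$ from \eqref{eq:matsemi} as the central bookkeeping device throughout. The essential point is that $I_s$ is a unital completely isometric isomorphism which intertwines the direct-sum operation and the bimodule structure, so almost every required property of $\B L^{M_n(\C X)}$ can be transported from the corresponding property of $\B L$ at a shifted matrix level. I would therefore begin by recording once and for all the three compatibility statements for $I_s$ already displayed in the excerpt: that $I_{s+r}(x \op y) = I_s(x) \op I_r(y)$, that $I_s(v \cd x \cd w) = I_s(\io_s(v)) \cd I_s(x) \cd I_s(\io_s(w))$, and that each $I_s$ is completely isometric (in particular $\| I_s(\io_s(v)) \| = \| \io_s(v) \| = \| v \|$, since $\io_s$ embeds $M_s(\cc)$ block-diagonally into $M_{s\cd n}(\cc)$).

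First I would check that $\B L^{M_n(\C X)} = \{ \B L^{M_n(\C X)}_s \}_{s=1}^\infty$ is an operator seminorm, i.e. that it satisfies conditions $(1)$ and $(2)$ of Definition \ref{d:opesemi}. For condition $(1)$, given $x \in M_s(M_n(\C X))$ and $y \in M_r(M_n(\C X))$, I compute
\[
\B L^{M_n(\C X)}_{s+r}(x \op y) = \B L_{(s+r) n}\big( I_{s+r}(x \op y) \big) = \B L_{sn + rn}\big( I_s(x) \op I_r(y) \big),
\]
and then apply property $(1)$ of the operator seminorm $\B L$ to rewrite this as $\max\{ \B L_{sn}(I_s(x)), \B L_{rn}(I_r(y)) \} = \max\{ \B L^{M_n(\C X)}_s(x), \B L^{M_n(\C X)}_r(y) \}$. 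For condition $(2)$, given $x \in M_s(M_n(\C X))$ and $v, w \in M_s(\cc)$, I use the bimodule compatibility of $I_s$ together with property $(2)$ of $\B L$ to estimate
\[
\B L^{M_n(\C X)}_s(v \cd x \cd w) = \B L_{sn}\big( I_s(\io_s(v)) \cd I_s(x) \cd I_s(\io_s(w)) \big) \leq \| \io_s(v) \| \cd \B L_{sn}(I_s(x)) \cd \| \io_s(w) \|,
\]
and the complete isometry of $I_s$ (more precisely, that $\io$ is unital so $\io_s$ preserves norms of scalar matrices) gives $\| \io_s(v) \| = \| v \|$ and $\| \io_s(w) \| = \| w \|$, yielding the desired inequality.

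Next I would treat the Lipschitz claim. By the remark following Lemma \ref{l:matker}, it suffices to check $*$-invariance and that $\B L^{M_n(\C X)}_1$ vanishes on the unit. Since $I_s$ is a $*$-isomorphism it commutes with the adjoint, so $\B L^{M_n(\C X)}_s(x^*) = \B L_{sn}(I_s(x)^*) = \B L_{sn}(I_s(x)) = \B L^{M_n(\C X)}_s(x)$ using $*$-invariance of $\B L$; and $I_1(1_{M_n(\C X)})$ is the scalar identity $1_{M_n(\C X)} \in M_n(\cc \cd 1_{\C X})$, on which $\B L_n$ vanishes because $\B L$ vanishes on scalar matrices. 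Finally, for lower semicontinuity I would observe that each $I_s$ is a $C^*$-norm homeomorphism (being a $*$-isomorphism of $C^*$-algebras, hence isometric), so for each $r \geq 0$ the set $\ov{\B B}_r(0, \B L^{M_n(\C X)}_s) = I_s^{-1}\big( \ov{\B B}_r(0, \B L_{sn}) \big)$ is the preimage under the continuous map $I_s$ of a $C^*$-norm closed set, hence closed; thus lower semicontinuity of $\B L_{sn}$ transfers to $\B L^{M_n(\C X)}_s$.

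I do not anticipate a genuine obstacle here: the content of the lemma is that $I_s$ is the right intertwining isomorphism, and once its three compatibility properties are in hand every verification is a direct transport. The only point requiring a moment of care is the bookkeeping in condition $(2)$, where one must confirm that the block-diagonal embedding $\io_s : M_s(\cc) \to M_{sn}(\cc)$ is norm-preserving so that the constants $\| v \|, \| w \|$ survive the passage through $I_s$; this is immediate since $\io$ is the unital map $\cc \to M_n(\cc)$ and amplification by the identity on $M_n(\cc)$ does not change operator norms.
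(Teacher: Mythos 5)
Your proof is correct and follows exactly the route the paper intends: the paper gives no separate argument for Lemma \ref{l:matrixstab} beyond ``gathering the above observations,'' namely the three compatibility properties of $I_s$ (complete isometry, intertwining of the bimodule structure via $\io_s$, and intertwining of block-diagonal direct sums) that you record and then transport through. Your verifications of conditions $(1)$ and $(2)$, of the Lipschitz property via the reduction to $*$-invariance and $\B L_1(1)=0$, and of lower semicontinuity via preimages of closed balls under the homeomorphism $I_s$ are all sound.
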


It is worthwhile to point out that Lemma \ref{l:matker} implies that the kernel of the seminorm $\B L_1^{M_n(\C X)} : M_n(\C X) \to [0,\infty)$ agrees with the subspace $M_n(\T{ker}(\B L_1)) \su M_n(\C X)$. In particular, we see that if the operator seminorm $\B L = \{\B L_s\}_{s = 1}^\infty$ is Lipschitz, then the kernel of $\B L_1^{M_{n}(\C X)}$ does at least contain the subspace $M_{n}(\B C \cd 1_{\C X}) \su M_{n}(\C X)$. 


\subsection{Minimal tensor products}\label{ss:mtp}
Throughout this subsection we let $\C X \su A$ and $\C Y \su B$ be operator systems. We are moreover fixing operator seminorms $\B L := \{\B L_s\}_{s = 1}^\infty$ and $\B K := \{ \B K_s\}_{s = 1}^\infty$ on $\C X$ and $\C Y$, respectively. 

We denote the minimal tensor product of the unital $C^*$-algebras $A$ and $B$ by $A \otm B$ and consider the algebraic tensor product
\[
\C X \ot \C Y := \T{span}_{\cc}\big\{ x \ot y \mid x \in \C X \, , \, \, y \in \C Y \big\} \su A \otm B .
\]
This algebraic tensor product is again an operator system.

Let $n,m \in \nn$ and let $\varphi : \C Y \to M_n(\cc)$ and $\psi : \C X \to M_m(\cc)$ be unital completely positive maps. As a consequence of \cite[Theorem 4.4]{KPTT:TPO} we get that
\begin{equation}\label{eq:oneucp}
\begin{split}
& 1 \ot \varphi : \C X \ot \C Y \to M_n(\C X) \q (1 \ot \varphi)(x \ot y) := x^{\op n} \cd \varphi(y) \q \T{and} \\
  & \psi \ot 1 : \C X \ot \C Y \to M_m(\C Y) \q (\psi \ot 1)(x \ot y) := \psi(x) \cd y^{\op m}
  \end{split}
\end{equation}
  are unital completely positive maps.

For each $s \in \nn$ and $z \in M_s(\C X \ot \C Y)$ we may then describe the $C^*$-norm of $z$ viewed as an element in the unital $C^*$-algebra $M_s(A \otm B)$ via the following two suprema:
  \begin{equation}\label{eq:c*norm}
  \begin{split}
  \| z \| & = \sup\Big\{ \big\| (1 \ot \varphi)_s(z) \big\| \mid \varphi \in \T{UCP}_\infty(\C Y) \Big\} \\
  & = \sup\Big\{ \big\| (\psi \ot 1)_s(z) \big\| \mid \psi \in \T{UCP}_\infty(\C X) \Big\} .
  \end{split}
  \end{equation}
  Notice here that for $n \in \nn$ and $\varphi \in \T{UCP}_n(\C X)$ we get that $(1 \ot \varphi)_s(z) \in M_s( M_n(\C X))$ and the corresponding $C^*$-norm is then inherited from the unital $C^*$-algebra, $M_s( M_n(A) )$. This explains the various $C^*$-norms appearing in the first supremum of \eqref{eq:c*norm}. The $C^*$-norms appearing in the second supremum of \eqref{eq:c*norm} can be explained in a similar fashion. 

For later use we spell out a small observation regarding minimal tensor products: For each $n \in \nn$ we have the linear isomorphism
\begin{equation}\label{eq:matrix}
\io : M_n(\C X) \ot \C Y \to M_n(\C X \ot \C Y) \q \io(x \ot y)_{ij} := x_{ij} \ot y
\end{equation}
and, upon viewing the two algebraic tensor products as operator systems inside the corresponding minimal tensor products of unital $C^*$-algebras, we get that $\io$ is a unital complete isometry.
\medskip

The aim of this subsection is to introduce operator seminorms $\B L \ot 1$ and $1 \ot \B K$ on the operator system $\C X \ot \C Y \su A \otm B$. 
\medskip

Let us first fix two strictly positive integers $n, m \in \nn$ together with two unital completely positive maps $\varphi : \C Y \to M_n(\cc)$ and $\psi : \C X \to M_m(\cc)$. For each $s \in \nn$ we recall the definition of the stabilized seminorms
\[
\B L^{M_n(\C X)}_s : M_s\big( M_n(\C X) \big) \to [0,\infty) \, \, \T{and} \, \, \,
\B K^{M_m(\C Y)}_s : M_s\big( M_m(\C Y)\big) \to [0,\infty) .
  \]
  from Subsection \ref{ss:matsem}. We then define seminorms $\B L^\varphi_s$ and $\B K^\psi_s : M_s( \C X \ot \C Y ) \to [0,\infty)$ by putting
\[ 
\B L^{\varphi}_s(z) := \B L_s^{M_n(\C X)}\big( (1 \ot \varphi)_s(z) \big) \, \, \T{ and } \, \, \,
\B K^{\psi}_s(z) := \B K_s^{M_m(\C Y)}\big((\psi \ot 1)_s(z)\big)
\]
for all $z \in M_s( \C X \ot  \C Y )$.


The next lemma is a consequence of Lemma \ref{l:matrixstab}.

\begin{lemma}\label{l:ucp}
The sequences of seminorms $\B L^\varphi := \big\{ \B L^\varphi_s \big\}_{s = 1}^\infty$ and $\B K^\psi := \big\{ \B K^\psi_s \big\}_{s = 1}^\infty$ are operator seminorms on the operator system $\C X \ot \C Y$. If $\B L$ is Lipschitz, then $\B L^\varphi$ is Lipschitz and if $\B L$ is lower semicontinuous, then $\B L^\varphi$ is also lower semicontinuous. Similar statements hold for the operator seminorms $\B K$ and $\B K^\psi$.
\end{lemma}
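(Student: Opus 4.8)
The plan is to recognise each seminorm $\B L^\varphi_s$ as the pullback of the stabilised operator seminorm $\B L^{M_n(\C X)}_s$ along the entrywise amplification $(1 \ot \varphi)_s : M_s(\C X \ot \C Y) \to M_s(M_n(\C X))$. Since Lemma \ref{l:matrixstab} already guarantees that $\B L^{M_n(\C X)} = \{ \B L^{M_n(\C X)}_s \}_{s=1}^\infty$ is an operator seminorm on $M_n(\C X)$ — Lipschitz or lower semicontinuous whenever $\B L$ has the corresponding property — the whole lemma reduces to checking that the pullback along $1 \ot \varphi$ transports each of these properties. The three features of $1 \ot \varphi$ that make this work are that it is linear, that it carries block-diagonal direct sums to block-diagonal direct sums, and that it is $M_s(\cc)$-bimodule linear. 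Linearity of $(1 \ot \varphi)_s$ immediately shows that each $\B L^\varphi_s = \B L^{M_n(\C X)}_s \circ (1 \ot \varphi)_s$ is again a seminorm.

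Next I would verify the two axioms of Definition \ref{d:opesemi}. For the direct sum axiom I would first record the identity $(1 \ot \varphi)_{s+r}(z \op w) = (1 \ot \varphi)_s(z) \op (1 \ot \varphi)_r(w)$, which holds because $1 \ot \varphi$ is applied entrywise and sends the vanishing off-diagonal entries of $z \op w$ to $0$; condition $(1)$ for $\B L^\varphi$ then follows from condition $(1)$ for $\B L^{M_n(\C X)}$. For the bimodule axiom, the key computation is that for $v, w \in M_s(\cc)$ and $z \in M_s(\C X \ot \C Y)$ one has $(1 \ot \varphi)_s(v \cd z \cd w) = v \cd (1 \ot \varphi)_s(z) \cd w$; this is a direct consequence of the entrywise linearity of $1 \ot \varphi$ together with the fact that the scalar entries of $v$ and $w$ commute through the map. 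Condition $(2)$ for $\B L^\varphi$ then drops out of condition $(2)$ for $\B L^{M_n(\C X)}$, since the operator norms $\|v\|$ and $\|w\|$ are unchanged.

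For the Lipschitz claim I would invoke the remark following Definition \ref{d:lipmatsem}, reducing matters to $*$-invariance together with $\B L^\varphi_1(1_{\C X \ot \C Y}) = 0$. The former follows because $1 \ot \varphi$, being unital completely positive, is selfadjoint, so that $(1 \ot \varphi)_s(z^*) = ((1 \ot \varphi)_s(z))^*$, combined with the $*$-invariance of $\B L^{M_n(\C X)}$; the latter follows because $1 \ot \varphi$ is unital, whence $\B L^\varphi_1(1_{\C X \ot \C Y}) = \B L^{M_n(\C X)}_1(1_{M_n(\C X)}) = 0$. For lower semicontinuity I would use that $1 \ot \varphi$ is completely contractive (being unital and completely positive), so $(1 \ot \varphi)_s$ is $C^*$-norm continuous, and observe that $\ov{\B B}_r(0, \B L^\varphi_s)$ is precisely the preimage under $(1 \ot \varphi)_s$ of the closed ball $\ov{\B B}_r(0, \B L^{M_n(\C X)}_s)$, which is closed by the lower semicontinuity supplied by Lemma \ref{l:matrixstab}.

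The argument for $\B K^\psi$ is entirely symmetric, replacing $1 \ot \varphi$ by $\psi \ot 1$ and $\B L^{M_n(\C X)}$ by $\B K^{M_m(\C Y)}$. The only step that demands genuine care — rather than bookkeeping — is the bimodule identity in the second paragraph: one must confirm that the outer $M_s(\cc)$-action on $M_s(\C X \ot \C Y)$ is intertwined by $(1 \ot \varphi)_s$ with exactly the outer $M_s(\cc)$-action on $M_s(M_n(\C X))$ appearing in axiom $(2)$ of $\B L^{M_n(\C X)}$, and not with some action involving the inner $M_n(\cc)$ factor. Once the correct bimodule structures are matched up, every remaining verification is a formal transport of properties through the pullback.
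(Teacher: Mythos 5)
Your proposal is correct and follows exactly the route the paper intends: the paper simply declares the lemma to be a consequence of Lemma \ref{l:matrixstab}, and your argument supplies the (correct) details of why the pullback along the unital completely positive map $1 \ot \varphi$ transports the operator seminorm axioms, the Lipschitz property and lower semicontinuity from $\B L^{M_n(\C X)}$ to $\B L^\varphi$. In particular you correctly identify the two points that actually need checking, namely the compatibility of $(1 \ot \varphi)_s$ with block-diagonal sums and with the outer $M_s(\cc)$-bimodule action, and the complete contractivity used for lower semicontinuity.
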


In the following lemma we estimate the operator seminorms $\B L^\varphi$ and $\B K^\psi$ on elementary tensors.

\begin{lemma}\label{l:upphipsi}
For each $x \in \C X$ and $y \in \C Y$ we have the inequalities
  \[
\B L^\varphi_1( x \ot y ) \leq \B L_1(x) \cd \| y \| \q \mbox{and} \q \B K^\psi_1(x \ot y) \leq \| x \| \cd \B K_1(y) .
  \]
\end{lemma}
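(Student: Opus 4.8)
The plan is to unwind the definition of $\B L^\varphi_1$ and then apply the two defining axioms of an operator seminorm. First I would observe that at the first matrix level the stabilization $\B L^{M_n(\C X)}$ is transparent: the isomorphism $I_1 : M_1(M_n(\C X)) \to M_n(\C X)$ from \eqref{eq:matsemi} simply forgets a trivial subdivision and is therefore the identity, so $\B L_1^{M_n(\C X)} = \B L_n$. Consequently, by the definition of $\B L^\varphi$ together with the formula for $1 \ot \varphi$ in \eqref{eq:oneucp},
\[
\B L^\varphi_1(x \ot y) = \B L_1^{M_n(\C X)}\big( (1 \ot \varphi)(x \ot y)\big) = \B L_n\big( x^{\op n} \cd \varphi(y) \big) .
\]

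Next I would interpret the element $x^{\op n} \cd \varphi(y) \in M_n(\C X)$ as the right translate of the block diagonal matrix $x^{\op n}$ by the scalar matrix $\varphi(y) \in M_n(\cc)$ in the $M_n(\cc)$-bimodule structure \eqref{eq:module}. Axiom $(2)$ of Definition \ref{d:opesemi}, applied with $v = 1$ and $w = \varphi(y)$, then gives
\[
\B L_n\big( x^{\op n} \cd \varphi(y) \big) \leq \B L_n\big( x^{\op n} \big) \cd \| \varphi(y) \| .
\]

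To finish, I would bring in the remaining two inputs. On the one hand, $x^{\op n} = x \op \cdots \op x$ is the $n$-fold block diagonal direct sum, so iterating axiom $(1)$ of Definition \ref{d:opesemi} yields $\B L_n(x^{\op n}) = \B L_1(x)$. On the other hand, $\varphi$ is unital and completely positive, hence completely contractive by the discussion following Definition \ref{d:cpcb}, so in particular $\| \varphi(y) \| \leq \| y \|$. Combining the three displays gives the first asserted inequality $\B L^\varphi_1(x \ot y) \leq \B L_1(x) \cd \| y \|$, and the inequality for $\B K^\psi$ follows verbatim after interchanging the roles of the two tensor factors and using $\psi \ot 1$ in place of $1 \ot \varphi$.

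There is no genuinely difficult step here; the whole argument is a direct application of the operator seminorm axioms once the definition has been unwound. The only points demanding a little care are the identification $\B L_1^{M_n(\C X)} = \B L_n$ at the first matrix level and the recognition that $(1 \ot \varphi)(x \ot y)$ is precisely the scalar matrix translate of the diagonal matrix $x^{\op n}$, so that axioms $(1)$ and $(2)$ apply cleanly and separately to the two factors.
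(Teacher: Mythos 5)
Your proposal is correct and follows exactly the same route as the paper's proof: unwind $\B L^\varphi_1$ via the identification $\B L_1^{M_n(\C X)} = \B L_n$, apply axiom $(2)$ of Definition \ref{d:opesemi} to peel off $\| \varphi(y) \|$, use axiom $(1)$ to get $\B L_n(x^{\op n}) = \B L_1(x)$, and finish with the complete contractivity of $\varphi$. The paper merely compresses these steps into a single chain of (in)equalities.
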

\begin{proof}
Using the defining properties of operator seminorms we get that
\[
\begin{split}
\B L^\varphi_1( x \ot y ) & = \B L_1^{M_n(\C X)}\big( x^{\op n} \cd \varphi(y) \big) = \B L_n\big(x^{\op n} \cd \varphi(y) \big) \\
& \leq \B L_n(x^{\op n}) \cd \| \varphi(y) \| = \B L_1(x) \cd \| \varphi(y) \| \leq \B L_1(x) \cd \| y \| .
\end{split}
\]
A similar argument applies to the seminorm $\B K^\psi_1 : \C X \ot \C Y \to [0,\infty)$.
\end{proof}

For each $s \in \nn$ we define the seminorms $(\B L \ot 1)_s$ and $(1 \ot \B K)_s$ on $M_s(\C X \ot \C Y)$ by the formulae
\[
\begin{split}
& (\B L \ot 1)_s(z) := \sup\big\{ \B L^\varphi_s(z) \mid \varphi \in \T{UCP}_\infty(\C Y) \big\} \q \T{and} \\
& (1 \ot \B K)_s(z) := \sup\big\{ \B K^\psi_s(z) \mid \psi \in \T{UCP}_\infty(\C X) \big\} .
\end{split}
\]
Notice that Lemma \ref{l:upphipsi} together with the first inequality in \eqref{eq:semientry} imply that neither $(\B L \ot 1)_s$ nor $(1 \ot \B K)_s$ can take the value infinity. 

The main result of this subsection can now be stated. The proof is an application of Lemma \ref{l:ucp} and the estimates in Lemma \ref{l:upphipsi}. Remark that the statement regarding lower semicontinuity follows since a supremum of lower semicontinuous seminorms is again a lower semicontinuous seminorm. 

\begin{lemma}\label{l:tenslip}
  The sequences of seminorms $\B L \ot 1 := \big\{ (\B L \ot 1)_s \big\}_{s = 1}^\infty$ and $1 \ot \B K := \big\{ (1 \ot \B K)_s \big\}_{s = 1}^\infty$ are operator seminorms on the operator system $\C X \ot \C Y$. Moreover, for every $x \in \C X$ and $y \in \C Y$ we have the inequalities
  \begin{equation}\label{eq:tenslip}
(\B L \ot 1)_1(x \ot y) \leq \B L_1(x) \cd \| y \| \q \mbox{and} \q (1 \ot \B K)_1(x \ot y) \leq \| x \| \cd \B K_1(y) .
  \end{equation}
  If $\B L$ is Lipschitz, then $\B L \ot 1$ is Lipschitz and if $\B L$ is lower semicontinuous, then $\B L \ot 1$ is again lower semicontinuous. Similar statements hold for the operator seminorms $\B K$ and $1 \ot \B K$.
\end{lemma}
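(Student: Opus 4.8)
The plan is to realize $\B L \ot 1$ as the pointwise supremum of the family of operator seminorms $\{ \B L^\varphi \}_{\varphi \in \T{UCP}_\infty(\C Y)}$ supplied by Lemma \ref{l:ucp}, and then to check that every property we want — the two axioms of Definition \ref{d:opesemi}, the elementary-tensor estimates \eqref{eq:tenslip}, the Lipschitz conditions and lower semicontinuity — is stable under taking such a supremum. The assertions concerning $1 \ot \B K$ are obtained from those for $\B L \ot 1$ by interchanging the roles of $(\C X, \B L)$ and $(\C Y, \B K)$, so I would treat only $\B L \ot 1$ in detail and invoke symmetry at the end.

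First I would confirm that $(\B L \ot 1)_s$ never takes the value infinity, which is the only step requiring genuine work. Given $z \in M_s(\C X \ot \C Y)$, I expand $z$ into its entries $z_{ij} \in \C X \ot \C Y$ and write each entry as a finite sum of elementary tensors. Applying the first inequality in \eqref{eq:semientry} to the operator seminorm $\B L^\varphi$, followed by subadditivity of $\B L^\varphi_1$ and the bound $\B L^\varphi_1(x \ot y) \leq \B L_1(x) \cd \| y \|$ from Lemma \ref{l:upphipsi}, yields an upper bound for $\B L^\varphi_s(z)$ which is a finite sum of terms of the form $\B L_1(x) \cd \| y \|$ and is therefore \emph{independent of} $\varphi$. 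Taking the supremum over $\varphi \in \T{UCP}_\infty(\C Y)$ then gives $(\B L \ot 1)_s(z) < \infty$; the uniformity of this bound in $\varphi$ is the crux of the matter.

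Next I would verify the two axioms for the supremum. The bimodule estimate (axiom (2)) passes through directly: for $v, w \in M_s(\cc)$ the inequality $\B L^\varphi_s(v \cd z \cd w) \leq \| v \| \cd \B L^\varphi_s(z) \cd \| w \|$ holds for every $\varphi$, and taking the supremum on both sides (the right-hand factors $\| v \|$, $\| w \|$ being $\varphi$-independent) gives axiom (2) for $\B L \ot 1$. For the direct sum axiom (1) I would combine the identity $\B L^\varphi_{s+r}(x \op y) = \max\{ \B L^\varphi_s(x), \B L^\varphi_r(y) \}$, valid for each $\varphi$ by Lemma \ref{l:ucp}, with the elementary interchange $\sup_\varphi \max\{ a_\varphi, b_\varphi \} = \max\{ \sup_\varphi a_\varphi, \sup_\varphi b_\varphi \}$ for bounded families of nonnegative reals.

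Finally I would dispatch the remaining claims. The inequalities \eqref{eq:tenslip} follow by taking the supremum over $\varphi$ in Lemma \ref{l:upphipsi}, again using that the bound $\B L_1(x) \cd \| y \|$ is independent of $\varphi$. When $\B L$ is Lipschitz, each $\B L^\varphi$ is $*$-invariant, so $*$-invariance is inherited termwise by $\B L \ot 1$, and the vanishing $(\B L \ot 1)_1(1_{\C X} \ot 1_{\C Y}) = 0$ is immediate from \eqref{eq:tenslip} with $x = 1_{\C X}$ and $y = 1_{\C Y}$; by the criterion recorded after Lemma \ref{l:matker} these two facts make $\B L \ot 1$ Lipschitz. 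Lower semicontinuity, when $\B L$ enjoys it, is inherited since each $\B L^\varphi$ is lower semicontinuous by Lemma \ref{l:ucp} and a pointwise supremum of lower semicontinuous seminorms is lower semicontinuous.
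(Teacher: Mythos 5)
Your proposal is correct and follows exactly the route the paper indicates: the paper only sketches the argument, stating that the lemma follows from Lemma \ref{l:ucp} and the estimates in Lemma \ref{l:upphipsi}, that finiteness of $(\B L \ot 1)_s$ comes from Lemma \ref{l:upphipsi} combined with the first inequality in \eqref{eq:semientry}, and that lower semicontinuity is preserved under suprema. Your write-up supplies the same supremum-of-$\B L^\varphi$ argument with the details (the $\varphi$-uniform finiteness bound, the interchange of $\sup$ and $\max$, the Lipschitz criterion after Lemma \ref{l:matker}) correctly filled in.
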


We emphasize that if the operator seminorm $\B L$ is Lipschitz, then the kernel of the seminorm $(\B L \ot 1)_1 : \C X \ot \C Y \to [0,\infty)$ is in general much bigger than $\cc \cd (1_{\C X} \ot 1_{\C Y})$. Indeed, this kernel does at least contain the subspace $\cc \cd 1_{\C X} \ot \C Y$ of the minimal tensor product $\C X \ot \C Y$. Similarly, if the operator seminorm $\B K$ is Lipschitz, then the kernel of the seminorm $(1 \ot \B K)_1 : \C X \ot \C Y \to [0,\infty)$ contains the subspace $\C X \ot \cc \cd 1_{\C Y} \su \C X \ot \C Y$.
\medskip

For future applications it may be relevant to enlarge the domain of the operator seminorms $\B L \ot 1$ and $1 \ot \B K$ and we will now explain how this can be done. We suppose that both of the operator seminorms $\B L$ and $\B K$ are $*$-invariant in the sense of Definition \ref{d:lipmatsem} $(1)$. The two new operator seminorms are going to be denoted by $\B L \hot 1$ and $1 \hot \B K$ and they will be defined on operator systems $\C D(\B L \hot 1) \su A \otm B$ and $\C D(1 \hot \B K) \su A \otm B$, respectively. Both of these operator systems are going to contain the algebraic tensor product $\C X \ot \C Y$.

Let us denote the $C^*$-norm closure of the algebraic tensor product $\C X \ot \C Y \su A \otm B$ by $X \otm Y$ so that $X \otm Y$ is a complete operator system inside $A \otm B$. Furthermore, we let $X$ and $Y$ denote the $C^*$-norm closures of $\C X \su A$ and $\C Y \su B$, respectively. For each $n \in \nn$ and each $\varphi \in \T{UCP}_n(\C X)$ we record that the unital completely positive map $1 \ot \varphi : \C X \ot \C Y \to M_n(\C X)$ from \eqref{eq:oneucp} extends by continuity to a unital completely positive map $1 \ot \varphi : X \otm Y \to M_n(X)$.

      We introduce the sub-operator system $\C D(\B L \hot 1)$ of $X \otm Y$ by saying that an element $z \in X \otm Y$ belongs to $\C D(\B L \hot 1)$ if and only if the following two conditions hold:
      \begin{enumerate}
      \item For each $n \in \nn$ and $\varphi \in \T{UCP}_n(\C X)$ we have that $(1 \ot \varphi)(z) \in M_n(\C X)$;
      \item There exists a constant $C \geq 0$ (depending on $z$) such that
        \[
        \B L_n\big( (1 \ot \varphi)(z) \big) \leq C \q \T{for all } n \in \nn \T{ and } \varphi \in \T{UCP}_n(\C X).
        \]
      \end{enumerate}
      
      For each $s \in \nn$ we then define the seminorm $(\B L \hot 1)_s : M_s\big( \C D(\B L \hot 1) \big) \to [0,\infty)$ by putting
      \[
(\B L \hot 1)_s(z) := \sup\big\{ \B L_s^{M_n(\C X)}\big( (1 \ot \varphi)_s(z) \big) \mid n \in \nn \T{ and } \varphi \in \T{UCP}_n(\C X) \big\}.
      \]
      Remark that a combination of the definition of the operator system $\C D(\B L \hot 1) \su A \otm B$ and the first inequality in \eqref{eq:semientry} implies that the above supremum is indeed finite. 

      Using an almost identical procedure we also obtain a sub-operator system $\C D(1 \hot \B K) \su X \otm Y$ and a seminorm $(1 \hot \B K)_s : M_s\big( \C D(1 \hot \B K) \big) \to [0,\infty)$ for every $s \in \nn$.
        
The following lemma is the analogue of Lemma \ref{l:tenslip} for our new sequences of seminorms $\B L \hot 1 := \big\{ (\B L \hot 1)_s \big\}_{s = 1}^\infty$ and $1 \hot \B K := \big\{ (1 \hot \B K)_s \big\}_{s = 1}^\infty$. It is moreover made explicit that $\B L \hot 1$ and $1 \hot \B K$ extend the sequences of seminorms $\B L \ot 1$ and $1 \ot \B K$, respectively. 
      
      \begin{lemma}\label{l:tensmax}
        Suppose that the operator seminorms $\B L$ and $\B K$ are $*$-invariant. The sequences of seminorms $\B L \hot 1 := \big\{ (\B L \hot 1)_s \big\}_{s = 1}^\infty$ and $1 \hot \B K := \big\{ (1 \hot \B K)_s \big\}_{s = 1}^\infty$ are operator seminorms on the operator systems $\C D(\B L \hot 1)$ and $\C D(1 \hot \B K)$, respectively. Moreover, for every $s \in \nn$ we have the inclusion
        \[
M_s\big( \C X \ot \C Y \big) \su M_s\big( \C D(\B L \hot 1) \cap \C D(1 \hot \B K) \big)
\]
and the identities
\[
(\B L \ot 1)_s(z) = (\B L \hot 1)_s(z) \q \mbox{and} \q (1 \ot \B K)_s(z) = (1 \hot \B K)_s(z)
\]
hold for all $z \in M_s(\C X \ot \C Y)$. If $\B L$ is Lipschitz, then $\B L \hot 1$ is Lipschitz and if $\B L$ is lower semicontinuous, then $\B L \hot 1$ is again lower semicontinuous. Similar statements hold for the operator seminorms $\B K$ and $1 \hot \B K$.
\end{lemma}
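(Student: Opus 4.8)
The plan is to reduce every assertion to the corresponding property of the stabilized operator seminorms $\B L^{M_n(\C X)}$ established in Lemma \ref{l:matrixstab}, together with the elementary observation that the maps $1 \ot \varphi$ from \eqref{eq:oneucp} are applied entrywise and therefore commute with block-diagonal direct sums, with multiplication by scalar matrices, and with the adjoint operation, while being $C^*$-norm continuous. The first step is to check that $\C D(\B L \hot 1)$ is genuinely a sub-operator system of $X \otm Y$. That it is a linear subspace follows because conditions $(1)$ and $(2)$ in its definition are preserved under linear combinations, using subadditivity of each $\B L_n$ together with the triangle inequality for the uniform bounds. The unit lies in $\C D(\B L \hot 1)$ because $(1 \ot \varphi)(1_X \ot 1_Y) = 1_{M_n(\C X)}$ and the direct-sum axiom for $\B L$ gives $\B L_n(1_{M_n(\C X)}) = \B L_1(1_{\C X})$ uniformly in $n$. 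Closure under the adjoint is precisely where the standing hypothesis that $\B L$ is $*$-invariant enters: since $1 \ot \varphi$ is $*$-preserving and each $\B L_n$ is $*$-invariant, the defining conditions for $z^*$ follow from those for $z$.

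Next I would verify the two axioms of Definition \ref{d:opesemi} for $\B L \hot 1$. For the direct-sum axiom I would use that $(1 \ot \varphi)_{s+r}(x \op y) = (1 \ot \varphi)_s(x) \op (1 \ot \varphi)_r(y)$ and that each $\B L^{M_n(\C X)}$ satisfies $\B L_{s+r}^{M_n(\C X)}(u \op w) = \max\{\B L_s^{M_n(\C X)}(u), \B L_r^{M_n(\C X)}(w)\}$; the required \emph{equality} for $\B L \hot 1$ then follows from the identity $\sup_\al \max\{f(\al),g(\al)\} = \max\{\sup_\al f(\al), \sup_\al g(\al)\}$ applied to the supremum over the pairs $\al = (n,\varphi)$. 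For the bimodule axiom I would use that applying $1 \ot \varphi$ entrywise converts multiplication by $v,w \in M_s(\cc)$ on $M_s(\C X \ot \C Y)$ into the corresponding scalar-matrix multiplication on $M_s(M_n(\C X))$, so that property $(2)$ of each $\B L^{M_n(\C X)}$ passes to the supremum. Finiteness of the supremum on $M_s(\C D(\B L \hot 1))$ is guaranteed by condition $(2)$ of the domain together with the first inequality in \eqref{eq:semientry}, exactly as already remarked before the statement.

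For the inclusion and the identities comparing $\B L \hot 1$ with $\B L \ot 1$, I would first show that every $z \in \C X \ot \C Y$ lies in $\C D(\B L \hot 1)$: writing $z$ as a finite sum of elementary tensors and running the estimate behind Lemma \ref{l:upphipsi} (using $\| \varphi(y) \| \leq \| y \|$) produces a bound on $\B L_n\big((1 \ot \varphi)(z)\big)$ uniform in $n$ and $\varphi$, which is precisely condition $(2)$. The symmetric argument with $\psi \ot 1$ and $\B K$ gives $z \in \C D(1 \hot \B K)$, hence the asserted inclusion of $M_s(\C X \ot \C Y)$. The identities $(\B L \ot 1)_s(z) = (\B L \hot 1)_s(z)$ on $M_s(\C X \ot \C Y)$ are then immediate, since on this subspace the two defining suprema run over the same family of maps $1 \ot \varphi$ with the same integrand $\B L_s^{M_n(\C X)}\big((1 \ot \varphi)_s(z)\big)$; the identities for $1 \hot \B K$ are symmetric.

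Finally, the Lipschitz and lower-semicontinuity claims. If $\B L$ is Lipschitz, then by Lemma \ref{l:matrixstab} each $\B L^{M_n(\C X)}$ is Lipschitz, hence $*$-invariant; combined with the $*$-preservation of $1 \ot \varphi$ this yields $*$-invariance of $\B L \hot 1$, and $(\B L \hot 1)_1(1_X \ot 1_Y) = \sup_{n,\varphi} \B L_1(1_{\C X}) = 0$, so $\B L \hot 1$ is Lipschitz by the remark following Lemma \ref{l:matker}. If $\B L$ is lower semicontinuous, then each $\B L^{M_n(\C X)}$ is lower semicontinuous, and since $(1 \ot \varphi)_s$ is $C^*$-norm continuous and lands in $M_s(M_n(\C X))$ on the domain, every function $z \mapsto \B L_s^{M_n(\C X)}\big((1 \ot \varphi)_s(z)\big)$ is lower semicontinuous on $M_s(\C D(\B L \hot 1))$; the supremum $(\B L \hot 1)_s$ is therefore again lower semicontinuous. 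I expect the main obstacle to be conceptual bookkeeping rather than a single hard estimate: the most delicate points are confirming that $\C D(\B L \hot 1)$ is closed under the adjoint (where $*$-invariance of $\B L$ is indispensable) and that lower semicontinuity survives the passage to the \emph{enlarged} domain, where one must keep track of the fact that the continuous maps $(1 \ot \varphi)_s$ take values in the space $M_s(M_n(\C X))$ on which $\B L^{M_n(\C X)}$ is lower semicontinuous.
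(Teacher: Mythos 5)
Your proof is correct, and it fills in exactly the argument the paper leaves implicit: the lemma is stated without proof as the analogue of Lemma \ref{l:tenslip}, whose own proof is only sketched as an application of Lemma \ref{l:ucp} (hence Lemma \ref{l:matrixstab}), the estimates of Lemma \ref{l:upphipsi}, and the fact that a supremum of lower semicontinuous seminorms is lower semicontinuous. Your reduction to the stabilized seminorms $\B L^{M_n(\C X)}$ via the entrywise compatibility of $1 \ot \varphi$ with direct sums, scalar bimodule actions and adjoints, and your identification of the $*$-invariance hypothesis as what makes $\C D(\B L \hot 1)$ closed under the adjoint, are precisely the intended route.
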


\subsection{Lipschitz operator seminorms from unital spectral triples}\label{ss:lipopespe}
Part of the motivation for introducing operator seminorms stems from the fact that any unital spectral triple (or more generally any unital Lipschitz triple) gives rise to a lower semicontinuous Lipschitz operator seminorm in a canonical way. In this subsection we shall see how this is achieved and we are moreover going to investigate the tensor product construction from Subsection \ref{ss:mtp} in the context of unital Lipschitz triples.

Let us a fix a unital Lipschitz triple $(\C A,H,D)$. For each $n \in \nn$ we may stabilize $(\C A,H,D)$ by $(n \ti n)$-matrices and obtain the unital Lipschitz triple $( M_n(\C A), H^{\op n}, D^{\op n})$. In the case where $(\C A,H,D)$ is a unital spectral triple, the stabilization $( M_n(\C A), H^{\op n}, D^{\op n})$ is again a unital spectral triple of the same parity as $(\C A,H,D)$. If $(\C A,H,D)$ is even, then the $\zz/2\zz$-grading operator for the stabilization is the $n$-fold direct sum of the original $\zz/2\zz$-grading operator. 

As in Subsection \ref{ss:spemetspa} we obtain a lower semicontinuous Lipschitz seminorm $L_{D^{\op n}} : M_n(\C A) \to [0,\infty)$ for every $n \in \nn$. The following fundamental result is then a consequence of Lemma \ref{l:liplip} (except for a few minor observations):

\begin{prop}
The sequence of seminorms $\B L_D := \{ L_{D^{\op n}} \}_{n = 1}^\infty$ defines a lower semicontinuous Lipschitz operator seminorm on the operator system $\C A \su \B L(H)$.
\end{prop}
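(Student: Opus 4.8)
The plan is to reduce almost everything to Lemma \ref{l:liplip} applied to the stabilized triples and then to verify the two axioms of Definition \ref{d:opesemi} directly. The key structural observation I would record first is that the commutator acts entrywise: since $D^{\op s}$ is the diagonal operator $\T{diag}(D,\ldots,D)$, for $a = (a_{ij}) \in M_s(\C A)$ one has $[D^{\op s}, a]_{ij} = [D, a_{ij}]$ on $\T{dom}(D)^{\op s} = \T{dom}(D^{\op s})$, each $a_{ij}$ preserves $\T{dom}(D)$, and the bounded extension is $d^{\op s}(a) = (d(a_{ij}))_{ij} \in M_s(\B L(H))$, whence $L_{D^{\op s}}(a) = \| (d(a_{ij}))_{ij} \|$. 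Since the text grants that every stabilization $(M_s(\C A), H^{\op s}, D^{\op s})$ is a unital Lipschitz triple, Lemma \ref{l:liplip} tells us at once that each $\B L_s = L_{D^{\op s}}$ is a finite-valued, lower semicontinuous, $*$-invariant seminorm with $\B L_s(1_{M_s(\C A)}) = 0$. This already disposes of lower semicontinuity, $*$-invariance, and the vanishing condition.

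It then remains to verify the two axioms of Definition \ref{d:opesemi}. For axiom (1), given $x \in M_s(\C A)$ and $y \in M_r(\C A)$, the entrywise formula shows that $d^{\op(s+r)}(x \op y) = d^{\op s}(x) \op d^{\op r}(y)$ is again block diagonal on $H^{\op(s+r)} = H^{\op s} \op H^{\op r}$, and the norm of a direct sum of bounded operators equals the maximum of the norms of the summands; hence $\B L_{s+r}(x \op y) = \max\{\B L_s(x), \B L_r(y)\}$. For axiom (2), given $v, w \in M_s(\cc)$ and $x \in M_s(\C A)$, linearity of $d$ together with $d(c \cd a) = c \cd d(a)$ and $d(a \cd c) = d(a) \cd c$ for scalars $c$ yields entrywise $d^{\op s}(v \cd x \cd w)_{ij} = \sum_{k,l} v_{ik} \cd d(x_{kl}) \cd w_{lj}$, so that $d^{\op s}(v \cd x \cd w) = v \cd d^{\op s}(x) \cd w$ as elements of $M_s(\B L(H))$, the action of scalar matrices being that of \eqref{eq:module}. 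Taking $C^*$-norms and using that the unital inclusion $\cc \hookrightarrow \B L(H)$ induces an isometric embedding $M_s(\cc) \hookrightarrow M_s(\B L(H))$, I obtain $\B L_s(v \cd x \cd w) \leq \| v \| \cd \B L_s(x) \cd \| w \|$.

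Finally I would assemble the pieces. Having verified both axioms, $\B L_D = \{\B L_s\}_{s=1}^\infty$ is an operator seminorm; since it is $*$-invariant with $\B L_1(1_{\C A}) = 0$, the criterion recorded after Lemma \ref{l:matker} shows it is a \emph{Lipschitz} operator seminorm, and lower semicontinuity of each $\B L_s$ gives lower semicontinuity of $\B L_D$ in the sense of Definition \ref{d:opesemi}. I expect no substantial obstacle here: the only points requiring a little care are the domain bookkeeping behind the entrywise identity $[D^{\op s}, a]_{ij} = [D, a_{ij}]$ and the passage from the unbounded commutators to their bounded extensions $d^{\op s}(a)$, both of which are routine.
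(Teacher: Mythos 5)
Your proposal is correct and matches the paper's intended argument: the paper explicitly derives this proposition from Lemma \ref{l:liplip} applied to the stabilized triples $(M_s(\C A), H^{\op s}, D^{\op s})$, "except for a few minor observations," and your verification of the two axioms of Definition \ref{d:opesemi} (via the entrywise/block-diagonal form of $d^{\op s}$ and the fact that scalar matrices commute with $D^{\op s}$) is precisely the routine content those observations are meant to cover.
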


Let $G$ be another separable Hilbert space and consider the essentially selfadjoint unbounded operators
\[
D \ot 1 : \T{dom}(D) \ot G \to H \ot_2 G \q \T{and} \q 1 \ot D : G \ot \T{dom}(D) \to G \ot_2 H,
\]
where we emphasize that the domains are given by algebraic tensor products whereas the codomains are completed Hilbert space tensor products. We denote the closures by $D \hot 1 : \T{dom}(D \hot 1) \to H \ot_2 G$ and $1 \hot D : \T{dom}(1 \hot D) \to G \ot_2 H$, respectively.

\begin{prop}\label{p:stable}
Let $(\C A,H,D)$ be a unital Lipschitz triple and let $\C B \su \B L(G)$ be a unital $*$-subalgebra. The triples $(\C A \ot \C B, H \ot_2 G, D \hot 1)$ and $(\C B \ot \C A, G \ot_2 H, 1 \hot D)$ are unital Lipschitz triples and we have the identities
\[
\B L_{D \hot 1} = \B L_D \ot 1 \q \mbox{and} \q 1 \ot \B L_D = \B L_{1 \hot D}
\]
of operator seminorms on the operator systems $\C A \ot \C B \su \B L(H \ot_2 G)$ and $\C B \ot \C A \su \B L(G \ot_2 H)$, respectively.
\end{prop}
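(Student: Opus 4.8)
The plan is to establish everything for the triple $(\C A \ot \C B, H \ot_2 G, D \hot 1)$ and to obtain the statements for $(\C B \ot \C A, G \ot_2 H, 1 \hot D)$ by the symmetric argument. Since condition $(1)$ of Definition \ref{d:spetri} and the seminorm identity are both linear in the algebra element, it suffices throughout to analyse elementary tensors $a \ot b$ with $a \in \C A$ and $b \in \C B$. First I would check that $(\C A \ot \C B, H \ot_2 G, D \hot 1)$ is a unital Lipschitz triple. The subtlety is that the commutator must be controlled against the closure $D \hot 1$ and not only against $D \ot 1$ on the core $\T{dom}(D) \ot G$. Given $\xi \in \T{dom}(D \hot 1)$, I would choose a sequence $\xi_k \in \T{dom}(D) \ot G$ with $\xi_k \rar \xi$ and $(D \ot 1) \xi_k \rar (D \hot 1) \xi$, which is possible because $D \ot 1$ is essentially selfadjoint on $\T{dom}(D) \ot G$. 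Using that $a$ preserves $\T{dom}(D)$ one has $(a \ot b) \xi_k \in \T{dom}(D) \ot G$ together with
\[
(D \ot 1)(a \ot b) \xi_k = (d(a) \ot b) \xi_k + (a \ot b)(D \ot 1) \xi_k .
\]
Passing to the limit $k \rar \infty$ and invoking the closedness of $D \hot 1$ shows that $(a \ot b) \xi \in \T{dom}(D \hot 1)$ and that $[D \hot 1, a \ot b]$ extends to the bounded operator $d(a) \ot b$ on $H \ot_2 G$. This verifies Definition \ref{d:spetri} $(1)$ and records that $d_{D \hot 1}(a \ot b) = d(a) \ot b$.

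To prove the operator seminorm identity $\B L_{D \hot 1} = \B L_D \ot 1$, I would fix $s \in \nn$ and $z \in M_s(\C A \ot \C B)$. Applying the previous paragraph entrywise to the matrix amplification $(D \hot 1)^{\op s}$, the bounded extension of the commutator takes the form
\[
d_{(D \hot 1)^{\op s}}(z) \in M_s\big( \B L(H) \ot \C B \big),
\]
obtained by replacing each elementary tensor $a \ot b$ occurring in an entry of $z$ by $d(a) \ot b$; in particular $(\B L_{D \hot 1})_s(z) = \| d_{(D \hot 1)^{\op s}}(z) \|$. The key point is that slicing by a matrix state commutes with forming the commutator: for $\varphi \in \T{UCP}_n(\C B)$ one has
\[
(1 \ot \varphi)_s\big( d_{(D \hot 1)^{\op s}}(z) \big) = d_{(D^{\op n})^{\op s}}\big( (1 \ot \varphi)_s(z) \big) .
\]
This is verified on an elementary tensor $a \ot b$, where both sides equal $d(a)^{\op n} \cd \varphi(b)$, and then extended by linearity and amplification; conceptually it holds because $\varphi$ acts only on the $\C B$-leg on which $D$ acts trivially, so passing $\varphi$ through the commutator merely redistributes the scalar entries $\varphi(b)_{kl}$ multiplying the operators $d(a)$. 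Under the identification $I_s$ of Subsection \ref{ss:matsem} the norm of the right-hand side is exactly $(\B L_D)^\varphi_s(z)$ in the notation preceding Lemma \ref{l:ucp}.

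It then remains to combine these ingredients with the minimal tensor product norm formula \eqref{eq:c*norm}, applied to the element $d_{(D \hot 1)^{\op s}}(z)$ with the choices $\C X = \B L(H)$ and $\C Y = \C B$. This formula expresses $\| d_{(D \hot 1)^{\op s}}(z) \|$ as the supremum over $\varphi \in \T{UCP}_\infty(\C B)$ of $\| (1 \ot \varphi)_s( d_{(D \hot 1)^{\op s}}(z) ) \|$; by the slicing identity each of these equals $(\B L_D)^\varphi_s(z)$, and the supremum is by definition $(\B L_D \ot 1)_s(z)$. Comparing with $(\B L_{D \hot 1})_s(z) = \| d_{(D \hot 1)^{\op s}}(z) \|$ gives the desired equality for every $s \in \nn$, and the identity $1 \ot \B L_D = \B L_{1 \hot D}$ follows from the mirror-image argument with the two legs interchanged.

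I expect the main obstacle to be the unbounded operator bookkeeping in the first step, namely the rigorous passage from the commutator on the core $\T{dom}(D) \ot G$ to a genuinely bounded commutator with the closure $D \hot 1$ and with all of its matrix amplifications, together with the care needed to apply the norm formula \eqref{eq:c*norm} to $d_{(D \hot 1)^{\op s}}(z)$, whose entries lie in $\B L(H) \ot \C B$ rather than in $\C A \ot \C B$. The slicing identity, while being the conceptual crux of the computation, is essentially a bookkeeping verification once the commutator has been identified as $d(a) \ot b$ on elementary tensors.
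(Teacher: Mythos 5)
Your proposal is correct and follows essentially the same route as the paper: identify the derivation of the product triple as $d \ot 1$ on elementary tensors, establish the slicing identity $(d_n)_s(1 \ot \varphi)_s(z) = (1 \ot \varphi)_s(d \ot 1)_s(z)$ for $\varphi \in \T{UCP}_n(\C B)$, and conclude via the minimal tensor product norm formula applied to $(d \ot 1)_s(z) \in M_s\big(\B L(H) \ot \C B\big)$. Your closure argument for passing from the core $\T{dom}(D) \ot G$ to $\T{dom}(D \hot 1)$ is a correct elaboration of a step the paper leaves implicit.
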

\begin{proof}
  We focus on proving the identity $L_{(D \hot 1)^{\op s}} = (\B L_D \ot 1)_s$ for every $s \in \nn$ since the remaining claims are either easily established or follow by similar arguments. 

  We recall that the derivation coming from the unital Lipschitz triple $(\C A,H,D)$ is denoted by $d : \C A \to \B L(H)$ so that $d(a)$ agrees with the closure of the commutator $[D,a] : \T{dom}(D) \to H$ for all $a \in \C A$. The derivation $d$ induces a derivation $d \ot 1 : \C A \ot \C B \to \B L(H) \ot \C B$ given by $(d \ot 1)(a \ot b) = d(a) \ot b$ for all $a \in \C A$ and $b \in \C B$.

  Consider now a unital completely positive map $\varphi : \C B \to M_n(\cc)$ for some $n \in \nn$ and notice that we have the identities
  \[
d_n (1 \ot \varphi)(a \ot b)  = d_n\big( a^{\op n} \cd \varphi(b) \big) = d(a)^{\op n} \cd \varphi(b) = (1 \ot \varphi)(d \ot 1)(a \ot b)
\]
for all $a \in \C A$ and $b \in \C B$. For every $s \in \nn$ and $z \in M_s(\C A \ot \C B)$ we therefore obtain that
\begin{equation}\label{eq:dervarphi}
(d_n)_s(1 \ot \varphi)_s(z) = (1 \ot \varphi)_s(d \ot 1)_s(z)
\end{equation}
and, upon consulting the definitions of the operator seminorms $\B L_D^\varphi$ and $\B L_D^{M_n(\C A)}$ from Subsection \ref{ss:mtp} and Subsection \ref{ss:matsem}, it may be concluded that
\begin{equation}\label{eq:dvarphi}
  \begin{split}
    (\B L_D^\varphi)_s(z) & = (\B L_D^{M_n(\C A)})_s\big( (1 \ot \varphi)_s(z) \big)
    = L_{D^{\op s \cd n}}\big( I_s (1 \ot \varphi)_s(z) \big)  \\
  & = \| d_{s \cd n} I_s (1 \ot \varphi)_s(z) \| =   \| (1 \ot \varphi)_s(d \ot 1)_s(z) \|.
  \end{split}
\end{equation}

  On the other hand, we have the derivation $\de : \C A \ot \C B \to \B L(H \ot_2 G)$ coming from the unital Lipschitz triple $(\C A \ot \C B, H \ot_2 G, D \hot 1)$ so that $\de(a \ot b)$ agrees with the closure of the commutator
  \[
[D \ot 1, a \ot b] : \T{dom}(D) \ot G \to H \ot_2 G 
\]
whenever $a \in \C A$ and $b \in \C B$. It follows from these observations that, upon suppressing the inclusion $\B L(H) \ot \C B \su \B L(H \ot_2 G)$, we get the identity $d \ot 1 = \de$. For every $s \in \nn$ and $z \in M_s(\C A \ot \C B)$ we therefore see that
\begin{equation}\label{eq:dhot1}
L_{(D \hot 1)^{\op s}}(z) = \| \de_s(z) \| = \| (d \ot 1)_s(z) \|. 
\end{equation}
Since $(d \ot 1)_s(z) \in M_s\big( \B L(H) \ot \C B\big) \su M_s\big( \B L(H \ot_2 G) \big)$ we obtain from the discussion in Subsection \ref{ss:mtp} that the $C^*$-norm of $(d \ot 1)_s(z)$ can be computed by the formula:
\begin{equation}\label{eq:dersuprem}
\| (d \ot 1)_s(z) \| = \sup\big\{ \| (1 \ot \varphi)_s(d \ot 1)_s(z) \| \mid \varphi \in \T{UCP}_\infty(\C B) \big\} .
\end{equation}

Combining the identities in \eqref{eq:dvarphi}, \eqref{eq:dhot1} and \eqref{eq:dersuprem} with the definition of the operator seminorm $\B L_D \ot 1$, we conclude that
\[
L_{(D \hot 1)^{\op s}}(z) = \sup\big\{ (\B L_D^\varphi)_s(z) \mid \varphi \in \T{UCP}_\infty(\C B) \big\}
= (\B L_D \ot 1)_s(z) 
\]
for all $s \in \nn$ and $z \in M_s(\C A \ot \C B)$.
\end{proof}

\begin{remark}
In the case where $(\C A,H,D)$ is a unital spectral triple and $\C B \su \B L(G)$ is a unital $*$-subalgebra, it need not be true that $(\C A \ot \C B, H \ot_2 G, D \hot 1)$ is a unital spectral triple as well. In fact, it holds that $(\C A \ot \C B, H \ot_2 G, D \hot 1)$ is a unital spectral triple if and only if $G$ is a finite dimensional Hilbert space. 
\end{remark}

\section{Matrix compact quantum metric spaces}
Throughout this section we let $\C X \su A$ be an operator system which is equipped with a Lipschitz operator seminorm $\B L = \{\B L_n\}_{n = 1}^\infty$.

For each $n \in \nn$ we identify the scalar matrices $M_n(\cc)$ with the closed subspace $M_n(\cc \cd 1_{\C X}) \su M_n(\C X)$ and equip the quotient space $M_n(\C X)/M_n(\B C)$ with the quotient norm coming from the $C^*$-norm on $M_n(\C X) \su M_n(A)$. The quotient map is denoted by $[\cd ] : M_n(\C X) \to M_n(\C X) / M_n(\B C)$.

Our aim is now to introduce the notion of a matrix compact quantum metric space. The idea is to adapt the characterization of compact quantum metric spaces in terms of finite dimensional approximations to a matricial setting. It is therefore essential to keep Theorem \ref{t:charcqms} in mind when reading the present section. The relevant adaptation is carried out by requiring that the finite dimensional approximations work simultaneously for all the matrix operator systems, $M_n(\C X)$ for $n \in \nn$. We shall present the definitions here without many explanations and then proceed almost directly with the main examples hoping that these examples will be sufficient to substantiate the new concepts. 

\begin{dfn}\label{d:cfindia}
We say that $(\C X,\B L)$ has \emph{uniformly finite diameter} when there exists a constant $C \in [0,\infty)$ such that
\[
\| [ x] \|_{M_n(\C X)/M_n(\B C)} \leq C \cd \B L_n(x)
\]
for all $n \in \nn$ and $x \in M_n(\C X)$.
\end{dfn}

\begin{dfn}\label{d:cbapprox}
Let $\ep > 0$ be a constant and let $\C Y \su B$ be an operator system. We say that a pair $(\io,\Phi)$ consisting of unital linear maps $\io,\Phi : \C X \to \C Y$ is a \emph{matricial $\ep$-approximation} of $(\C X,\B L)$ when the following holds:
\begin{enumerate}
\item $\io$ is completely isometric and $\Phi$ is completely positive; 
\item the image of $\Phi$ is a finite dimensional subspace of $\C Y$; 
\item $\| \io_n(x) - \Phi_n(x) \| \leq \ep \cd \B L_n(x)$ for all $n \in \nn$ and $x \in M_n(\C X)$.
\end{enumerate}
\end{dfn}

Combining the uniformly finite diameter condition with the existence of arbitrarily precise matricial approximations we arrive at our main definition:

\begin{dfn}\label{d:cbcqms}
We say that $(\C X,\B L)$ is a \emph{matrix compact quantum metric space} when the following holds:
\begin{enumerate}
\item $(\C X,\B L)$ has uniformly finite diameter;
\item for every $\ep > 0$ there exists a matricial $\ep$-approximation of $(\C X,\B L)$.
\end{enumerate}
\end{dfn}

Our first result states that every matrix compact quantum metric space gives rise to a compact quantum metric space by forgetting most of the Lipschitz operator seminorm $\B L = \{ \B L_n\}_{n = 1}^\infty$. The proof follows immediately by an application of Theorem \ref{t:charcqms}. We expect that the converse of the stated implication is false but we are currently unaware of a good counter example. 

\begin{prop}
If $(\C X,\B L)$ is a matrix compact quantum metric space, then $(\C X, \B L_1)$ is a compact quantum metric space.
\end{prop}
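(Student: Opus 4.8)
The plan is to verify the two conditions appearing in part $(3)$ of Theorem \ref{t:charcqms} for the pair $(\C X,\B L_1)$ and then invoke the implication $(3) \rar (1)$ of that theorem. Note first that $\B L_1 : \C X \to [0,\infty)$ is a genuine Lipschitz seminorm, since $\B L$ is a Lipschitz operator seminorm and hence $\B L_1$ is $*$-invariant with $\B L_1(1_{\C X}) = 0$; this is what allows Theorem \ref{t:charcqms} to be applied at all.

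For the finite diameter condition I would simply specialize the uniformly finite diameter hypothesis to the degree $n = 1$. By assumption there is a constant $C \in [0,\infty)$ with $\| [x] \|_{M_n(\C X)/M_n(\cc)} \leq C \cd \B L_n(x)$ for every $n$ and every $x \in M_n(\C X)$; taking $n = 1$ gives exactly $\| [x] \|_{\C X/\cc} \leq C \cd \B L_1(x)$ for all $x \in \C X$, which is the finite diameter condition of Definition \ref{d:bounded}.

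For the approximation condition, let $\ep > 0$ be given and choose a matricial $\ep$-approximation $(\io,\Phi)$ of $(\C X,\B L)$. I would argue that the very same pair of maps, regarded now only in degree one, constitutes an $(\ep,1)$-approximation of $(\C X,\B L_1)$ in the sense of Definition \ref{d:bouapp}. Indeed, $\io$ is completely isometric and therefore isometric, so $\tfrac{1}{1}\cd\| x \| = \| \io(x) \|$, giving condition $(1)$ with the constant $1$; the image of $\Phi$ is finite dimensional, giving condition $(2)$; and condition $(3)$ of Definition \ref{d:cbapprox} read off at $n = 1$ is precisely $\| \io(x) - \Phi(x) \| \leq \ep \cd \B L_1(x)$, which is condition $(3)$ of Definition \ref{d:bouapp}. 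Both maps are unital and bounded, $\io$ being isometric and $\Phi$ being a unital completely positive map, hence completely contractive by the remark following Definition \ref{d:cpcb}.

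The only point that requires a moment's attention — and which is the closest thing to an obstacle here — is that part $(3)$ of Theorem \ref{t:charcqms} demands a single constant $C$ valid for all $\ep$ simultaneously. This is automatic in our situation: the isometry constant produced above is $C = 1$ for every matricial $\ep$-approximation, independently of $\ep$, since $\io$ is always isometric. With the uniform constant $C = 1$ in hand, the hypotheses of Theorem \ref{t:charcqms}$(3)$ are met, and the implication $(3) \rar (1)$ yields that $(\C X,\B L_1)$ is a compact quantum metric space.
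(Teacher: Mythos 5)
Your proof is correct and follows essentially the route the paper intends: the paper simply states that the proposition "follows immediately by an application of Theorem \ref{t:charcqms}", and your argument — specializing the uniformly finite diameter condition to degree one and reading a matricial $\ep$-approximation in degree one as an $(\ep,1)$-approximation (indeed a positive isometric one, since $\io$ is isometric and $\Phi$ is positive) — is exactly that application.
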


The following useful little lemma is a matrix version of the Comparison Lemma from \cite{Rie:MSA}.

\begin{lemma}\label{l:comparison}
  Let $\C Y$ be a sub-operator system of the operator system $\C X \su A$ and let $\B K = \{ \B K_n\}_{n = 1}^\infty$ be a Lipschitz operator seminorm on $\C Y$. Suppose that there exists a constant $E \in (0,\infty)$ such that
    \[
\B L_n(y) \leq E \cd \B K_n(y) \q \mbox{for all } n \in \nn \, \, \mbox{ and } \, \, \, y \in M_n(\C Y) .
\]
The following holds:
\begin{enumerate}
\item If $(\C X,\B L)$ has uniformly finite diameter, then $(\C Y,\B K)$ has uniformly finite diameter.
\item If $(\C X,\B L)$ is a matrix compact quantum metric space, then $(\C Y,\B K)$ is a matrix compact metric space.
\end{enumerate}
\end{lemma}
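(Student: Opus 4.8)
The plan is to obtain everything for $(\C Y,\B K)$ by simply \emph{restricting} the relevant data from $\C X$ to the sub-operator system $\C Y$ and absorbing the comparison constant $E$ into the estimates. The key preliminary observation is that, since $\C Y$ and $\C X$ share the same unit and $\C Y$ carries the $C^*$-norm inherited from $A$, the scalar matrices $M_n(\cc \cd 1_{\C Y})$ and $M_n(\cc \cd 1_{\C X})$ coincide. Consequently, for every $y \in M_n(\C Y) \su M_n(\C X)$ the two quotient norms agree, that is $\| [y]\|_{M_n(\C Y)/M_n(\B C)} = \| [y]\|_{M_n(\C X)/M_n(\B C)}$, since both are computed as the infimum of $\| y - v\|$ over the same set of scalar matrices $v$, with the same inherited $C^*$-norm.

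For part $(1)$ I would chain inequalities directly. If $(\C X,\B L)$ has uniformly finite diameter with constant $C$, then for $y \in M_n(\C Y)$ the coincidence of quotient norms together with the hypothesis $\B L_n(y) \leq E \cd \B K_n(y)$ yields $\| [y]\|_{M_n(\C Y)/M_n(\B C)} = \| [y]\|_{M_n(\C X)/M_n(\B C)} \leq C \cd \B L_n(y) \leq C E \cd \B K_n(y)$. Hence $(\C Y,\B K)$ has uniformly finite diameter with constant $C E$.

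For part $(2)$, uniformly finite diameter follows from part $(1)$, so by Definition \ref{d:cbcqms} it remains to produce matricial $\ep$-approximations of $(\C Y,\B K)$ for each $\ep > 0$. Given $\ep > 0$, I would use the matrix compact quantum metric space structure of $(\C X,\B L)$ to choose a matricial $(\ep/E)$-approximation $(\io,\Phi)$ of $(\C X,\B L)$ with codomain some operator system $\C Z$, and then restrict, setting $\io' := \io|_{\C Y}$ and $\Phi' := \Phi|_{\C Y}$. The three conditions of Definition \ref{d:cbapprox} should then be verified directly: the restriction of a unital completely isometric (respectively completely positive) map to a sub-operator system remains unital and completely isometric (respectively completely positive), because the matricial norm and positivity structure on $M_n(\C Y)$ is the one inherited from $M_n(A)$; the image $\Phi'(\C Y) \su \Phi(\C X)$ is finite dimensional; and for $y \in M_n(\C Y)$ the estimate $\| \io'_n(y) - \Phi'_n(y)\| = \| \io_n(y) - \Phi_n(y)\| \leq (\ep/E) \cd \B L_n(y) \leq \ep \cd \B K_n(y)$ holds by the choice of approximation and the comparison hypothesis. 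Thus $(\io',\Phi')$ is a matricial $\ep$-approximation of $(\C Y,\B K)$, and combining this with the uniformly finite diameter from part $(1)$ completes the proof.

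There is no serious obstacle here; the argument is short and the only delicate points are bookkeeping ones. One must confirm that the quotient norms genuinely coincide on $\C Y$, which rests on the equality of units and the inherited $C^*$-norm, and one must check that restriction to $\C Y$ preserves complete positivity and complete isometry. This last point is precisely where the assumption that $\C Y$ is a \emph{sub}-operator system (rather than an arbitrary subspace) is used, so that matricial positivity and norms on $M_n(\C Y)$ are exactly those inherited from $M_n(A)$. Finally, selecting the approximation at precision $\ep/E$ instead of $\ep$ is what allows the constant $E$ to be absorbed.
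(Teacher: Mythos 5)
Your proposal is correct and follows essentially the same route as the paper: part (1) via the observation that the inclusion $\C Y \hookrightarrow \C X$ induces an isometry on the quotients $M_n(\C Y)/M_n(\cc) \to M_n(\C X)/M_n(\cc)$, and part (2) by precomposing a matricial approximation of $(\C X,\B L)$ with the inclusion and absorbing the constant $E$ by adjusting the precision. The extra care you take in checking that restriction preserves complete isometry and complete positivity is left implicit in the paper but is exactly the right point to verify.
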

\begin{proof}
  The first claim regarding uniformly finite diameter is obvious. Indeed, this follows since the inclusion $i : \C Y \to \C X$ induces an isometry $[i_n] : M_n(\C Y)/M_n(\cc) \to M_n(\C X)/M_n(\cc)$ for all $n \in \nn$.

  To prove the second claim regarding matrix compact quantum metric spaces, notice that if $( \io, \Phi)$ is a matricial $\ep$-approximation of $(\C X,\B L)$ for some $\ep > 0$, then it holds that $(\io \ci i, \Phi \ci i)$ is a matricial $(\ep \cd E)$-approximation of $(\C Y,\B K)$. 
\end{proof}


The concept of a matrix compact quantum metric space makes sense in a noncommutative geometric setting by recalling that every unital spectral triple $(\C A,H,D)$ gives rise to an operator system $\C A \su \B L(H)$ equipped with a lower semicontinuous Lipschitz operator seminorm $\B L_D = \{ L_{D^{\op n}} \}_{n = 1}^\infty$, see Subsection \ref{ss:spemetspa} and Subsection \ref{ss:lipopespe}.

\begin{dfn}\label{d:cbcsps}
We say that a unital spectral triple $(\C A,H,D)$ is a \emph{matrix spectral metric space} when the pair $(\C A,\B L_D)$ is a matrix compact quantum metric space.
\end{dfn}


\subsection{Compact metric spaces}
Let us fix a compact metric space $(M,\rho)$ and consider the operator system $\T{Lip}(M,\cc) \su C(M,\cc)$ consisting of Lipschitz functions sitting inside the unital $C^*$-algebra of continuous functions.

For each $n \in \nn$ we identify the $(n \ti n)$-matrices with entries in $\T{Lip}(M,\cc)$ with the Lipschitz maps from $M$ to $M_n(\cc)$. Similarly, we identify the $(n \ti n)$-matrices with entries in $C(M,\cc)$ with the unital $C^*$-algebra $C(M,M_n(\cc))$. As in Section \ref{s:findimapp} we define the lower semicontinuous Lipschitz seminorm
\[
(\B L_\rho)_n : \T{Lip}(M,M_n(\cc)) \to [0,\infty) \q (\B L_\rho)_n(f) := \sup\big\{ \frac{\| f(p) - f(q) \|}{\rho(p,q)} \mid p \neq q \big\} .
  \]
  It can be verified that the sequence $\B L_\rho := \{ (\B L_\rho)_n\}_{n = 1}^\infty$ also satisfies the two conditions from Definition \ref{d:opesemi} so that we have a lower semicontinuous Lipschitz operator seminorm on the operator system $\T{Lip}(M,\cc) \su C(M,\cc)$. 

\begin{prop}
The pair $\big( \T{Lip}(M,\cc) , \B L_\rho \big)$ is a matrix compact quantum metric space.
\end{prop}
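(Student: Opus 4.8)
The plan is to verify directly the two defining conditions of a matrix compact quantum metric space from Definition~\ref{d:cbcqms}: uniformly finite diameter and the existence of arbitrarily precise matricial $\ep$-approximations. The central observation is that the partition of unity approximation constructed in Lemma~\ref{l:metapp} is manifestly independent of the matrix level $n$, so that a single approximating map handles all the operator systems $M_n\big(\T{Lip}(M,\cc)\big) = \T{Lip}\big(M,M_n(\cc)\big)$ simultaneously.

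For the uniformly finite diameter condition from Definition~\ref{d:cfindia}, I would set $C := \sup\{ \rho(p,q) \mid p,q \in M \}$, which is finite by compactness of $M$. Fixing a basepoint $p_0 \in M$ and viewing the constant matrix $f(p_0) \in M_n(\cc)$ as an element of $M_n(\cc \cd 1_{\T{Lip}(M,\cc)}) \su M_n\big(\T{Lip}(M,\cc)\big)$, one estimates the quotient norm by
\[
\big\| [f] \big\|_{M_n(\T{Lip}(M,\cc))/M_n(\cc)} \leq \| f - f(p_0) \| = \sup_p \| f(p) - f(p_0) \| \leq C \cd (\B L_\rho)_n(f),
\]
using that $(\B L_\rho)_n(f)$ is precisely the Lipschitz constant of $f : M \to M_n(\cc)$. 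Since this holds for all $n \in \nn$ and all $f$, the uniformly finite diameter condition follows.

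For the matricial $\ep$-approximation from Definition~\ref{d:cbapprox}, given $\ep > 0$ I would take $\io := \T{id}$ on $\T{Lip}(M,\cc)$ and let $\Phi_\ep$ be exactly the map produced in Lemma~\ref{l:metapp}, namely $\Phi_\ep(g) = \sum_{j=1}^N g(p_j) \cd \phi_j$, where $\{ \B B_\ep(p_j)\}_{j=1}^N$ is a finite cover of $M$ and $\{\phi_j\}$ a subordinate Lipschitz partition of unity. The identity map is completely isometric; the image of $\Phi_\ep$ lies in the finite dimensional span of $\{\phi_1,\ldots,\phi_N\}$; and $\Phi_\ep$ is positive, hence completely positive because its codomain is an operator system inside the commutative $C^*$-algebra $C(M)$. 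It then remains to check condition~$(3)$ at every matrix level. Applying $\Phi_\ep$ entrywise to $f \in \T{Lip}(M,M_n(\cc))$ gives $(\Phi_\ep)_n(f) = \sum_j f(p_j) \cd \phi_j$ with $f(p_j) \in M_n(\cc)$, and the very computation carried out in the proof of Lemma~\ref{l:metapp} --- which uses only $\sum_j \phi_j = 1$, the support condition that $\phi_j(p) \neq 0$ forces $\rho(p,p_j) < \ep$, and the Lipschitz estimate $\| f(p) - f(p_j) \| \leq \rho(p,p_j) \cd (\B L_\rho)_n(f)$ --- yields $\| f - (\Phi_\ep)_n(f) \| \leq \ep \cd (\B L_\rho)_n(f)$ uniformly in $n$.

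The point worth emphasizing, rather than a genuine obstacle, is that the matrix-valued Lipschitz estimate is governed by the same radius $\ep$ for every $n$ precisely because the cover and the partition of unity are fixed once and for all before the matrix size enters the picture. This uniformity across all matrix levels is exactly what upgrades the scalar statement of Lemma~\ref{l:metapp} to the matricial conclusion, and it is the feature distinguishing a matrix compact quantum metric space from an ordinary compact quantum metric space.
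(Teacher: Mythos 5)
Your proposal is correct and follows essentially the same route as the paper: the same basepoint estimate for uniformly finite diameter, and the same observation that the partition-of-unity map $\Phi_\ep$ from Lemma \ref{l:metapp} is completely positive (its codomain lying in the commutative $C^*$-algebra $C(M)$) and that the Lemma's estimate passes verbatim to every matrix level because the cover and partition of unity are chosen before $n$ enters. The only cosmetic difference is your choice of $C$ as the full diameter rather than the supremum of distances to a fixed basepoint, which changes nothing.
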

\begin{proof}
  Let us first verify that our data has uniformly finite diameter. Choose a point $p \in M$ and define the constant $C := \sup\big\{ \rho(p,q) \mid q \in M \big\}$. We record that $C \in [0,\infty)$ since $M$ is compact by assumption. Let now $n \in \nn$ and $f \in \T{Lip}(M,M_n(\cc))$ be given. 
We notice that
  \[
\| f(q) - f(p) \| \leq \rho(p,q) \cd (\B L_\rho)_n(f) \leq C \cd (\B L_\rho)_n(f)
\]
for all points $q \in M$ and we therefore obtain that 
\[
\begin{split}
  \| [f] \|_{ \T{Lip}(M,M_n(\cc))/M_n(\cc)}
  & \leq \| f - f(p) \| = \sup\big\{ \| f(q) - f(p) \| \mid q \in M \big\} \\
  & \leq C \cd (\B L_\rho)_n(f) .
\end{split}
  \]
  This proves that our pair has uniformly finite diameter.

  Let now $\ep > 0$ be given. We define the unital linear map $\Phi_\ep : \T{Lip}(M,\cc) \to \T{Lip}(M,\cc)$ just as we did in the proof of Lemma \ref{l:metapp} and record that $\Phi_\ep$ is in fact completely positive. The argument we already provided in the proof of Lemma \ref{l:metapp} then shows that the pair $(\T{id},\Phi_\ep)$ is a matricial $\ep$-approximation of our data $\big( \T{Lip}(M,\cc), \B L_\rho\big)$.
\end{proof}

\subsection{Ergodic actions of compact groups}\label{ss:ergodic}
Let us now focus on a compact group $G$ equipped with a length function $\ell : G \to [0,\infty)$. This means that $\ell$ is continuous and compatible with the group structure in the sense that
  \[
\ell(g h) \leq \ell(g) + \ell(h) \, \, , \, \, \, \ell(g^{-1}) = \ell(g)
\]
for all $g,h \in G$. It is moreover required that $\ell(g) = 0$ if and only if $g = e$ where $e$ is notation for the neutral element in the compact group $G$.

On top of the above data we fix a unital $C^*$-algebra $A$ and assume that $\al : G \ti A \to A$ is a strongly continuous action of $G$ by means of $*$-automorphisms of $A$. We denote the fixed point algebra by $A_G \su A$ so that
\[
A_G = \big\{ a \in A \mid \al_g(a) = a \, \, \T{for all } g \in G \big\} .
\]

We let $\C A \su A$ denote the unital $*$-subalgebra of Lipschitz elements with respect to the length function $\ell$ and the action $\al$. This unital $*$-subalgebra is defined by requiring that $a \in \C A$ if and only if $a \in A$ and the subset
\[
\big\{ \| \al_g(a) - a \| /\ell(g) \mid g \neq e \big\} \su [0,\infty)
  \]
  is bounded (or empty). We regard $\C A \su A$ as an operator system and define the lower semicontinuous Lipschitz seminorm $L : \C A \to [0,\infty)$ by putting
  \[
L(a) := \sup\big\{ \| \al_g(a) - a \| /\ell(g) \mid g \neq e \big\} .
\]
We notice that the kernel of $L$ agrees with the fixed point algebra $A_G$.

The following result is due to Rieffel, \cite{Rie:MSA}. It has subsequently been generalized by Li to ergodic actions of coamenable compact quantum groups, see \cite{Li:CQM}. 

\begin{thm}\cite[Theorem 2.3]{Rie:MSA}\label{t:rieerg}
 Suppose that the fixed point algebra $A_G \su A$ is equal to the scalars $\cc \cd 1_A$ (the action $\al$ is ergodic). Then the pair $(\C A, L)$ is a compact quantum metric space.
\end{thm}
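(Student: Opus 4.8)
The plan is to verify condition $(2)$ of Theorem \ref{t:charcqms}: I show that $(\C A,L)$ has finite diameter and that for every $\ep > 0$ there is a positive isometric $\ep$-approximation. Throughout I take the codomain operator system to be $\C A$ itself and the map $\io$ from Definition \ref{d:bouapp} to be the identity $\T{id} : \C A \to \C A$, which is automatically a unital isometry, so condition $(1)$ of Definition \ref{d:bouapp} holds with $C = 1$. Every map below is built by integrating against the normalized Haar measure $dg$ on $G$, exploiting that each $\al_g$ is a $*$-automorphism and hence completely positive, and that $\| a - \al_g(a) \| \leq \ell(g) \cd L(a)$ directly from the definition of $L$.

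For the finite diameter I use the conditional expectation onto the fixed point algebra,
\[
E : \C A \to \cc \cd 1_A \q E(a) := \int_G \al_g(a)\, dg,
\]
which lands in $\cc$ precisely because the action is ergodic. Writing $a - E(a) = \int_G (a - \al_g(a))\, dg$ and estimating under the integral sign gives $\| a - E(a) \| \leq \big(\int_G \ell(g)\, dg\big) \cd L(a)$. Since $\ell$ is continuous on the compact group $G$ the constant $\int_G \ell(g)\, dg$ is finite, and since $\| [a] \|_{\C A/\cc} \leq \| a - E(a) \|$ this establishes uniform control of the diameter.

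The approximations come from convolution with positive trigonometric polynomials concentrated near the neutral element $e$. Given $\ep > 0$ I first choose a continuous $\psi \geq 0$ with $\int_G \psi = 1$ supported in the open neighbourhood $\{ \ell < \ep \}$, so that $\int_G \psi(g)\ell(g)\, dg \leq \ep$. By the Peter--Weyl theorem the representative functions form a dense unital $*$-subalgebra of $C(G)$; approximating $\sqrt{\psi}$ uniformly by such a function $q$ and setting $\phi_\ep := |q|^2 / \int_G |q|^2$ yields a positive trigonometric polynomial with $\int_G \phi_\ep = 1$ and, for $q$ close enough to $\sqrt{\psi}$, with $\int_G \phi_\ep(g)\ell(g)\, dg \leq \ep$. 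I then define
\[
\Phi_\ep : \C A \to \C A \q \Phi_\ep(a) := \int_G \phi_\ep(g)\, \al_g(a)\, dg,
\]
which is unital and completely positive, and the key estimate
\[
\| a - \Phi_\ep(a) \| = \Big\| \int_G \phi_\ep(g)\,(a - \al_g(a))\, dg \Big\| \leq \Big( \int_G \phi_\ep(g)\ell(g)\, dg \Big) \cd L(a) \leq \ep \cd L(a)
\]
verifies condition $(3)$ of Definition \ref{d:bouapp}.

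It remains to see that $\Phi_\ep(\C A)$ is finite dimensional, and this is where ergodicity enters decisively. Decomposing $\C A$ into its spectral subspaces for the $G$-action, convolution against $\phi_\ep$ — whose expansion involves only finitely many irreducible representations — carries $\C A$ into the sum of the corresponding finitely many spectral subspaces, all of whose elements are Lipschitz. By the theorem of H{\o}egh-Krohn, Landstad and St{\o}rmer each spectral subspace is finite dimensional precisely because the action is ergodic, so $\Phi_\ep(\C A) \su \C A$ is finite dimensional. Hence $(\T{id},\Phi_\ep)$ is a positive isometric $\ep$-approximation of $(\C A,L)$, and Theorem \ref{t:charcqms} yields the conclusion. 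The main obstacle is exactly this finite-dimensionality of the spectral subspaces under an ergodic action; the construction of the positive concentrating trigonometric polynomials $\phi_\ep$ is the secondary technical point.
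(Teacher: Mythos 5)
Your argument is correct in outline and is essentially the route the paper takes: the paper only cites Rieffel for this particular statement, but its proof of the matricial upgrade (Theorem \ref{t:ergodic}, together with Proposition \ref{p:ergunifin} and Lemma \ref{l:uppdiff}) follows exactly your scheme --- the conditional expectation onto the fixed point algebra gives finite diameter, and smearing against positive elements of the span $\C E$ of matrix coefficients concentrated near $e$ gives the finite dimensional approximations, with finite dimensionality of the image supplied by \cite[Proposition 2.1]{HLS:CEA}. Your construction of the positive concentrating element $\phi_\ep = |q|^2 / \int |q|^2$, with $q$ a representative function approximating $\sqrt{\psi}$, is a perfectly good way of producing what the paper simply asserts exists.

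The one step you assert without justification is the claim that every element of a spectral subspace $A_\ga$ is Lipschitz, which you need in order to take the codomain of $\Phi_\ep$ to be $\C A$ itself. This is true but not immediate: it amounts to the fact that a continuous finite dimensional unitary representation $u^\ga$ of $G$ satisfies $\| u^\ga(g) - 1 \| \leq C_\ga \cd \ell(g)$ for some constant $C_\ga$ depending only on $\ga$, and proving this requires playing the subadditivity of $\ell$ off against the growth of $\| u^\ga(g)^{k} - 1 \|$ in $k$ (if the ratio blew up along a sequence $g_n \to e$, suitable powers $g_n^{k_n}$ would have $\ell(g_n^{k_n}) \to 0$ while $\| u^\ga(g_n^{k_n}) - 1 \|$ stays bounded below, contradicting continuity of $u^\ga$ at $e$). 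Alternatively you can sidestep the issue entirely, as the paper does: Definition \ref{d:bouapp} allows the codomain to be any operator system, so you may take $\C Y = A$ and let $\io : \C A \to A$ be the inclusion (still a unital isometry), after which $\Phi_\ep : \C A \to A$ need not preserve the Lipschitz algebra. With either repair the proof is complete.
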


The main goal of this subsection is to establish a matricial version of the above theorem. We record that the proof of this enhanced version is essentially the same as Rieffel's proof of \cite[Theorem 2.3]{Rie:MSA}, but for the sake of completeness we go through the relevant details. A more recent treatment, which also covers the quantum group case, can be found in \cite{Rie:CFT}.


First of all, for each $n \in \nn$ we obtain a strongly continuous action of $G$ by means of $*$-automorphisms of $M_n(A)$. Indeed, for each $g \in G$ we may consider the $*$-automorphism $(\al_g)_n : M_n(A) \to M_n(A)$ obtained by applying $\al_g : A \to A$ entry by entry. We thereby obtain the lower semicontinuous and $*$-invariant seminorm
\[
\B L_n : M_n(\C A) \to [0,\infty) \q \B L_n(a) := \sup\big\{ \| (\al_g)_n(a) - a \| /\ell(g) \mid g \neq e \big\} .
  \]
  Notice then that the sequence of seminorms $\B L := \{\B L_n\}_{n = 1}^\infty$ is in fact a lower semicontinuous Lipschitz operator seminorm on the operator system $\C A \su A$.

The notation $\T{ev}_g : C(G) \to \cc$ refers to the state which evaluates at a group element $g \in G$ and we let $\eta : C(G) \to \cc$ denote the Haar state. The strongly continuous action $\al$ can be dualized, yielding the coaction $\de : A \to A \otm C(G)$ which is characterized by the identity
\[
(1 \ot \T{ev}_g)\de(a) = \al_g(a) \q \T{for all } g \in G \T{ and } a \in A .
\]
As a consequence of the invariance properties of the Haar state we get that the composition
\[
E := (1 \ot \eta)\de : A \to A
\]
is a conditional expectation with image equal to the fixed point algebra $A_G \su A$. 
\medskip

Our proof of the matricial version of Theorem \ref{t:rieerg} relies very much on the type of estimates provided by the following lemma:


\begin{lemma}\label{l:uppdiff}
  Let $\varphi : C(G) \to \cc$ be a state. For each $n \in \nn$ and $a \in M_n(\C A)$ we have the inequality
  \[
\big\| a - (1 \ot \varphi)_n \de_n(a)  \big\| \leq \varphi( \ell) \cd \B L_n(a) .
  \]
\end{lemma}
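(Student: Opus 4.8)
The plan is to prove the inequality by reducing to a manageable integral representation of the state $\varphi$ against which we can estimate using the operator seminorm. Let me fix $n \in \nn$ and $a \in M_n(\C A)$. The key observation is that the quantity $(1 \ot \varphi)_n \de_n(a)$ is, morally, an average of the translates $(\al_g)_n(a)$ weighted by the state $\varphi$. Indeed, the coaction is characterized by $(1 \ot \T{ev}_g)\de(a) = \al_g(a)$, so applying the functional $\varphi$ on the $C(G)$-leg should produce a convex-combination-like object $\int_G (\al_g)_n(a) \, d\varphi(g)$, where I interpret $\varphi$ as a Radon probability measure on $G$ via the Riesz representation theorem (recalling that $G$ is compact and $\varphi$ is a state on $C(G)$).

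First I would make this integral representation precise: since $\varphi$ is a state on the commutative $C^*$-algebra $C(G)$, there is a regular Borel probability measure $\mu$ on $G$ with $\varphi(f) = \int_G f \, d\mu$, and in particular $\T{ev}_g$ corresponds to the point mass at $g$. The plan is then to write
\[
a - (1 \ot \varphi)_n \de_n(a) = \int_G \big( a - (\al_g)_n(a) \big) \, d\mu(g),
\]
which requires justifying that the $C(G)$-valued function $\de_n(a)$ can be integrated entrywise against $\mu$ and that evaluating through $\varphi$ commutes with forming this difference. This is the step that needs the most care, since it involves interchanging the application of $\varphi$ (on the $C(G)$-factor of the minimal tensor product $M_n(A) \otm C(G)$) with a vector-valued integral, but it is essentially a standard Bochner-integral or weak-$*$ continuity argument exploiting that $g \mapsto (\al_g)_n(a)$ is norm-continuous because $\al$ is strongly continuous and $a$ lies in the Lipschitz subalgebra.

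Once the integral representation is in hand, the estimate is immediate from the triangle inequality for the integral together with the defining bound for the operator seminorm $\B L_n$. Concretely,
\[
\big\| a - (1 \ot \varphi)_n \de_n(a) \big\| \leq \int_G \big\| a - (\al_g)_n(a) \big\| \, d\mu(g) \leq \int_G \ell(g) \cd \B L_n(a) \, d\mu(g),
\]
where the final inequality uses that $\| (\al_g)_n(a) - a \| \leq \ell(g) \cd \B L_n(a)$ for every $g \neq e$ (and trivially for $g = e$), which is exactly the definition of $\B L_n(a)$. Since $\int_G \ell(g) \, d\mu(g) = \varphi(\ell)$, this gives the claimed bound $\varphi(\ell) \cd \B L_n(a)$.

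I expect the main obstacle to be the rigorous justification of the integral representation, specifically showing that $(1 \ot \varphi)_n \de_n(a)$ genuinely equals the $\mu$-average of the translates. The subtlety lies in the fact that $\de_n(a)$ is an element of $M_n(A) \otm C(G)$ and one must confirm that slicing by $\varphi$ is compatible with regarding this element as a continuous $M_n(A)$-valued function $g \mapsto (\al_g)_n(a)$; this identification rests on the strong continuity of $\al$ and on the characterizing property $(1 \ot \T{ev}_g)\de = \al_g$. Everything after this reduction is a routine application of the triangle inequality and the definition of the operator seminorm, so the conceptual weight of the proof is concentrated entirely in setting up this averaging identity correctly.
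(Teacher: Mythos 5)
Your proof is correct, but it follows a genuinely different route from the paper's. You represent the state $\varphi$ as a Radon probability measure $\mu$ on $G$ via Riesz, identify $\de_n(a)$ with the norm-continuous function $g \mapsto (\al_g)_n(a)$ under $M_n(A) \otm C(G) \cong C(G, M_n(A))$, express the slice map as a Bochner integral $\int_G (\al_g)_n(a)\, d\mu(g)$, and finish with the triangle inequality for vector-valued integrals together with the bound $\| (\al_g)_n(a) - a \| \leq \ell(g) \cd \B L_n(a)$. All of these steps are sound; the interchange you flag as the delicate point is the standard fact that the slice map by a state agrees with the Bochner integral on $C(G, M_n(A))$, since both are continuous linear maps agreeing on elementary tensors. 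The paper instead dualizes: for each $\mu$ in the closed unit ball of the Banach space dual $M_n(A)'$ it forms the scalar function $f_\mu := \mu(a) \cd 1_{C(G)} - (\mu \ot 1)\io^{-1}\de_n(a) \in C(G)$, proves the pointwise bound $|f_\mu(g)| \leq \ell(g) \cd \B L_n(a)$, applies the state to get $|\varphi(f_\mu)| \leq \varphi(|f_\mu|) \leq \varphi(\ell) \cd \B L_n(a)$, and recovers the norm of $a - (1 \ot \varphi)_n\de_n(a)$ as the supremum over such $\mu$. Your approach buys conceptual transparency --- the slice really is an average of translates --- at the cost of setting up vector-valued integration; the paper's duality argument works entirely with scalar-valued continuous functions and the positivity of $\varphi$, sidestepping measure theory altogether, which is also the form of the argument that survives passage to ergodic actions of compact \emph{quantum} groups (cf.\ the cited generalization by Li), where no underlying measure space is available. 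One small imprecision in your write-up: the norm continuity of $g \mapsto (\al_g)_n(a)$ already follows from strong continuity of $\al$ for arbitrary $a \in M_n(A)$; membership in the Lipschitz subalgebra is only needed for the quantitative estimate.
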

\begin{proof}
Let $n \in \nn$ and $a \in M_n(\C A)$ be given.
  
  We consider the Banach space dual $M_n(A)'$ consisting of all the bounded linear functionals from $M_n(A)$ to $\cc$. The Banach space $M_n(A)'$ is equipped with the operator norm and we record that the $C^*$-norm of an element $b \in M_n(A)$ agrees with the supremum
  \[
\| b \| = \sup\big\{ | \mu(b)| \mid \mu \in \ov{\B B}_1(0) \big\} .
\]
Notice here that the closed unit ball $\ov{\B B}_1(0)$ sits inside $M_n(A)'$.
%

Let us consider the $*$-isomorphism $\io : M_n(A) \otm C(G) \to M_n(A \otm C(G))$ given on elementary tensors by $\io(x \ot h)_{ij} = x_{ij} \ot h$ for all $i,j \in \{1,2,\ldots,n\}$, see also \eqref{eq:matrix}. For each linear functional $\mu \in \ov{\B B}_1(0)$, we then define the continuous function $f_\mu \in C(G)$ by the formula
\[
f_\mu := \mu(a) \cd 1_{C(G)} - (\mu \ot 1) \io^{-1} \de_n(a) . 
\]
For each $g \in G$ we then notice that 
\[
\begin{split}
\big| f_\mu(g) \big| & = \big| \mu(a) - \mu\big( (1 \ot \T{ev}_g)_n \de_n(a) \big) \big| \\ 
& = \big| \mu( a ) - \mu\big( (\al_g)_n(a) \big) \big| \leq \| a - (\al_g)_n(a) \| \leq \ell(g) \cd \B L_n(a) .
\end{split}
\]
The above pointwise inequality together with standard properties of states on $C^*$-algebras now yield that
\[
\big| \varphi(f_\mu) \big| \leq \varphi\big( |f_\mu| \big) \leq \varphi( \ell ) \cd \B L_n(a) .
\]
We moreover record the identity  
\[
\varphi(f_\mu) = \mu(a) - \mu \big( (1 \ot \varphi)_n \de_n (a)\big) .
\]

A combination of the above observations yields the desired inequality: 
\[
\big\| a - (1 \ot \varphi)_n \de_n(a) \big) \big\|
= \sup\big\{ |\varphi(f_\mu)| \mid \mu \in \ov{\B B}_1(0) \big\} \leq \varphi(\ell) \cd \B L_n(a) . \qedhere
\]
\end{proof}
    
\begin{prop}\label{p:ergunifin}
Suppose that the fixed point algebra $A_G \su A$ is equal to the scalars $\cc \cd 1_A$. Then the pair $(\C A, \B L)$ has uniformly finite diameter.
\end{prop}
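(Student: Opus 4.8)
The plan is to exploit the conditional expectation $E = (1 \ot \eta)\de : A \to A$ onto the fixed point algebra together with the estimate from Lemma \ref{l:uppdiff} applied with the Haar state $\varphi = \eta$. The key observation is that the ergodicity hypothesis forces the range of $E$ to be the scalars, and at the matrix level this means that the map $E_n := (1 \ot \eta)_n \de_n$ sends $M_n(\C A)$ into the scalar matrices $M_n(\cc) \su M_n(\C A)$ --- which is precisely the subspace we quotient by when measuring $\| [\cd] \|_{M_n(\C A)/M_n(\cc)}$. In other words, $E_n(a)$ will serve as an explicit scalar-matrix approximant to $a$, and Lemma \ref{l:uppdiff} controls how far $a$ is from it in terms of $\B L_n(a)$.

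First I would record that, since $A_G = \cc \cd 1_A$ by the ergodicity assumption, the conditional expectation $E$ has image $\cc \cd 1_A$; applying $E$ entrywise shows that for every $n \in \nn$ the map $E_n = (1 \ot \eta)_n \de_n : M_n(A) \to M_n(A)$ has image contained in $M_n(\cc \cd 1_A) = M_n(\cc)$. In particular $E_n(a) \in M_n(\cc)$ for every $a \in M_n(\C A)$, so $E_n(a)$ is an admissible representative subtracted off in the quotient, and we obtain
\[
\| [a] \|_{M_n(\C A)/M_n(\cc)} \leq \big\| a - E_n(a) \big\| .
\]

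Next I would apply Lemma \ref{l:uppdiff} with the state $\varphi = \eta$ to bound the right hand side by
\[
\big\| a - E_n(a) \big\| = \big\| a - (1 \ot \eta)_n \de_n(a) \big\| \leq \eta(\ell) \cd \B L_n(a) .
\]
Combining the two displays yields $\| [a] \|_{M_n(\C A)/M_n(\cc)} \leq \eta(\ell) \cd \B L_n(a)$ for all $n \in \nn$ and all $a \in M_n(\C A)$. Setting $C := \eta(\ell)$ then gives exactly the uniformly finite diameter condition from Definition \ref{d:cfindia}.

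The only point requiring any care is finiteness and $n$-independence of the constant. Since $\ell : G \to [0,\infty)$ is continuous and $G$ is compact, $\ell$ is bounded, so $\eta(\ell) < \infty$; and because the bound in Lemma \ref{l:uppdiff} holds with the same factor $\eta(\ell)$ for every $n$, the resulting constant $C = \eta(\ell)$ does not depend on $n$. Thus there is essentially no real obstacle beyond correctly identifying that the scalar-valued conditional expectation $E$ stabilizes to a map $E_n$ landing in $M_n(\cc)$ --- the analytic heavy lifting has already been carried out in Lemma \ref{l:uppdiff}, and ergodicity is precisely what collapses the range of $E_n$ onto the scalar matrices.
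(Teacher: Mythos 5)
Your proof is correct and is essentially identical to the paper's own argument: both use the conditional expectation $E_n = (1 \ot \eta)_n \de_n$, whose image lands in $M_n(\cc \cd 1_A)$ by ergodicity, and then apply Lemma \ref{l:uppdiff} with the Haar state to get the bound $\eta(\ell) \cd \B L_n(a)$. Your additional remark on the finiteness and $n$-independence of $\eta(\ell)$ is a harmless elaboration of what the paper leaves implicit.
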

\begin{proof}
  Let $n \in \nn$ and $a \in M_n(\C A)$ be given. Using that $A_G$ is equal to $\cc \cd 1_A$ we know from the discussion before Lemma \ref{l:uppdiff} that $E_n(a) \in M_n(\cc \cd 1_A)$. Applying Lemma \ref{l:uppdiff} we then obtain that
\[
\| [a] \|_{M_n(\C A)/M_n(\cc)} \leq \| a - E_n(a) \| = \big\| a - (1 \ot \eta)_n \de_n(a) \big\| \leq \eta(\ell) \cd \B L_n(a). \qedhere
\]
\end{proof}

In order to construct arbitrarily precise finite dimensional matricial approximations of the pair $(\C A,\B L)$ we need to review a bit of representation theory.

Let us consider the set $\widehat{G}$ of unitary equivalence classes of irreducible unitary representations of $G$. For each $\ga \in \widehat{G}$ we choose a representative $u^\ga : G \to \C U(\cc^{d_\ga})$, where the notation $\C U(\cc^{d_\ga})$ refers to the group of unitary operators on the Hilbert space $\cc^{d_\ga}$. The representative $u^\ga$ is continuous when $\C U(\cc^{d_\ga})$ is equipped with the metric topology coming from the operator norm. We let $\{e_i\}_{i = 1}^{d_\ga}$ denote the standard basis for $\cc^{d_\ga}$ and the corresponding matrix elements are denoted by
\[
u^\ga_{ij} \in C(G) \q u_{ij}^\ga(g) := \inn{e_i, u^{\ga}(g) e_j}
\]
for all $i,j \in \{1,2,\ldots,d_\ga\}$.

For each $\ga \in \widehat{G}$ we define the bounded linear functional
\[
\rho_\ga : C(G) \to \cc \q \rho_\ga(f) := d_\ga \cd \sum_{i = 1}^{d_\ga} \eta\big( \ov{u^\ga_{ii}} \cd f\big) 
\]
and record that the Schur orthogonality relations, \cite[Proposition 5.8]{Fol:AHA}, imply that the bounded operator
\[
P_\ga : A \to A \q P_\ga := (1 \ot \rho_\ga)\de
\]
is an idempotent. The image of $P_\ga$ is called the \emph{spectral subspace} associated with $\ga$ and this subspace is denoted by $A_\ga := P_\ga(A)$. 

For each $\ga \in \widehat{G}$ and $i,j \in \{1,2,\ldots,d_\ga\}$ we shall also consider the bounded linear functional
\[
\eta_{ij}^\ga : C(G) \to \cc \q \eta_{ij}^\ga(f) := \eta( \ov{ u_{ij}^\ga} \cd f) .
\]
Another application of the Schur orthogonality relations then shows that the element
\[
(1 \ot \eta_{ij}^\ga)\de(a) 
\]
belongs to the spectral subspace $A_\ga$ whenever $a \in A$.

The main result of this subsection is now a consequence of the Peter-Weyl theorem, \cite[Theorem 5.11]{Fol:AHA}, together with a result of H\o egh-Krohn, Landstad and St\o rmer stating that ergodicity of the action $\al$ implies that all the spectral subspaces are finite dimensional, \cite[Proposition 2.1]{HLS:CEA}. In fact, for ergodic actions, it holds that $\T{dim}(A_\ga) \leq d_\ga^2$ for all $\ga \in \widehat{G}$.

\begin{thm}\label{t:ergodic}
Suppose that the fixed point algebra $A_G \su A$ is equal to the scalars $\cc \cd 1_A$. Then the pair $(\C A, \B L)$ is a matrix compact quantum metric space.
\end{thm}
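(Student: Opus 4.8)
The uniformly finite diameter requirement of Definition \ref{d:cbcqms}$(1)$ has already been secured in Proposition \ref{p:ergunifin}, so the whole task reduces to producing, for each $\ep > 0$, a matricial $\ep$-approximation in the sense of Definition \ref{d:cbapprox}. The decisive observation is that such an approximation is determined by a \emph{single} pair of degree-one maps, since condition $(3)$ is phrased through the amplifications $\io_n,\Phi_n$; the uniform-in-$n$ estimate of Lemma \ref{l:uppdiff} is then exactly what lets one functional control every matrix level at once. The plan is therefore to take $\io = \T{id} : \C A \to \C A$, which is trivially completely isometric, and to set $\Phi := (1 \ot \varphi)\de : \C A \to \C A$ for a state $\varphi : C(G) \to \cc$ to be chosen. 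Since $\de$ is a $\ast$-homomorphism and $1 \ot \varphi$ is unital completely positive, the composite $\Phi$ is unital and completely positive, giving condition $(1)$ of Definition \ref{d:cbapprox}.

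For the finite dimensionality in condition $(2)$ I would choose $\varphi$ whose density against the Haar state is a trigonometric polynomial. Concretely, if $\varphi(f) = \eta(h \cd f)$ with $h$ a finite linear combination of the functions $\ov{u_{ij}^\ga}$, then rewriting $\varphi = \sum_{\ga \in F}\sum_{i,j} d_{ij}^\ga \cd \eta_{ij}^\ga$ over a finite set $F \su \widehat G$ and invoking the fact, recorded before the statement of Theorem \ref{t:ergodic}, that $(1 \ot \eta_{ij}^\ga)\de(a) \in A_\ga$, shows that the image of $\Phi$ lies in $\bigoplus_{\ga \in F} A_\ga$. By the result of H\o egh-Krohn--Landstad--St\o rmer \cite[Proposition 2.1]{HLS:CEA} each spectral subspace $A_\ga$ is finite dimensional under the ergodicity hypothesis, so this direct sum is finite dimensional; moreover $A_\ga \su \C A$, so $\Phi$ does map $\C A$ into $\C A$ with finite dimensional image.

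The error estimate is then immediate from Lemma \ref{l:uppdiff}: for every $n \in \nn$ and $a \in M_n(\C A)$ one has $\| \io_n(a) - \Phi_n(a)\| = \big\| a - (1 \ot \varphi)_n \de_n(a) \big\| \leq \varphi(\ell) \cd \B L_n(a)$, so condition $(3)$ of Definition \ref{d:cbapprox} holds as soon as $\varphi(\ell) \leq \ep$. Everything thus hinges on the final step, which I expect to be the main obstacle: constructing, for each $\ep > 0$, a \emph{positive, normalized} trigonometric polynomial density $h$ with $\eta(h \cd \ell)$ arbitrarily small. The difficulty is that positivity of $h$, finiteness of its spectral support, and concentration near $e$ must be reconciled simultaneously.

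I would reconcile these as follows. Using the continuity of $\ell$ together with $\ell(e) = 0$, choose a neighborhood $U$ of $e$ on which $\ell < \ep/2$ and a continuous bump $k \geq 0$ supported in $U$ with $\eta(k) = 1$. By the Peter--Weyl theorem \cite[Theorem 5.11]{Fol:AHA} the trigonometric polynomials form a dense $\ast$-subalgebra of $C(G)$, so I can pick a trigonometric polynomial $p$ with $\| p - \sqrt{k}\, \|_\infty$ small. Then $h_0 := p \cd \ov{p}$ is again a trigonometric polynomial, since products of matrix coefficients decompose into finitely many matrix coefficients, and it is manifestly positive; as $\sqrt{k}$ is continuous, $\| h_0 - k\|_\infty$ is small, whence $\eta(h_0)$ is close to $1$ and, using that $\ell$ is bounded on the compact group $G$, $\eta(h_0 \cd \ell)$ is close to $\eta(k \cd \ell) \leq \ep/2$. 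Normalizing $h := h_0 / \eta(h_0)$ yields a state $\varphi = \eta(h \cd \, \cd \,)$ with trigonometric polynomial density and $\varphi(\ell) \leq \ep$, which completes the construction of the matricial $\ep$-approximation and hence the proof.
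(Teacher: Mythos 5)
Your proof is correct and follows essentially the same route as the paper's: uniformly finite diameter is quoted from Proposition \ref{p:ergunifin}, the approximating map is $\Phi = (1 \ot \eta_\psi)\de$ for a positive normalized trigonometric polynomial density $\psi$ with $\eta(\psi \cd \ell) < \ep$, finite dimensionality of its image comes from the spectral subspaces and \cite[Proposition 2.1]{HLS:CEA}, and the uniform-in-$n$ estimate is Lemma \ref{l:uppdiff}; your explicit $p \cd \ov{p}$ construction of $\psi$ simply fills in a step the paper leaves implicit. The only (harmless) deviation is that you take the codomain of $\io$ and $\Phi$ to be $\C A$ itself, which forces the unproved (though true) assertion $A_\ga \su \C A$, whereas the paper sidesteps this by taking $\io$ to be the inclusion of $\C A$ into $A$, as Definition \ref{d:cbapprox} permits.
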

\begin{proof}
  We let $\C E \su C(G)$ denote the operator system defined as the linear span of all the matrix elements of all the irreducible unitary representations of $G$. Remark that the $*$-invariance of $\C E$ follows since the contragredient representation of an irreducible unitary representation is again an irreducible unitary representation. 

  For each positive function $\psi \in \C E$ with $\eta(\psi) = 1$ we define the state
  \[
\eta_\psi : C(G) \to \cc \q \eta_\psi(f) := \eta( \psi \cd f) 
  \]
  together with the associated unital completely positive map $\Phi_\psi := (1 \ot \eta_\psi) \de : A \to A$. Since $\psi$ belongs to $\C E$ we know that $\psi$ is a finite linear combination of matrix elements and we may thus choose a finite non-empty subset $F \su \widehat{G}$ such that the image of $\Phi_\psi$ is contained in the following finite sum of spectral subspaces
  \[
\sum_{\ga \in F} A_\ga \su A .
\]
An application of \cite[Proposition 2.1]{HLS:CEA} therefore shows that the image $\Phi_\psi(A) \su A$ is finite dimensional. For each $n \in \nn$ and $a \in M_n(\C A)$ we moreover get from Lemma \ref{l:uppdiff} that
\[
\big\| a - (\Phi_\psi)_n(a) \big\| \leq \eta( \psi \cd \ell) \cd \B L_n(a) .
\]

Let now $\ep > 0$ be given. We are going to construct a matricial $\ep$-approximation of the pair $(\C A,\B L)$. This suffices to establish the result of the theorem since we already know from Proposition \ref{p:ergunifin} that our data has uniformly finite diameter.  

From the Peter-Weyl theorem we know that the operator system $\C E \su C(G)$ is dense in $C^*$-norm. Since we moreover know that the length function $\ell : G \to [0,\infty)$ is continuous and that $\ell(e) = 0$, we may choose a positive function $\psi_\ep \in \C E$ such that $\eta(\psi_\ep \cd \ell) < \ep$ and $\eta(\psi_\ep) = 1$. Letting $\io : \C A \to A$ denote the inclusion (which is certainly unital and completely isometric), it follows from the above considerations that the pair $(\io, \Phi_{\psi_\ep})$ is a matricial $\ep$-approximation.
\end{proof}

\subsection{Noncommutative tori}
Let us fix a natural number $n \in \nn$ with $n \geq 2$ together with a real skew-symmetric $(n\ti n)$-matrix $\te$. We are interested in the noncommutative torus which at the $C^*$-algebraic level is defined as the universal unital $C^*$-algebra $C(\B T_\te^n)$ generated by $n$ unitary operators $U_1,U_2,\ldots,U_n$ subject to the relations
\[
U_k U_j = e^{2 \pi i \cd \te_{jk}} U_j U_k  \q j,k \in \{1,2,\ldots,n\},
\]
see e.g. \cite{Rie:NTN}. The noncommutative geometry of the noncommutative torus is described by a unital spectral triple $\big( C^\infty(\B T_\te^n), H, D\big)$ of the same parity as $n$ and in a little while we review how this unital spectral triple is constructed, \cite{Con:GCM,CoLa:IID,CoDu:NFM}. Let us however first remark that Rieffel has applied his result on ergodic actions (see Theorem \ref{t:rieerg}) to establish that the above unital spectral triple is a spectral metric space, \cite{Rie:MSA}. We are going to show that $\big( C^\infty(\B T_\te^n), H, D\big)$ is in fact a matrix spectral metric space in the sense of Definition \ref{d:cbcqms}. This result will then be an application of Theorem \ref{t:ergodic}. In the paper \cite{Li:DCM} it is proved that theta deformations of toric spin manifolds give rise to spectral metric spaces. We believe that such general theta deformations do in fact provide examples of matrix spectral metric spaces but the details still need to be verified.

Define the strongly continuous action $\al$ of the $n$-torus on $C(\B T_\te^n)$ by the formula
\[
\al_\la(U_j) = \la_j \cd U_j \q \la = (\la_1,\la_2,\ldots,\la_n) \in \B T^n \, \, , \, \, \, j \in \{1,2,\ldots,n\} .
\]
The unital $*$-algebra of smooth elements $C^\infty(\B T_\te^n) \su C(\B T_\te^n)$ is defined as the corresponding smooth elements for the above action.
Thus, letting $\rho : \B R^n \to \B T^n$ denote the group homomorphism defined by
\[
\rho(t_1,t_2,\ldots,t_n) := (e^{i t_1}, e^{i t_2},\ldots,e^{i t_n})
\]
we have that an element $x \in C(\B T_\te^n)$ belongs to $C^\infty(\B T_\te^n)$ if and only if the map
\begin{equation}\label{eq:smooth}
f_x : \B R^n \to C(\B T_\te^n) \q f_x(t) := \al_{\rho(t)}(x)
\end{equation}
is smooth (instead of merely continuous). Apparently all the generators $U_1,U_2,\ldots,U_n$ are smooth elements and this immediately entails that $C^\infty(\B T_\te^n) \su C(\B T_\te^n)$ is dense in $C^*$-norm.

In order to construct the Hilbert space $H$ (which is part of the unital spectral triple we are describing) we shall also apply the action $\al$. It can be verified that the fixed point algebra agrees with the scalar multiples of the unit so that $\al$ is ergodic. We let $\de : C(\B T_\te^n) \to C(\B T_\te^n) \otm C(\B T^n)$ denote the coaction corresponding to $\al$ and $\eta : C(\B T^n) \to \cc$ denote the Haar state. Identifying the scalars $\cc$ with the subspace $\cc \cd 1_{C(\B T_\te^n)} \su C(\B T_\te^n)$, we obtain the faithful tracial state
\[
\tau : C(\B T_\te^n) \to \cc \q \tau(x) := (1 \ot \eta) \de . 
\]
The notation $H_\tau$ refers to the Hilbert space obtained from $\tau$ via the GNS construction and we record that the corresponding unital $*$-homomorphism $\pi_\tau : C(\B T_\te^n) \to \B L(H_\tau)$ is injective. The associated vector space inclusion of $C(\B T_\te^n)$ into $H_\tau$ is denoted by $\La : C(\B T_\te^n) \to H_\tau$. There are now two cases to consider depending on the parity of $n \geq 2$. For $n = 2m$ or $n = 2m + 1$, we consider the Hilbert space tensor product $H := H_\tau \ot_2 \cc^{2^m}$ and identify $C^\infty(\B T_\te^n)$ with the unital $*$-subalgebra of $\B L(H)$ obtained as the image of $C^\infty(\B T_\te^n)$ via the injective unital $*$-homomorphism $\pi_\tau \ot 1 : C(\B T_\te^n) \to \B L(H_\tau \ot_2 \cc^{2^m})$.

We proceed by defining the Dirac operator $D$ and the grading of the Hilbert space $H$ (for $n$ even). Let $m \in \nn$ and suppose that $n = 2m$ or $n = 2m+1$. Choose $2m+1$ selfadjoint, unitary and mutually anticommuting elements $\ga_1,\ga_2,\ldots,\ga_{2m+1}$ inside the unital $C^*$-algebra $\B L(\cc^{2^m})$. For each $j \in \{1,2,\ldots,n\}$ we consider the derivation
$\pa_j : C^\infty(\B T_\te^n) \to C(\B T_\te^n)$ defined by
\[
\pa_j(x) := \frac{\pa f_x}{\pa t_j}(0,0,\ldots,0)  \q x \in C^\infty(\B T_\te^n),
\]
where the smooth map $f_x : \B R^n \to C(\B T_\te^n)$ was introduced in \eqref{eq:smooth}. It is instructive to notice that $\pa_j(U_k) = i \cd \de_{jk} \cd U_k$ for all $k \in \{1,2,\ldots,n\}$ (with $\de_{jk}$ being the Kronecker delta and $i$ being the imaginary unit).

Upon suppressing the vector space inclusion $\La : C(\B T_\te^n) \to H_\tau$ we obtain an unbounded symmetric operator
\[
\C D_j := -i \cd \pa_j \ot \ga_j : C^\infty(\B T_\te^n) \ot \cc^{2^m} \to H_\tau \ot_2 \cc^{2^m}
\]
for every $j \in \{1,2,\ldots,n\}$. Notice that the symmetry property of $\C D_j$ follows since the faithful tracial state $\tau$ is invariant under the action $\al$, meaning that $\tau \ci \al_\la = \tau$ for all $\la \in \B T^n$. The Dirac operator $D$ is defined as the closure of the sum of these unbounded symmetric operators, so that $D := \ov{ \sum_{j = 1}^n \C D_j}$ has the algebraic tensor product $C^\infty(\B T_\te^n) \ot \cc^{2^m}$ as a core. The selfadjointness of $D$ can be proved by using that $D$ is closed and symmetric and that the Hilbert space $H$ decomposes as an orthogonal direct sum of $D$-invariant finite dimensional subspaces.  

The unital spectral triple associated to the noncommutative torus agrees with the triple $\big( C^\infty(\B T_\te^n), H, D\big)$ where we regard $H$ as a $\zz/2\zz$-graded Hilbert space with grading operator $1 \ot \ga_{2m+1}$ in the case where $n = 2m$.  

In order to show that the above unital spectral triple is a matrix spectral metric space we are going to apply the Comparison Lemma \ref{l:comparison} together with Theorem \ref{t:ergodic}. Our strategy is thus the same as the strategy applied in \cite{Rie:MSA}. However, we need to ensure that the relevant estimates are satisfied independent of the matrix size and we therefore present the main details here.

We equip the $n$-torus with the continuous length function $\ell : \B T^n \to [0,\infty)$ defined by
  \[
\ell(e^{i t_1},e^{i t_2},\ldots,e^{i t_n}) = \sqrt{ t_1^2 + t_2^2 \plp t_n^2}
\]
whenever the exponents $t_1,t_2,\ldots,t_n$ all belong to the half-open interval $(-\pi,\pi]$. Since our strongly continuous action $\al$ is ergodic we are therefore in the setting of Subsection \ref{ss:ergodic}. Letting $\T{Lip}(\B T_\te^n) \su C(\B T_\te^n)$ denote the corresponding unital $*$-subalgebra of Lipschitz elements (coming from the action $\al$ and the length function $\ell$), we obtain from Theorem \ref{t:ergodic} that the pair $\big( \T{Lip}(\B T_\te^n),\B L\big)$ is a matrix compact quantum metric space. Recall in this respect that
\[
\B L_s(x) = \sup\big\{ \| (\al_\la)_s(x) - x \| / \ell(\la) \mid \la \in \B T^n \sem \{ (1,1,\ldots,1) \} \big\}
\]
for all $s \in \nn$ and $x \in M_s\big( \T{Lip}(\B T_\te^n) \big)$.

\begin{lemma}\label{l:smoolip}
We have the inequality
  \[
\B L_s(x) \leq \sqrt{n} \cd L_{D^{\op s}}(x)
  \]
  for all $s \in \nn$ and $x \in M_s\big( C^\infty(\B T_\te^n) \big)$. In particular, it holds that $C^\infty(\B T_\te^n)$ is a sub-operator system of $\T{Lip}(\B T_\te^n)$.
\end{lemma}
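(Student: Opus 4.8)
The plan is to reduce the estimate to two ingredients: an identification of $L_{D^{\op s}}$ in terms of the coordinate derivations $\pa_1,\ldots,\pa_n$, and a mean value representation of the finite difference $(\al_\la)_s(x) - x$ in terms of these same derivations. First I would unwind the commutator. Since $D = \ov{\sum_{j=1}^n -i \pa_j \ot \ga_j}$ and each $a \in C^\infty(\B T_\te^n)$ acts as $\pi_\tau(a) \ot 1$, the Leibniz rule for $\pa_j$ gives $d(a) = [D,a] = \sum_{j=1}^n -i \cd \pi_\tau(\pa_j(a)) \ot \ga_j$, and applying this entrywise yields $d_s(x) = \sum_{j=1}^n -i \cd (\pa_j)_s(x) \ot \ga_j$ for every $x \in M_s\big(C^\infty(\B T_\te^n)\big)$. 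The crucial step is then to exploit the Clifford relations $\ga_j \ga_k + \ga_k \ga_j = 2 \de_{jk}$: since each $\ga_k$ is a selfadjoint unitary, conjugating $d_s(x)$ by the amplified selfadjoint unitary built from $1 \ot \ga_k$ fixes the $k$-th summand and reverses the sign of every summand indexed by $j \neq k$. Averaging $d_s(x)$ with this conjugate isolates the $k$-th term and, using that $\pi_\tau$ is isometric, produces the uniform bound $\| (\pa_j)_s(x) \| \leq \| d_s(x) \| = L_{D^{\op s}}(x)$ for each $j \in \{1,\ldots,n\}$.

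Next I would produce the mean value representation. Writing $\la = \rho(t)$ with $t = (t_1,\ldots,t_n)$ and $t_j \in (-\pi,\pi]$, smoothness of $x$ together with the homomorphism identity $\rho(t + h e_j) = \rho(t)\rho(h e_j)$ gives $\frac{\pa}{\pa t_j}(\al_{\rho(t)})_s(x) = (\al_{\rho(t)})_s\big((\pa_j)_s(x)\big)$. Differentiating the path $u \mapsto (\al_{\rho(ut)})_s(x)$ on $[0,1]$ and integrating then yields
\[
(\al_\la)_s(x) - x = \int_0^1 \sum_{j=1}^n t_j \cd (\al_{\rho(ut)})_s\big( (\pa_j)_s(x)\big) \, du .
\]
Taking norms, using that each $\al_{\rho(ut)}$ is a $*$-automorphism (so its amplification is isometric) and inserting the bound from the previous step gives $\| (\al_\la)_s(x) - x \| \leq \big( \sum_{j=1}^n |t_j| \big) \cd L_{D^{\op s}}(x)$. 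A single application of the Cauchy--Schwarz inequality, $\sum_{j=1}^n |t_j| \leq \sqrt{n} \cd \sqrt{\sum_{j=1}^n t_j^2} = \sqrt{n} \cd \ell(\la)$, supplies the factor $\sqrt{n}$; dividing by $\ell(\la)$ and taking the supremum over $\la \in \B T^n \sem \{(1,\ldots,1)\}$ gives $\B L_s(x) \leq \sqrt{n} \cd L_{D^{\op s}}(x)$. The concluding assertion that $C^\infty(\B T_\te^n)$ is a sub-operator system of $\T{Lip}(\B T_\te^n)$ is then immediate, since the inequality shows $\B L_s(x) < \infty$ for every smooth $x$.

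The main obstacle I expect is the first step: organizing the Clifford-algebra conjugation at the matrix-amplified level so that each individual derivative norm is genuinely dominated by the single commutator norm $L_{D^{\op s}}$, uniformly in the matrix size $s$. Once this lower bound is secured, the mean value representation and the Cauchy--Schwarz estimate are routine and automatically independent of $s$, which is exactly what is needed in order to feed the output into the Comparison Lemma \ref{l:comparison}.
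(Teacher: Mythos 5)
Your proposal is correct and follows essentially the same route as the paper: extract each $\|(\pa_k)_s(x)\|\leq \|d_s(x)\|$ from the Clifford relations (the paper uses the anticommutator $\tfrac12\big((1\ot\ga_k)^{\op s}d_s(x)+d_s(x)(1\ot\ga_k)^{\op s}\big)=-i(\pa_k\ot 1)_s(x)$, which is your conjugation-and-average step multiplied by the unitary $(1\ot\ga_k)^{\op s}$), then apply the fundamental theorem of calculus along the radial path $r\mapsto\al_{\rho(rp)}$ and Cauchy--Schwarz to get the factor $\sqrt{n}$. No gaps; the argument is uniform in $s$ exactly as required.
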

\begin{proof}
Choose an $m \in \nn$ such that $n = 2m$ or $n = 2m + 1$. The notation $d : C^\infty(\B T_\te^n) \to \B L(H_\tau \ot_2 \cc^{2^m})$ refers to the derivation coming from the unital spectral triple $\big( C^\infty(\B T_\te^n), H, D)$. This derivation is given more explicitly by the formula $d(a) = -i \cd \sum_{j = 1}^n \pa_j(a) \ot \ga_j$ for all $a \in C^\infty(\B T_\te^n)$. We are also interested in the derivation $\pa_k \ot 1 : C^\infty(\B T_\te^n) \to \B L(H_\tau \ot_2 \cc^{2^m})$ defined by the formula $(\pa_k \ot 1)(a) := \pa_k(a) \ot 1$ for every $k \in \{1,2,\ldots,n\}$.

  Let $s \in \nn$ and $x \in M_s\big( C^\infty(\B T_\te^n)\big)$ be given. 

  For each $k \in \{1,2,\ldots,n\}$ we consider the matrix of bounded operators $(1 \ot \ga_k)^{\op s} \in M_s\big( \B L(H_\tau \ot_2 \cc^{2^m}) \big)$ (having $1 \ot \ga_k$ repeated $s$ times on the diagonal and zeroes elsewhere). Using the anticommutation relations for our Clifford matrices $\ga_1,\ga_2,\ldots,\ga_n$ we then get that
  \[
\frac{1}{2} (1 \ot \ga_k)^{\op s} \cd d_s(x) + \frac{1}{2} d_s(x) \cd (1 \ot \ga_k)^{\op s} = -i \cd (\pa_k \ot 1)_s(x).
\]
This identity entails that
\begin{equation}\label{eq:partdirac}
\| (\pa_k)_s(x) \| = \| (\pa_k \ot 1)_s(x) \| \leq \| d_s(x) \| = \| L_{D^{\op s}}(x) \|.
\end{equation}

We now define the smooth map $f_x : \B R^n \to M_s\big( C(\B T_\te^n) \big)$ by putting
\[
f_x(t) = (\al_{\rho(t)})_s(x) \q \T{for all } t \in \B R^n .
\]
It then holds that $(\pa_k)_s(x) = \frac{\pa f_x}{\pa t_k}(0,0,\ldots,0)$ for all $k \in \{1,2,\ldots,n\}$.

Let us fix an element $\mu \in \B T^n$ and choose a point $p = (p_1,p_2,\ldots,p_n) \in (-\pi,\pi]^n$ such that $\rho(p) = \mu$. In particular we have that $\ell(\mu) = \sqrt{ \sum_{j = 1}^n p_j^2}$. Define the smooth path $c : \B R \to \B R^n$ by putting $c(r) := r \cd p$. A few straightforward computations then establish the identities
\[
(f_x \ci c)'(r) = ( \al_{ \rho(c(r))})_s\big( (f_x \ci c)'(0) \big) = ( \al_{ \rho(c(r))})_s\big( \sum_{j = 1}^n p_j \cd (\pa_j)_s(x) \big)
\]
for all $r \in \B R$. An application of the fundamental theorem of calculus together with \eqref{eq:partdirac} now shows that
\begin{equation}\label{eq:lengthdirac}
  \begin{split}
\| (\al_{\mu})_s(x) - x \| & = \big\| \int_0^1 (f_x \ci c)'(r) \, dr \big\| \leq \int_0^1 \big\| \sum_{j = 1}^n p_j \cd (\pa_j)_s(x) \big\| \, dr \\
& \leq \sum_{j = 1}^n |p_j| \cd L_{D^{\op s}}(x) \leq \sqrt{n} \cd \ell( \mu) \cd L_{D^{\op s}}(x) .
\end{split}
\end{equation}

We conclude from the inequality in \eqref{eq:lengthdirac} that $\B L_s(x) \leq \sqrt{n} \cd L_{D^{\op s}}(x)$ and the lemma is therefore proved.
%
%
%
\end{proof}

As mentioned earlier, we now obtain the main result of this subsection as a consequence of Theorem \ref{t:ergodic}, the estimates in Lemma \ref{l:smoolip} and the Comparison Lemma \ref{l:comparison}.

\begin{thm}
The unital spectral triple $\big( C^\infty(\B T_\te^n), H, D\big)$ is a matrix spectral metric space.
\end{thm}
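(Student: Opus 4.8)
The plan is to obtain the theorem with essentially no new computation, by feeding the three preceding results into the Comparison Lemma \ref{l:comparison}. The strategy mirrors Rieffel's original argument for the scalar case: the noncommutative torus carries two natural families of seminorms, one built from the length function on $\B T^n$ via the rotation action, the other built from the Dirac operator, and the task is to show that the latter dominates the former uniformly across all matrix levels.

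First I would fix the data for Lemma \ref{l:comparison}. The ambient operator system is $\C X := \T{Lip}(\B T_\te^n)$ equipped with the Lipschitz operator seminorm $\B L = \{\B L_s\}_{s=1}^\infty$ arising from the ergodic action $\al$ and the length function $\ell$, while the sub-operator system is $\C Y := C^\infty(\B T_\te^n)$ equipped with the Dirac operator seminorm $\B L_D = \{L_{D^{\op s}}\}_{s=1}^\infty$. That $C^\infty(\B T_\te^n)$ really is a sub-operator system of $\T{Lip}(\B T_\te^n)$ is recorded in Lemma \ref{l:smoolip}, and $\B L_D$ is a Lipschitz operator seminorm by the proposition of Subsection \ref{ss:lipopespe}.

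Next I would supply the comparison inequality, which is exactly Lemma \ref{l:smoolip}: for all $s \in \nn$ and all $x \in M_s\big(C^\infty(\B T_\te^n)\big)$,
\[
\B L_s(x) \leq \sqrt{n} \cd L_{D^{\op s}}(x) = \sqrt{n} \cd (\B L_D)_s(x),
\]
so the constant $E := \sqrt{n} \in (0,\infty)$ works in Lemma \ref{l:comparison} simultaneously for every matrix size $s$. Since the action $\al$ is ergodic, Theorem \ref{t:ergodic} shows that $(\T{Lip}(\B T_\te^n), \B L)$ is a matrix compact quantum metric space, and then part $(2)$ of Lemma \ref{l:comparison} yields that $(C^\infty(\B T_\te^n), \B L_D)$ is a matrix compact quantum metric space. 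By Definition \ref{d:cbcsps} this is precisely the statement that $\big(C^\infty(\B T_\te^n), H, D\big)$ is a matrix spectral metric space.

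The only genuine difficulty is upstream and has already been handled in Lemma \ref{l:smoolip}: one must pin down a comparison constant that does not grow with the matrix size $s$. I would expect this uniformity to be the heart of the matter, since the whole point of the matricial refinement is that a single constant must control all $M_s$-levels at once. It is secured by extracting each partial derivation from the Dirac commutator through the Clifford anticommutation relation, giving the $s$-independent bound $\|(\pa_k)_s(x)\| \leq L_{D^{\op s}}(x)$, and then integrating along the straight-line path $r \mapsto r \cd p$ in $\B R^n$, so that only the ambient dimension enters, as the factor $\sqrt{n}$.
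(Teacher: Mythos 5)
Your proposal is correct and follows exactly the paper's own argument: the theorem is obtained by combining Theorem \ref{t:ergodic} (ergodicity of the torus action gives a matrix compact quantum metric space structure on $\T{Lip}(\B T_\te^n)$), the $s$-uniform estimate $\B L_s(x) \leq \sqrt{n} \cd L_{D^{\op s}}(x)$ of Lemma \ref{l:smoolip}, and the Comparison Lemma \ref{l:comparison} applied with $E = \sqrt{n}$. You have also correctly identified where the real work lies, namely in the matrix-size-independence of the comparison constant secured via the Clifford anticommutation relations and the fundamental theorem of calculus.
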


\subsection{The Podle\'s sphere}
Let us fix a $q \in (0,1]$ and consider the $C^*$-algebraic version of the quantum group $SU_q(2)$, \cite{Wor:CMP,Wor:TGN}. This compact quantum group is defined as the universal unital $C^*$-algebra $C(SU_q(2))$ with two generators $a$ and $b$ subject to the relations
\[
\begin{split}
& ba = q ab \q b^* a = q a b^* \q bb^* = b^* b \\
& a^* a + q^2 bb^* = 1_{C(SU_q(2))} = aa^* + bb^* .
\end{split}
\]
The corresponding coproduct $\De : C(SU_q(2)) \to C(SU_q(2)) \otm C(SU_q(2))$ is given by the formulae
\[
\De(a) := a \ot a  - q b^* \ot b  \q \De(b) := b \ot a  + a^* \ot b  
\]
and the counit $\epsilon : C(SU_q(2)) \to \cc$ is determined by putting $\epsilon(a) = 1$ and $\epsilon(b) = 0$. Inside the unital $C^*$-algebra $C(SU_q(2))$ we identify the $C^*$-algebraic version of the (standard) Podle\'s sphere $S_q^2$, defined as the smallest unital $C^*$-subalgebra $C(S_q^2) \su C(SU_q(2))$ containing the two elements $A := bb^*$ and $B := a b^*$, \cite{Pod:QS}.

We introduce the coordinate algebra for quantum $SU(2)$, denoted by $\C O(SU_q(2))$, as the smallest unital $*$-subalgebra of $C(SU_q(2))$ such that $a$ and $b$ belong to $\C O(SU_q(2))$. We emphasize that $\C O(SU_q(2))$ is a unital Hopf $*$-algebra with coproduct and counit induced by the corresponding unital $*$-homomorphisms at the $C^*$-algebraic level, see e.g. \cite[Chapter 4]{KlSc:QGR}. The antipode $S : \C O(SU_q(2)) \to \C O(SU_q(2))$ is given on generators by the formulae $S(a) = a^*$ and $S(b) = -q^{-1}b$. The coordinate algebra for the Podle\'s sphere $\C O(S_q^2)$ is defined as the unital $*$-subalgebra of $C(S_q^2)$ generated by $A$ and $B$. The coproduct $\De : \C O(SU_q(2)) \to \C O(SU_q(2)) \ot \C O(SU_q(2))$ induces a coaction of quantum $SU(2)$ on the Podle\'s sphere, which we denote by $\de : \C O(S_q^2) \to \C O(SU_q(2)) \ot \C O(S_q^2)$.

We are interested in the quantum metric information pertaining to the Podle\'s sphere. One way of gaining access to this information is to consider the noncommutative geometry described by the Dabrowski-Sitarz spectral triple, $\big( \C O(S_q^2), H_+ \op H_-, D_q\big)$, which was introduced in \cite{DaSi:DSP}. It has been proved in \cite{AgKa:PSM} that $\big( \C O(S_q^2), H_+ \op H_-, D_q\big)$ is a spectral metric space and we shall now improve this result by showing that the Dabrowski-Sitarz spectral triple is in fact a matrix spectral metric space. Notice that the paper \cite{AgKa:PSM} treats the substantially larger unital $*$-algebra of Lipschitz elements $\T{Lip}_{D_q}(S_q^2)$ instead of the coordinate algebra $\C O(S_q^2)$. We believe that $\big( \T{Lip}_{D_q}(S_q^2), H_+ \op H_-, D_q\big)$ is also a matrix spectral metric space but, for the time being, we leave out the details. 

Instead of reviewing the construction of the Dabrowski-Sitarz spectral triple we recall from \cite[Proposition 3.1]{NeTu:LIQ} that commutators with the Dirac operator $D_q$ give rise to two derivations
\[
\pa_1 \T{ and } \pa_2 : \C O(S_q^2) \to \C O(SU_q(2))
\]
satisfying that $\pa_1(x^*) = - \pa_2(x)^*$ for all $x \in \C O(S_q^2)$. Moreover, these two derivations are described on generators by the formulae
\[
\begin{split}
& \pa_1(A) = -b^* a^* \q \pa_1(B) = (b^*)^2 \q \pa_1(B^*) = -q^{-1} (a^*)^2 \\
& \pa_2(A) = ab \q \pa_2(B) = q^{-1} a^2 \q \pa_2(B^*) = -b^2 .
\end{split}
\]
We may then express the lower semicontinuous Lipschitz operator seminorm $\B L_{D_q} = \{ L_{D_q^{\op s}} \}_{s = 1}^\infty$ coming from the Dabrowski-Sitarz spectral triple in terms of the above derivations. Indeed, for each $s \in \nn$ and $x \in M_s( \C O(S_q^2))$ we have that
\[
L_{D_q^{\op s}}(x) := \max\big\{ \big\| (\pa_1)_s(x) \big\| , \big\| (\pa_2)_s(x) \big\| \big\}.
\]
%

Here below we consider the coordinate algebras $\C O(S_q^2) \su C(S_q^2)$ and $\C O(SU_q(2)) \su C(SU_q(2))$ as operator systems. Notice that the unital complete isometry $\io : M_s\big( \C O(SU_q(2))\big) \ot \C O(S_q^2) \to M_s\big( \C O(SU_q(2)) \ot \C O(S_q^2)\big)$ appearing in the statement of the lemma was introduced in \eqref{eq:matrix}. 

\begin{lemma}\label{l:contraction}
  Let $s \in \nn$ and let $\mu : M_s\big( \C O(SU_q(2)) \big) \to \cc$ be a linear contraction. We have the inequality
  \[
L_{D_q}\big( (\mu \ot 1) \io^{-1} \de_s(x) \big) \leq L_{D_q^{\op s}}(x) \q \mbox{for all } x \in M_s\big(\C O(S_q^2)\big).
\]
\end{lemma}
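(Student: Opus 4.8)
The plan is to unwind the definition $L_{D_q}(y)=\max\big\{\|\pa_1(y)\|,\|\pa_2(y)\|\big\}$ of the degree-one seminorm on $y:=(\mu \ot 1)\io^{-1}\de_s(x)\in \C O(S_q^2)$, and thereby reduce the statement to the two norm estimates $\|\pa_i(y)\|\leq L_{D_q^{\op s}}(x)$ for $i\in\{1,2\}$. The whole argument then rests on one equivariance identity for the derivations, followed by a short chain of elementary norm estimates for the minimal tensor product.

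First I would interchange the slice $\mu \ot 1$ with the derivation $\pa_i$. Writing $\io^{-1}\de_s(x)=\sum_\al V_\al \ot z_\al$ with $V_\al \in M_s(\C O(SU_q(2)))$ and $z_\al \in \C O(S_q^2)$, the functional $\mu$ only produces scalars $\mu(V_\al)$, on which $\pa_i$ is linear, so that $\pa_i(y)=(\mu \ot 1)(1 \ot \pa_i)\io^{-1}\de_s(x)$; that is, $\mu$ acts on the $\C O(SU_q(2))$-leg while $\pa_i$ acts on the $\C O(S_q^2)$-leg and the two operations commute.

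The crux is the equivariance of the derivations with respect to the coaction, namely the identity
\[
(\T{id} \ot \pa_i)\de(w) = \De\big(\pa_i(w)\big) \q \mbox{for all } w \in \C O(S_q^2),
\]
expressing that $\pa_i$ intertwines the left $SU_q(2)$-coaction $\de$ on $\C O(S_q^2)$ with the coproduct $\De$ on $\C O(SU_q(2))$. Conceptually this holds because the derivations are implemented by a right action of the relevant raising and lowering elements of the dual, which commutes with the left translation underlying $\de$ by coassociativity. Concretely I would verify it on the algebra generators $A$, $B$ and $B^*$ from the explicit formulas for $\pa_1$ and $\pa_2$ together with the relations of $C(SU_q(2))$ — for instance for $\pa_1(A)=-b^*a^*$ both sides coincide after reducing with $a^*b^*=q\,b^*a^*$, $ba=q\,ab$ and $a^*a+q^2bb^*=1=aa^*+bb^*$ — and then propagate to all of $\C O(S_q^2)$ using that both $(\T{id} \ot \pa_i)\de$ and $\De\pa_i$ are derivations into the $\C O(SU_q(2))$-bimodule structured through $\de=\De$, so agreement on generators suffices. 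Applying this identity entrywise turns the middle factor of $\pa_i(y)$ into $\De$ applied to $(\pa_i)_s(x)$, giving
\[
\pa_i(y) = (\mu \ot 1)\,\io^{-1}\De_s\big( (\pa_i)_s(x) \big),
\]
where $\io$ now denotes the analogous identification for $\C O(SU_q(2)) \ot \C O(SU_q(2))$ and $\De_s$ is the coproduct applied entrywise. This is the step I expect to be the main obstacle, since it is where all the noncommutative-geometric content of the Dabrowski--Sitarz spectral triple enters; everything else is soft.

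Finally I would close with three standard estimates, writing $u:=(\pa_i)_s(x)\in M_s(\C O(SU_q(2)))$. Since the coproduct $\De : C(SU_q(2)) \to C(SU_q(2)) \otm C(SU_q(2))$ is an injective unital $*$-homomorphism (a left inverse is $\epsilon \ot \T{id}$), its entrywise amplification $\De_s$ is isometric, so $\|\De_s(u)\|=\|u\|$. The identification $\io$ is a unital complete isometry by \eqref{eq:matrix}, whence $\|\io^{-1}\De_s(u)\|=\|u\|$. Lastly, extending $\mu$ to a norm-one functional on $M_s(C(SU_q(2)))$ by Hahn--Banach, the right slice map $\mu \ot \T{id}$ on the minimal tensor product is contractive, so
\[
\|\pa_i(y)\| \leq \|\io^{-1}\De_s(u)\| = \|u\| = \|(\pa_i)_s(x)\| \leq L_{D_q^{\op s}}(x).
\]
Taking the maximum over $i\in\{1,2\}$ yields $L_{D_q}(y) \leq L_{D_q^{\op s}}(x)$, as required.
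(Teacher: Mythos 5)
Your proposal is correct and follows essentially the same route as the paper: the crux in both is the equivariance identity $(1 \ot \pa_i)\de = \De\, \pa_i$ (which the paper simply cites from \cite[Lemma 4.1]{AKK:PSC} rather than re-verifying on generators), followed by the observation that $\De_s$ and the slice $(\mu \ot 1)\io^{-1}$ are contractive. The only cosmetic difference is that you prove slightly more than needed ($\De_s$ isometric rather than merely contractive) and sketch a generator-plus-Leibniz verification of the intertwining identity, where one should be a little careful that the $\pa_i$ satisfy a twisted Leibniz rule.
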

\begin{proof}
  We start out by noting the identities $(1 \ot \pa_1) \de = \De \pa_1$ and $(1 \ot \pa_2) \de = \De \pa_2$, see e.g. \cite[Lemma 4.1]{AKK:PSC}. This entails that 
  \[
\pa_1 (\mu \ot 1) \io^{-1} \de_s = (\mu \ot 1) \io^{-1} (1 \ot \pa_1)_s \de_s = (\mu \ot 1) \io^{-1} \De_s (\pa_1)_s 
\]
and similarly that $\pa_2 (\mu \ot 1) \io^{-1} \de_s = (\mu \ot 1) \io^{-1} \De_s (\pa_2)_s$. Since both $(\mu \ot 1)\io^{-1}$ and $\De_s$ have operator norm less than or equal to one we have proved the present lemma.
\end{proof}

Let $n \in \nn_0$. Before stating our next result we review the definition of the quantum Berezin transform $\be_n : \C O(S_q^2) \to \C O(S_q^2)$ following \cite{AKK:PSC}, but we also refer to \cite{INT:PBD} and \cite{Sain:Thesis} for more general treatments of quantum Berezin transforms. Let $\eta : C(SU_q(2)) \to \cc$ denote the Haar state and define the state $\eta_n : \C O(S_q^2) \to \cc$ by putting $\eta_n(x) := \eta( (a^*)^n x a^n) \cd \sum_{j = 0}^n q^{2j}$ for all $x \in \C O(S_q^2)$. The quantum Berezin transform is then given as the composition
\[
\be_n := (1 \ot \eta_n) \de .
\]
A priori $\be_n$ takes values in $\C O(SU_q(2))$ but it can be verified that the above composition factorizes through $\C O(S_q^2)$. The image of the quantum Berezin transform is computed explicitly in \cite[Lemma 3.7]{AKK:PSC} and this image turns out to be finite dimensional. We moreover remark that $\be_n$ is unital completely positive being the composition of a unital $*$-homomorphism (which extends to $C(S_q^2)$) and a slice map coming from the state $\eta_n$.  

Let $\rho_q : S\big( \C O(S_q^2)\big) \ti S\big( \C O(S_q^2)\big) \to [0,\infty)$ denote the Monge-Kantorovic metric on the state space of $\C O(S_q^2)$ coming from the Dabrowski-Sitarz spectral triple. Let us recall that
  \[
\rho_q( \varphi,\psi) = \sup\big\{ | \varphi(x) - \psi(x)| \mid x \in \C O(S_q^2) \, , \, \, L_{D_q}(x) \leq 1 \big\}
\]
whenever $\varphi$ and $\psi$ are states on the coordinate algebra $\C O(S_q^2)$ (viewed as an operator system inside $C(S_q^2)$). Notice that $\rho_q$ metrizes the weak-$*$-topology on the state space since we already know from \cite[Theorem 8.3]{AgKa:PSM} that $\big( \C O(S_q^2), L_{D_q} \big)$ is a compact quantum metric space. In particular, we know that the distance between two states is never equal to infinity.

The next proposition shows that the identity operator $\T{id} : \C O(S_q^2) \to \C O(S_q^2)$ and the quantum Berezin transform $\be_n : \C O(S_q^2) \to \C O(S_q^2)$ yield a matricial $\ep$-approximation of the pair $\big( \C O(S_q^2), L_{D_q} \big)$ for $\ep = \rho_q(\epsilon,\eta_n)$ (we are here viewing the restriction of the counit as a state on $\C O(S_q^2)$). 

\begin{prop}\label{p:berezin}
It holds that
  \[
\| x - (\be_n)_s(x) \| \leq \rho_q( \epsilon,\eta_n) \cd L_{D_q^{\op s}}(x)
\]
for all $s \in \nn$ and $x \in M_s\big( \C O(S_q^2)\big)$.
\end{prop}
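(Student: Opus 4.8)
The plan is to mimic the proof of Lemma \ref{l:uppdiff}, replacing the crude length-function estimate by the Monge-Kantorovich bound and transporting the Lipschitz control through Lemma \ref{l:contraction}. The starting point is the observation that both the quantum Berezin transform and the identity map arise from the coaction $\de$ by slicing off the second tensor leg with a state. Indeed, since $\de$ is the restriction of the coproduct $\De$ and the counit obeys $(\T{id} \ot \epsilon)\De = \T{id}$, the (entrywise) counit identity gives $x = (1 \ot \epsilon)_s \de_s(x)$ for all $s \in \nn$ and $x \in M_s\big(\C O(S_q^2)\big)$, where $\epsilon$ is the restriction of the counit to $\C O(S_q^2)$. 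By definition $(\be_n)_s(x) = (1 \ot \eta_n)_s \de_s(x)$, so subtracting yields
\[
x - (\be_n)_s(x) = \big(1 \ot (\epsilon - \eta_n)\big)_s \de_s(x).
\]

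Next I would compute the $C^*$-norm of the left hand side by testing against contractive linear functionals. Since the entries of $x - (\be_n)_s(x)$ lie in $\C O(S_q^2) \su \C O(SU_q(2))$ and the inclusions of $C^*$-algebras are isometric, it suffices by Hahn-Banach to let $\mu$ range over the contractions $\mu : M_s\big( \C O(SU_q(2)) \big) \to \cc$, giving
\[
\| x - (\be_n)_s(x) \| = \sup\big\{ | \mu\big( x - (\be_n)_s(x) \big) | \mid \mu \T{ a contraction on } M_s\big( \C O(SU_q(2)) \big) \big\}.
\]
For a fixed such $\mu$ I would introduce the element $w_\mu := (\mu \ot 1)\io^{-1}\de_s(x) \in \C O(S_q^2)$ appearing in Lemma \ref{l:contraction}, with $\io$ the complete isometry from \eqref{eq:matrix}. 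Writing $\io^{-1}\de_s(x) = \sum_k V_k \ot y_k$ with $V_k \in M_s\big( \C O(SU_q(2)) \big)$ and $y_k \in \C O(S_q^2)$, a Fubini-type rearrangement shows that for any functional $\omega$ on $\C O(S_q^2)$ one has $\mu\big( (1 \ot \omega)_s \de_s(x) \big) = \sum_k \omega(y_k) \mu(V_k) = \omega(w_\mu)$. Applying this with $\omega = \epsilon$ and $\omega = \eta_n$ gives $\mu\big( x - (\be_n)_s(x) \big) = \epsilon(w_\mu) - \eta_n(w_\mu)$.

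Finally I would combine the two ingredients. Lemma \ref{l:contraction} shows $L_{D_q}(w_\mu) \leq L_{D_q^{\op s}}(x)$, and the definition of the Monge-Kantorovich metric gives $|\epsilon(w_\mu) - \eta_n(w_\mu)| \leq \rho_q(\epsilon,\eta_n) \cd L_{D_q}(w_\mu)$, the inequality being trivially valid when $L_{D_q}(w_\mu) = 0$ since then $w_\mu$ is scalar and both states agree (recall $\big( \C O(S_q^2), L_{D_q} \big)$ is a compact quantum metric space by \cite[Theorem 8.3]{AgKa:PSM}). Hence
\[
\big| \mu\big( x - (\be_n)_s(x) \big) \big| \leq \rho_q(\epsilon,\eta_n) \cd L_{D_q^{\op s}}(x),
\]
and taking the supremum over $\mu$ proves the proposition. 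I expect the main obstacle to be the bookkeeping in the middle step: one must check that the counit slice really reproduces $x$ and that, under the identification $\io$, slicing the first leg by $\mu$ commutes with slicing the second leg by a state, so that $\mu\big( x - (\be_n)_s(x) \big)$ collapses to a genuine evaluation of $\epsilon - \eta_n$ on the single element $w_\mu$ whose Lipschitz seminorm is controlled by Lemma \ref{l:contraction}.
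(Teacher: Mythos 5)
Your argument is correct and is essentially the paper's own proof: both reduce to testing $x - (\be_n)_s(x)$ against contractive functionals $\mu$ on $M_s\big(\C O(SU_q(2))\big)$, rewrite $\mu\big(x - (\be_n)_s(x)\big)$ as $(\epsilon - \eta_n)$ evaluated on $(\mu \ot 1)\io^{-1}\de_s(x)$, and then invoke Lemma \ref{l:contraction} together with the definition of $\rho_q$. Your extra remarks (the counit identity, the Fubini-type rearrangement, and the degenerate case $L_{D_q}(w_\mu) = 0$) are just details the paper leaves implicit.
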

\begin{proof}
Let $s \in \nn$ and $x \in M_s\big( \C O(S_q^2)\big)$ be given.
 
For every linear contraction $\mu : M_s\big( \C O(SU_q(2))\big) \to \cc$ we compute that
\[
\begin{split}
\mu\big(  x - (\be_n)_s(x) \big) & = \mu (1 \ot (\epsilon - \eta_n))_s \de_s(x) = \mu (1 \ot (\epsilon - \eta_n) ) \io^{-1} \de_s(x) \\
& = (\epsilon - \eta_n) (\mu \ot 1) \io^{-1} \de_s(x) .
\end{split}
\]
But we know from Lemma \ref{l:contraction} that $L_{D_q}\big( (\mu \ot 1) \io^{-1} \de_s(x) \big) \leq L_{D_q^{\op s}}(x)$ and we therefore get that
\[
\big| \mu\big(  x - (\be_n)_s(x) \big) \big| \leq \rho_q( \epsilon, \eta_n) \cd L_{D_q^{\op s}}(x) .
\]

By taking the supremum over all linear contractions from $M_s\big( \C O(SU_q(2))\big)$ to $\cc$ we obtain the result of the proposition.
\end{proof}

\begin{thm}
The unital spectral triple $\big( \C O(S_q^2), H_+ \op H_-, D_q \big)$ is a matrix spectral metric space.
\end{thm}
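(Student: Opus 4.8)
The plan is to verify directly the two defining conditions of a matrix compact quantum metric space (Definition~\ref{d:cbcqms}) for the pair $\big(\C O(S_q^2),\B L_{D_q}\big)$: uniformly finite diameter, and the existence of a matricial $\ep$-approximation for every $\ep > 0$. The engine behind both verifications is Proposition~\ref{p:berezin}, whose crucial feature is that it bounds the defect $\|x - (\be_n)_s(x)\|$ by $\rho_q(\epsilon,\eta_n) \cd L_{D_q^{\op s}}(x)$ with a constant that is \emph{independent} of the matrix size $s \in \nn$. The remaining work is to feed this uniform estimate into the relevant definitions and to arrange that the constants $\rho_q(\epsilon,\eta_n)$ are finite and can be made arbitrarily small.

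For uniformly finite diameter I would specialise to the Berezin transform $\be_0$. Since $\eta_0$ is the restriction of the Haar state $\eta$ to $\C O(S_q^2)$, we have $\be_0 = (1 \ot \eta)\de$, and the right-invariance of the Haar state (equivalently, ergodicity of the coaction $\de$ of $SU_q(2)$ on the Podle\'s sphere) gives $\be_0(x) = \eta(x)\cd 1$ for all $x \in \C O(S_q^2)$. Hence $(\be_0)_s(x) \in M_s(\cc)$ for every $s \in \nn$ and $x \in M_s\big(\C O(S_q^2)\big)$, so that Proposition~\ref{p:berezin} with $n = 0$ yields
\[
\big\| [x] \big\|_{M_s(\C O(S_q^2))/M_s(\cc)} \leq \big\| x - (\be_0)_s(x) \big\| \leq \rho_q(\epsilon,\eta_0) \cd L_{D_q^{\op s}}(x) .
\]
As $\rho_q$ takes only finite values, the constant $C := \rho_q(\epsilon,\eta_0)$ is finite and independent of $s$, which is precisely the uniformly finite diameter condition of Definition~\ref{d:cfindia}.

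For the matricial approximations I would take the pair $(\io,\Phi) := (\T{id},\be_n)$, where $\io : \C O(S_q^2) \to \C O(S_q^2)$ is the identity. The identity is unital and completely isometric, while $\be_n$ is unital completely positive with finite dimensional image by \cite[Lemma 3.7]{AKK:PSC}, so conditions $(1)$ and $(2)$ of Definition~\ref{d:cbapprox} hold; condition $(3)$ is exactly Proposition~\ref{p:berezin}. Thus $(\T{id},\be_n)$ is a matricial $\rho_q(\epsilon,\eta_n)$-approximation of $\big(\C O(S_q^2),\B L_{D_q}\big)$. It remains to make this parameter arbitrarily small, i.e. to show that $\rho_q(\epsilon,\eta_n) \to 0$ as $n \to \infty$. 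Because $\big(\C O(S_q^2),L_{D_q}\big)$ is a compact quantum metric space by \cite[Theorem 8.3]{AgKa:PSM}, the metric $\rho_q$ metrizes the weak-$*$-topology, so this is equivalent to the weak-$*$-convergence $\eta_n \to \epsilon$, which is established in \cite{AgKa:PSM}. This convergence is the one genuinely analytic point and the main obstacle; granting it, for any $\ep > 0$ we may pick $n$ with $\rho_q(\epsilon,\eta_n) < \ep$ and obtain a matricial $\ep$-approximation, completing the verification of Definition~\ref{d:cbcqms}.
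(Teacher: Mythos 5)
Your proposal is correct and follows essentially the same route as the paper: uniformly finite diameter via Proposition~\ref{p:berezin} with $n=0$ (using $\be_0(x)=\eta(x)\cd 1$), and matricial $\ep$-approximations given by $(\T{id},\be_n)$ with $\rho_q(\epsilon,\eta_n)\to 0$. The only cosmetic difference is that the paper quotes $\lim_{n\to\infty}\rho_q(\epsilon,\eta_n)=0$ directly from \cite[Proposition 4.4]{AKK:PSC}, whereas you derive it from weak-$*$-convergence of $\eta_n$ to $\epsilon$ together with the metrization property; both are valid.
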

\begin{proof}
  We need to verify that $\big( \C O(S_q^2), \B L_{D_q} \big)$ has uniformly finite diameter and we need to construct a matricial $\ep$-approximation for every $\ep > 0$.

  The fact that our pair has uniformly finite diameter follows from Proposition \ref{p:berezin} by noting that $\be_0(x) = (1 \ot \eta)\De(x) = \eta(x) \cd 1_{\C O(S_q^2)}$ for all $x \in \C O(S_q^2)$. Indeed, we then get that
  \[
\| [x] \|_{M_s(\C O(S_q^2))/M_s(\cc)} \leq \| x - \eta_s(x)  \| \leq \rho_q(\epsilon,\eta) \cd L_{D_q^{\op s}}(x)
\]
for all $s \in \nn$ and $x \in M_s\big( \C O(S_q^2) \big)$.

Let now $\ep > 0$ be given. From \cite[Proposition 4.4]{AKK:PSC} we get that $\lim_{n \to \infty} \rho_q( \epsilon, \eta_n) = 0$ and we may thus choose an $N \in \nn_0$ such that $\rho_q(\epsilon,\eta_N) \leq \ep$. Since the quantum Berezin transform $\be_N : \C O(S_q^2) \to \C O(S_q^2)$ is unital completely positive and has finite dimensional image (\cite[Lemma 3.6]{AKK:PSC}) we obtain from Proposition \ref{p:berezin} that the pair $\big( \T{id}, \be_N \big)$ is a matricial $\ep$-approximation of $\big( \C O(S_q^2), \B L_{D_q} \big)$.  
\end{proof}

\section{External products of compact quantum metric spaces}
Throughout this section we let $\C X \su A$ and $\C Y \su B$ be operator systems equipped with Lipschitz operator seminorms $\B L = \{\B L_s\}_{s = 1}^\infty$ and $\B K = \{\B K_s\}_{s = 1}^\infty$, respectively.  

The aim of this section is to show that the operator system $\C X \ot \C Y \su A \otm B$ can be given the structure of a matrix compact quantum metric space once we know that $(\C X,\B L)$ and $(\C Y,\B K)$ are matrix compact quantum metric spaces. One may for example apply the maximum of the two Lipschitz operator seminorms $\B L \ot 1$ and $1 \ot \B K$ from Lemma \ref{l:tenslip}, but there are other choices available. 
%

Here below, the operator system $\C Y$ is often (and tacitly) identified with the sub-operator system $\cc 1_{\C X} \ot \C Y$ of $\C X \ot \C Y$ via the unital completely isometric map 
\[
i_{\C Y} : \C Y \to \C X \ot \C Y \q y \mapsto 1_{\C X} \ot y .
\]

%

\begin{lemma}\label{l:whyfac}
  Let $\psi : \C X \to \cc$ be a state. Suppose that the pair $(\C X,\B L)$ has uniformly finite diameter and let $C \in [0,\infty)$ be a constant such that $\| [x] \|_{M_s(\C X)/M_s(\cc)} \leq C \cd \B L_s(x)$ for all $s \in \nn$ and $x \in M_s(\C X)$. Then it holds that
\[
\| z - (\psi \ot 1)_s(z) \| \leq 2 C \cd (\B L \ot 1)_s(z)
\q \mbox{for all } s \in \nn \, \, \mbox{ and } \, \, \, z \in M_s(\C X \ot \C Y) .
\]
\end{lemma}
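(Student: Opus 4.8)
The plan is to exploit the characterisation \eqref{eq:c*norm} of the $C^*$-norm on $M_s(A \otm B)$ in order to compute $\| z - (\psi \ot 1)_s(z) \|$ by probing with the unital completely positive maps $1 \ot \varphi$. For $m \in \nn$ let $\psi_m : M_m(\C X) \to M_m(\cc)$ denote the amplification of $\psi$ applying the state to each entry, and note that $\psi_m$ is unital and contractive since $\psi$ is unital completely positive and hence completely contractive. Fixing $n \in \nn$ and $\varphi \in \T{UCP}_n(\C Y)$, and writing $\tilde w := I_s\big( (1 \ot \varphi)_s(z) \big) \in M_{sn}(\C X)$ for the element obtained via the identification \eqref{eq:matsemi}, the first step is the intertwining identity
\[
I_s (1 \ot \varphi)_s\big( (\psi \ot 1)_s(z) \big) = \psi_{sn}(\tilde w) .
\]
It suffices to verify, on an elementary tensor $x \ot y$, the unamplified identity $(1 \ot \varphi)\big((\psi \ot 1)(x \ot y)\big) = \psi_n\big( (1 \ot \varphi)(x \ot y) \big)$, where a direct computation shows that both sides equal the scalar matrix $\psi(x) \varphi(y) \cd 1_{\C X} \in M_n(\C X)$; here one uses the tacit identification of $(\psi \ot 1)(x \ot y)$ with $1_{\C X} \ot \psi(x) y$ via $i_{\C Y}$. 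Amplifying by $s$ and transporting along $I_s$, which carries the entrywise $\psi_n$ into $\psi_{sn}$, then yields the displayed identity. Since $I_s$ is completely isometric, this reduces the estimate to bounding $\| \tilde w - \psi_{sn}(\tilde w) \|$.

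The heart of the argument is then a matricial ``distance to the scalars'' estimate. For an arbitrary scalar matrix $v \in M_{sn}(\cc)$ I would write
\[
\big\| \tilde w - \psi_{sn}(\tilde w) \big\| \leq \| \tilde w - v \| + \big\| \psi_{sn}(v) - \psi_{sn}(\tilde w) \big\| \leq 2 \cd \| \tilde w - v \|,
\]
using that $\psi_{sn}(v) = v$ for scalar matrices (as $\psi$ is unital) and that $\psi_{sn}$ is contractive. Taking the infimum over $v \in M_{sn}(\cc)$ gives $\| \tilde w - \psi_{sn}(\tilde w) \| \leq 2 \cd \| [\tilde w] \|_{M_{sn}(\C X)/M_{sn}(\cc)}$, and the uniformly finite diameter hypothesis applied in degree $sn$ produces $\| \tilde w - \psi_{sn}(\tilde w) \| \leq 2 C \cd \B L_{sn}(\tilde w)$.

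Finally, the definitions of the stabilised seminorm and of $\B L^\varphi_s$ give $\B L_{sn}(\tilde w) = \B L_s^{M_n(\C X)}\big( (1 \ot \varphi)_s(z) \big) = \B L^\varphi_s(z) \leq (\B L \ot 1)_s(z)$. Combining this with the previous estimates shows that $\big\| (1 \ot \varphi)_s\big( z - (\psi \ot 1)_s(z) \big) \big\| \leq 2 C \cd (\B L \ot 1)_s(z)$ for every $\varphi \in \T{UCP}_\infty(\C Y)$, and taking the supremum over all such $\varphi$ via \eqref{eq:c*norm} yields the claim. I expect the only real obstacle to be bookkeeping rather than analysis: one must track the several amplifications and the identification $I_s$ carefully enough to pin down the intertwining identity, after which the factor $2$ and the constant $C$ emerge directly from the elementary triangle inequality together with the uniformly finite diameter bound.
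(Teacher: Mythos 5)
Your argument is correct and follows the paper's proof essentially verbatim: the key identity $(1\ot\varphi)_s\big((\psi\ot 1)_s(z)\big)=(\psi_n)_s(1\ot\varphi)_s(z)$, the distance-to-scalars estimate with the factor $2$ from contractivity of the amplified state, the uniformly finite diameter bound in degree $s\cd n$ giving $\B L^\varphi_s(z)\leq(\B L\ot 1)_s(z)$, and the final supremum over $\varphi\in\T{UCP}_\infty(\C Y)$ via \eqref{eq:c*norm}. The only cosmetic difference is that you transport everything to $M_{s\cd n}(\C X)$ via $I_s$ before estimating, whereas the paper works in $M_s(M_n(\C X))$ and applies $I_s$ only at the last step; these are equivalent.
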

\begin{proof}
Let $s \in \nn$ and $z \in M_s(\C X \ot \C Y)$ be given. For $n \in \nn$ we consider an arbitrary unital completely positive map $\varphi : \C Y \to M_n(\cc)$. Our task is then to show that
\[
\| (1 \ot \varphi)_s(z) - \varphi_s(\psi \ot 1)_s(z) \| \leq 2 C \cd (\B L \ot 1)_s(z), 
\]
where we recall that the operators on the left hand side lives in $M_s( M_n(\C X))$, see Subsection \ref{ss:mtp}. We first record that $\varphi_s(\psi \ot 1)_s(z) = (\psi_n)_s (1 \ot \varphi)_s(z)$ and we therefore obtain that
\[
\| (1 \ot \varphi)_s(z) - \varphi_s(\psi \ot 1)_s(z) \| \leq 2 \cd \big\| \big[ (1 \ot \varphi)_s(z) \big] \big\|_{M_s(M_n(\C X))/M_s(M_n(\cc))} .
\]
Identifying $M_s( M_n(\C X))$ with $M_{s \cd n}(\C X)$ via the completely isometric isomorphism $I_s$ (see \eqref{eq:matsemi}) and applying our assumption regarding uniformly finite diameter we then see that
\[
\begin{split}
  2 \cd \big\| \big[ (1 \ot \varphi)_s(z) \big] \big\|_{M_s(M_n(\C X))/M_s(M_n(\cc))}
  & \leq 2 C \cd \B L_{s \cd n}\big(  I_s\big( (1 \ot \varphi)_s(z) \big) \big) \\
  & = 2 C \cd \B L_s^\varphi(z) \leq 2C \cd (\B L \ot 1)_s(z) .
  \end{split}
\]
This ends the proof of the present lemma.
\end{proof}

\begin{prop}\label{p:tensfindia}
Let $\B M$ be a Lipschitz operator seminorm on $\C X \ot \C Y \su A \otm B$. Suppose that $(\C X,\B L)$ and $(\C Y,\B K)$ both have uniformly finite diameter and that there exists a constant $D \geq 0$ such that
\[
(\B L \ot 1)_s( z) , (1 \ot \B K)_s(z) \leq D \cd \B M_s(z) \q \mbox{for all } s \in \nn \, \, \mbox{ and } \, \, \, z \in M_s(\C X \ot \C Y).
\]
Then $(\C X \ot \C Y, \B M)$ has uniformly finite diameter. 
\end{prop}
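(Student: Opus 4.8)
The plan is to fix states on $\C X$ and $\C Y$ and use them to build a single state on the tensor product that projects each coordinate down to scalars, combining this with the two estimates coming from Lemma~\ref{l:whyfac}. Concretely, I would choose states $\psi : \C X \to \cc$ and $\chi : \C Y \to \cc$, which exist since $\C X$ and $\C Y$ are operator systems. Let $C_{\C X}, C_{\C Y} \in [0,\infty)$ denote the uniformly finite diameter constants for $(\C X,\B L)$ and $(\C Y,\B K)$, so that $\| [x] \|_{M_s(\C X)/M_s(\cc)} \leq C_{\C X} \cd \B L_s(x)$ and similarly for $\C Y$. The element $(\psi \ot \chi)_s(z) = \psi_s(1 \ot \chi)_s(z)$ lands in $M_s(\cc)$ (after the appropriate identification $M_s(\cc \cd (1_{\C X} \ot 1_{\C Y})) \cong M_s(\cc)$), so subtracting it from $z$ controls the quotient norm $\| [z] \|_{M_s(\C X \ot \C Y)/M_s(\cc)}$.

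The key step is a triangle inequality splitting $z - (\psi \ot \chi)_s(z)$ into two pieces. First I would write
\[
z - (\psi \ot \chi)_s(z) = \big( z - (\psi \ot 1)_s(z) \big) + \big( (\psi \ot 1)_s(z) - (\psi \ot \chi)_s(z) \big).
\]
The first summand is estimated directly by Lemma~\ref{l:whyfac}, yielding the bound $2 C_{\C X} \cd (\B L \ot 1)_s(z)$. For the second summand, note that $(\psi \ot 1)_s(z) \in M_s(\cc \cd 1_{\C X} \ot \C Y)$, which I identify with an element $w \in M_s(\C Y)$ via the completely isometric map $i_{\C Y}$; under this identification the operation $z \mapsto (\psi \ot \chi)_s(z)$ becomes $w \mapsto (\chi)_s(w)$, so applying the symmetric version of Lemma~\ref{l:whyfac} (with the roles of $\C X$ and $\C Y$ interchanged, using the seminorm $1 \ot \B K$) gives a bound of the form $2 C_{\C Y} \cd (1 \ot \B K)_s(z)$. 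Here I would want to verify the compatibility $(1 \ot \B K)_s\big( (\psi \ot 1)_s(z) \big) \leq (1 \ot \B K)_s(z)$, which should follow because slicing by the state $\psi$ is a unital completely positive operation and $1 \ot \B K$ is an operator seminorm satisfying the contractivity property of Definition~\ref{d:opesemi}~$(2)$.

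Assembling the two pieces and invoking the hypothesis $(\B L \ot 1)_s(z), (1 \ot \B K)_s(z) \leq D \cd \B M_s(z)$, I obtain
\[
\| [z] \|_{M_s(\C X \ot \C Y)/M_s(\cc)} \leq \| z - (\psi \ot \chi)_s(z) \| \leq 2(C_{\C X} + C_{\C Y}) D \cd \B M_s(z),
\]
which is exactly the uniformly finite diameter condition for $(\C X \ot \C Y, \B M)$ with constant $2(C_{\C X}+C_{\C Y})D$. The main obstacle I anticipate is the bookkeeping in the second summand: one must check carefully that applying $\psi \ot 1$ to $z$ does not increase the $1 \ot \B K$ seminorm and that the identifications with $M_s(\C Y)$ intertwine the slice map $\chi$ correctly with the quotient-norm estimate from uniform finite diameter of $(\C Y, \B K)$. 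Once the symmetric form of Lemma~\ref{l:whyfac} is correctly set up on the $\C Y$-factor, the remaining computation is routine.
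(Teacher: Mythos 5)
Your proposal is correct and follows essentially the same route as the paper: the same decomposition of $z - \chi_s(\psi \ot 1)_s(z)$ into two summands, Lemma \ref{l:whyfac} for the first, and the uniformly finite diameter of $(\C Y,\B K)$ applied to the slice $(\psi \ot 1)_s(z) \in M_s(\C Y)$ for the second, yielding the constant $2(C_{\C X}+C_{\C Y})D$. The one inaccuracy is your justification of $\B K_s\big((\psi \ot 1)_s(z)\big) \leq (1 \ot \B K)_s(z)$: this is not a consequence of Definition \ref{d:opesemi} $(2)$ (which concerns multiplication by scalar matrices, not slices by unital completely positive maps) but is immediate from the definition of $(1 \ot \B K)_s$ as a supremum containing the term $\B K_s^\psi(z) = \B K_s\big((\psi \ot 1)_s(z)\big)$.
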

\begin{proof}
Since $(\C X,\B L)$ and $(\C Y,\B K)$ both have uniformly finite diameter we may choose constants $C_L \geq 0$ and $C_K \geq 0$ such that
\[
\| [x] \|_{M_s(\C X) / M_s(\cc) } \leq C_L \cd \B L_s(x) \, \, \T{ and } \, \, \,
\| [y] \|_{M_s(\C Y)/ M_s(\cc)} \leq C_K \cd \B K_s(y)
\]
for all $s \in \nn$, $x \in M_s(\C X)$ and $y \in M_s(\C Y)$. 

Let $\psi : \C X \to \cc$ and $\varphi : \C Y \to \cc$ be states. For each $s \in \nn$ and $z \in M_s(\C X \ot \C Y)$ we apply Lemma \ref{l:whyfac} and estimate that
\[
\begin{split}
& \| [z] \|_{ M_s(\C X \ot \C Y) / M_s(\B C) } \leq \| z - \varphi_s(\psi \ot 1)_s(z) \| \\
& \q \leq \| z - (\psi \ot 1)_s(z) \| + \| (\psi \ot 1)_s(z) - \varphi_s(\psi \ot 1)_s(z)  \| \\
& \q \leq 2 C_L \cd (\B L \ot 1)_s(z) + 2 \cd \big\| \big[ (\psi \ot 1)_s( z ) \big] \big\|_{M_s(\C Y)/M_s(\B C)}  \\
& \q \leq 2 C_L \cd (\B L \ot 1)_s(z) + 2 C_K \cd \B K_s\big( (\psi \ot 1)_s( z) \big) \\
& \q \leq 2 C_L \cd (\B L \ot 1)_s(z) + 2 C_K \cd (1 \ot \B K)_s(z) \leq 2 (C_L + C_K) D \cd \B M_s( z) .
\end{split}
\]
These inequalities prove the proposition. 
\end{proof}

We are now ready to investigate finite dimensional matricial approximations of the operator system $\C X \ot \C Y \su A \otm B$ equipped with the maximum of the two Lipschitz operator seminorms $\B L \ot 1$ and $1 \ot \B K$.

%
%

\begin{prop}\label{p:tensapprox}
Let $\ep_{\C X} > 0$ and $\ep_{\C Y} > 0$ be constants and suppose that $(\io_{\C X},\Phi_{\C X})$ is a matricial $\ep_{\C X}$-approximation of $(\C X,\B L)$ and that $(\io_{\C Y},\Phi_{\C Y})$ is a matricial $\ep_{\C Y}$-approximations of $(\C Y,\B K)$. Suppose moreover that $\B M$ is a Lipschitz operator seminorm on the algebraic tensor product $\C X \ot \C Y \su A \otm B$ and that $D > 0$ is a constant such that
\[
(\B L \ot 1)_s(z) , (1 \ot \B K)_s(z) \leq D \cd \B M_s(z) \q \mbox{for all } s \in \nn \, \, \mbox{ and } \, \, \, z \in M_s(\C X \ot \C Y) .
\]
Then $(\io_{\C X} \ot \io_{\C Y}, \Phi_{\C X} \ot \Phi_{\C Y})$ is a matricial $(D\ep_{\C X} + D\ep_{\C Y})$-approximation of $(\C X \ot \C Y,\B M)$.
\end{prop}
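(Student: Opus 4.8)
The plan is to verify directly the three defining conditions of a matricial $\ep$-approximation from Definition \ref{d:cbapprox} for the pair $(\io_{\C X} \ot \io_{\C Y}, \Phi_{\C X} \ot \Phi_{\C Y})$ with $\ep = D\ep_{\C X} + D\ep_{\C Y}$. Let $\C X' \su A'$ and $\C Y' \su B'$ denote the operator systems serving as codomains for $(\io_{\C X},\Phi_{\C X})$ and $(\io_{\C Y},\Phi_{\C Y})$, so that both tensor maps send $\C X \ot \C Y$ into $\C X' \ot \C Y' \su A' \otm B'$. Conditions $(1)$ and $(2)$ are immediate from functoriality of the minimal tensor product: since $\io_{\C X}$ and $\io_{\C Y}$ are unital complete isometries, injectivity of the minimal tensor product gives that $\io_{\C X} \ot \io_{\C Y}$ is again a unital complete isometry; since $\Phi_{\C X}$ and $\Phi_{\C Y}$ are unital completely positive, so is $\Phi_{\C X} \ot \Phi_{\C Y}$; and the image of $\Phi_{\C X} \ot \Phi_{\C Y}$ lies in the finite dimensional subspace $\Phi_{\C X}(\C X) \ot \Phi_{\C Y}(\C Y)$.

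The substance of the argument is condition $(3)$. First I would insert a telescoping term and apply the triangle inequality, so that for $s \in \nn$ and $z \in M_s(\C X \ot \C Y)$,
\[
\big\| (\io_{\C X} \ot \io_{\C Y})_s(z) - (\Phi_{\C X} \ot \Phi_{\C Y})_s(z) \big\| \leq \big\| \big( (\io_{\C X} - \Phi_{\C X}) \ot \io_{\C Y} \big)_s(z) \big\| + \big\| \big( \Phi_{\C X} \ot (\io_{\C Y} - \Phi_{\C Y}) \big)_s(z) \big\|.
\]
It then suffices to bound the first summand by $D\ep_{\C X} \cd \B M_s(z)$ and the second by $D\ep_{\C Y} \cd \B M_s(z)$.

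For the first summand I would factor $(\io_{\C X} - \Phi_{\C X}) \ot \io_{\C Y} = (\T{id}_{\C X'} \ot \io_{\C Y}) \ci \big( (\io_{\C X} - \Phi_{\C X}) \ot \T{id}_{\C Y} \big)$. Since $\T{id}_{\C X'} \ot \io_{\C Y}$ is a complete isometry (again by injectivity), the norm is unchanged upon dropping it, reducing the task to estimating $\big\| \big( (\io_{\C X} - \Phi_{\C X}) \ot \T{id}_{\C Y} \big)_s(z) \big\|$ in $M_s(\C X' \ot \C Y)$. Here I would invoke the first description of the $C^*$-norm in \eqref{eq:c*norm}, applied with $\C X'$ in place of $\C X$, expressing this norm as the supremum over $\varphi \in \T{UCP}_\infty(\C Y)$ of $\big\| (1 \ot \varphi)_s \big( (\io_{\C X} - \Phi_{\C X}) \ot \T{id}_{\C Y} \big)_s(z) \big\|$. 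The key computation is the commutation identity $(1 \ot \varphi) \ci \big( (\io_{\C X} - \Phi_{\C X}) \ot \T{id}_{\C Y} \big) = (\io_{\C X} - \Phi_{\C X})_n \ci (1 \ot \varphi)$ for $\varphi \in \T{UCP}_n(\C Y)$, which holds on elementary tensors because $\io_{\C X} - \Phi_{\C X}$ acts entrywise while the scalar matrix $\varphi(y)$ merely multiplies. Transporting through the completely isometric identification $I_s$ of \eqref{eq:matsemi}, applying condition $(3)$ of the matricial $\ep_{\C X}$-approximation in degree $s \cd n$, and recognizing $\B L_{s \cd n}\big( I_s (1 \ot \varphi)_s(z) \big)$ as $\B L^\varphi_s(z)$ then yields the bound by $\ep_{\C X} \cd \B L^\varphi_s(z) \leq \ep_{\C X} \cd (\B L \ot 1)_s(z) \leq D\ep_{\C X} \cd \B M_s(z)$ after taking the supremum over $\varphi$.

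The second summand is handled symmetrically with the roles of the two factors exchanged: I would factor $\Phi_{\C X} \ot (\io_{\C Y} - \Phi_{\C Y}) = (\Phi_{\C X} \ot \T{id}_{\C Y'}) \ci \big( \T{id}_{\C X} \ot (\io_{\C Y} - \Phi_{\C Y}) \big)$, this time using that $\Phi_{\C X} \ot \T{id}_{\C Y'}$ is unital and completely positive, hence completely contractive (as noted after Definition \ref{d:cpcb}), so that it may be discarded at the cost of an inequality. The remaining factor is then estimated via the second description of the $C^*$-norm in \eqref{eq:c*norm} through slice maps $\psi \ot 1$ for $\psi \in \T{UCP}_\infty(\C X)$, the analogous commutation identity, condition $(3)$ of the $\ep_{\C Y}$-approximation, and the identification with $\B K^\psi_s(z)$; this gives the bound by $\ep_{\C Y} \cd (1 \ot \B K)_s(z) \leq D\ep_{\C Y} \cd \B M_s(z)$. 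Combining the two summands establishes condition $(3)$ with constant $D\ep_{\C X} + D\ep_{\C Y}$. I expect the main obstacle to be organizational rather than conceptual: carefully tracking the commutation relations between the slice maps and the one-sided tensor maps across the reindexing isomorphism $I_s$, and matching the outcome precisely to the stabilized seminorms $\B L^\varphi_s$ and $\B K^\psi_s$ underlying the definitions of $\B L \ot 1$ and $1 \ot \B K$.
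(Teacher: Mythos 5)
Your proposal is correct and follows essentially the same route as the paper: the same telescoping decomposition, the same reduction of each summand to a one-sided difference $(\io_{\C X}-\Phi_{\C X})\ot 1$ resp.\ $1\ot(\io_{\C Y}-\Phi_{\C Y})$ using complete isometry/contractivity of the other tensor factor, and the same estimate of these via the slice-map description \eqref{eq:c*norm} of the $C^*$-norm, the commutation of the slice maps past the entrywise-acting difference, the reindexing isomorphism $I_s$, and the degree-$(s\cdot n)$ instance of condition $(3)$ of the given matricial approximations, landing on $\B L^\varphi_s$ and $\B K^\psi_s$. No gaps.
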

\begin{proof}
  We prove condition $(3)$ from Definition \ref{d:cbapprox} since the other two conditions are straightforward to establish. Let $s \in \nn$ and $z \in M_s(\C X \ot \C Y)$ be given. For $m \in \nn$ and an arbitrary unital completely positive map $\psi : \C X \to M_m(\cc)$ we first record that
  \[
(\psi \ot 1)_s(1 \ot (\io_{\C Y} - \Phi_{\C Y}))_s(z) = \big( (\io_{\C Y} - \Phi_{\C Y} )_m \big)_s (\psi \ot 1)_s(z) .
  \]
  Hence, applying the completely isometric isomorphism $I_s$ between $M_s(M_m(\C Y))$ and $M_{s \cd m}(\C Y)$ (see \eqref{eq:matsemi}), we obtain that
  \begin{equation}\label{eq:psitens}
  \begin{split}
    \big\| (\psi \ot 1)_s(1 \ot (\io_{\C Y} - \Phi_{\C Y}))_s(z) \big\|
    & = \big\| (\io_{\C Y} - \Phi_{\C Y})_{s \cd m} I_s (\psi \ot 1)_s(z) \big\| \\
    & \leq \ep_{\C Y} \cd \B K_{s \cd m}\big( I_s (\psi \ot 1)_s(z) \big) \\
    & = \ep_{\C Y} \cd \B K_s^\psi(z) \leq \ep_{\C Y} \cd (1 \ot \B K)_s(z) .
  \end{split}
\end{equation}

We conclude from \eqref{eq:psitens} and the definition of the $C^*$-norm on $M_s(\C X \ot \C Y) \su M_s(A \otm B)$ that
\[
\big\| (1 \ot (\io_{\C Y} - \Phi_{\C Y}))_s(z) \big\| \leq \ep_{\C Y} \cd (1 \ot \B K)_s(z) .
\]
A similar computation shows that $\big\| ((\io_{\C X} - \Phi_{\C X}) \ot 1)_s(z) \big\| \leq \ep_{\C X} \cd (\B L \ot 1)_s(z)$. We may then estimate as follows:
\[
\begin{split}
& \big\| (\io_{\C X} \ot \io_{\C Y})_s(z) - (\Phi_{\C X}  \ot \Phi_{\C Y} )_s(z) \big\| \\
  & \q \leq \big\| \big( (\io_{\C X} - \Phi_{\C X} ) \ot \io_{\C Y}  \big)_s(z) \big\|
  + \big\| \big(\Phi_{\C X} \ot (\io_{\C Y} - \Phi_{\C Y} )\big)_s(z) \big\| \\
& \q \leq \big\| \big( (\io_{\C X} - \Phi_{\C X}) \ot 1\big)_s(z) \big\| + \big\| \big(1 \ot (\io_{\C Y} - \Phi_{\C Y} )_s\big)(z) \big\|  \\
& \q \leq \ep_{\C X} \cd (\B L \ot 1)_s(z)  + \ep_{\C Y} \cd (1 \ot \B K)_s(z) 
\leq (D \ep_{\C X} + D \ep_{\C Y}) \cd \B M_s(z) .
\end{split}
\]
This proves the proposition. 
\end{proof}

The next theorem, which can be seen as the main result of this paper, can now be proved by applying Proposition \ref{p:tensfindia} and Proposition \ref{p:tensapprox}. 

\begin{thm}\label{t:extquamet}
Suppose that $(\C X,\B L)$ and $(\C Y,\B K)$ are matrix compact quantum metric spaces and suppose that $\B M$ is a Lipschitz operator seminorm on $\C X \ot \C Y$ and that $D \geq 0$ is a constant such that
\[
(\B L \ot 1)_s(z) , (1 \ot \B K)_s(z) \leq D \cd \B M_s(z) \q \mbox{for all } s \in \nn \, \, \mbox{ and } \, \, \, z \in M_s(\C X \ot \C Y) .
\]
Then $(\C X \ot \C Y,\B M)$ is a matrix compact quantum metric space. 
\end{thm}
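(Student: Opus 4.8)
The plan is to assemble the theorem directly from Proposition \ref{p:tensfindia} and Proposition \ref{p:tensapprox} together with the definition of a matrix compact quantum metric space (Definition \ref{d:cbcqms}); the analytic substance has already been packaged into those two propositions, so the present argument is essentially organizational. First I would reduce to the case $D > 0$. Since enlarging $D$ preserves both domination inequalities $(\B L \ot 1)_s(z) \leq D \cd \B M_s(z)$ and $(1 \ot \B K)_s(z) \leq D \cd \B M_s(z)$, I may replace $D$ by $\max\{D,1\}$ and thus assume without loss of generality that $D > 0$. This small preliminary matters only because the choice of approximation accuracies below involves dividing by $D$.

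Next I would verify condition $(1)$ of Definition \ref{d:cbcqms}, namely uniformly finite diameter. By hypothesis $(\C X,\B L)$ and $(\C Y,\B K)$ are matrix compact quantum metric spaces, so each has uniformly finite diameter. Since $\B M$ is a Lipschitz operator seminorm dominating both $\B L \ot 1$ and $1 \ot \B K$ in the prescribed sense, the hypotheses of Proposition \ref{p:tensfindia} are met verbatim, and that proposition yields that $(\C X \ot \C Y, \B M)$ has uniformly finite diameter.

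Finally I would produce arbitrarily precise matricial approximations, i.e. condition $(2)$ of Definition \ref{d:cbcqms}. Given $\ep > 0$, set $\ep_{\C X} := \ep_{\C Y} := \frac{\ep}{2D}$. Because $(\C X,\B L)$ and $(\C Y,\B K)$ are matrix compact quantum metric spaces, Definition \ref{d:cbcqms} supplies a matricial $\ep_{\C X}$-approximation $(\io_{\C X},\Phi_{\C X})$ of $(\C X,\B L)$ and a matricial $\ep_{\C Y}$-approximation $(\io_{\C Y},\Phi_{\C Y})$ of $(\C Y,\B K)$. Proposition \ref{p:tensapprox} then shows that $(\io_{\C X} \ot \io_{\C Y}, \Phi_{\C X} \ot \Phi_{\C Y})$ is a matricial $(D\ep_{\C X} + D\ep_{\C Y})$-approximation of $(\C X \ot \C Y, \B M)$, and $D\ep_{\C X} + D\ep_{\C Y} = \ep$. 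Hence for every $\ep > 0$ there exists a matricial $\ep$-approximation of $(\C X \ot \C Y, \B M)$, and combining this with the uniformly finite diameter established above gives the theorem.

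I do not expect a genuine obstacle here: the hard parts — the factorization estimate of Lemma \ref{l:whyfac} feeding into Proposition \ref{p:tensfindia}, and the $C^*$-norm formula \eqref{eq:c*norm} underlying the approximation estimate of Proposition \ref{p:tensapprox} — have already been dealt with. The only point requiring any care is the division by $D$ when selecting $\ep_{\C X}$ and $\ep_{\C Y}$, which is precisely why I would first arrange $D > 0$.
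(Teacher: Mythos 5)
Your proposal is correct and follows exactly the route the paper indicates: the paper proves this theorem by citing Proposition \ref{p:tensfindia} for the uniformly finite diameter and Proposition \ref{p:tensapprox} for the matricial approximations, which is precisely your argument. Your explicit handling of the degenerate case $D = 0$ and the choice $\ep_{\C X} = \ep_{\C Y} = \ep/(2D)$ are the only details the paper leaves implicit, and both are handled correctly.
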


\begin{remark}
Under the general assumptions applied in this section, it always holds that the sequence of seminorms $\B M = \{\B M_s\}_{s = 1}^\infty$ defined by
\[
\B M_s(z) := \max\{ (\B L \ot 1)_s(z), (1 \ot \B K)_s(z) \}
\]
for all $s \in \nn$ and $z \in M_s(\C X \ot \C Y)$ is a Lipschitz operator seminorm on $\C X \ot \C Y$. This is a consequence of Lemma \ref{l:tenslip}. It then follows from Theorem \ref{t:extquamet} that if $(\C X,\B L)$ and $(\C Y,\B K)$ are matrix compact quantum metric spaces, then this holds true for $(\C X \ot \C Y, \B M)$ as well. As we shall see in the next section, there could be other interesting Lipschitz operator seminorms on $\C X \ot \C Y$ satisfying the condition in Theorem \ref{t:extquamet}.
\end{remark}

We end this section by stating a slightly stronger result than the above Theorem \ref{t:extquamet}. The proof of this result is almost identical to the proof of Theorem \ref{t:extquamet} and there is therefore no need to go through the details here. We recall the definition of the Lipschitz operator seminorms $\B L \hot 1$ and $1 \hot \B K$ from the paragraph before Lemma \ref{l:tensmax}. 

\begin{thm}
Suppose that $(\C X,\B L)$ and $(\C Y,\B K)$ are matrix compact quantum metric spaces and suppose that $\B M$ is a Lipschitz operator seminorm on the sub-operator system $\C D(\B L \hot 1) \cap \C D(1 \hot \B K) \su X \otm Y$ and that $D \geq 0$ is a constant such that
\[
(\B L \hot 1)_s(z) , (1 \hot \B K)_s(z) \leq D \cd \B M_s(z) 
\]
for all $s \in \nn$ and $z \in M_s\big(\C D(\B L \hot 1) \cap \C D(1 \hot \B K)\big)$. Then the pair $\big(\C D(\B L \hot 1) \cap \C D(1 \hot \B K),\B M\big)$ is a matrix compact quantum metric space. 
\end{thm}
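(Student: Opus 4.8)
The plan is to mirror the two-step proof of Theorem \ref{t:extquamet}: first establish that $\big(\C D(\B L \hot 1) \cap \C D(1 \hot \B K),\B M\big)$ has uniformly finite diameter, and then produce a matricial $\ep$-approximation for every $\ep > 0$, thereby verifying the two conditions of Definition \ref{d:cbcqms}. Throughout I would abbreviate the intersection domain by $\C Z := \C D(\B L \hot 1) \cap \C D(1 \hot \B K)$ and repeatedly exploit that, by the very definition of the two domains, $(\psi \ot 1)_s(z) \in M_s(M_m(\C Y))$ and $(1 \ot \varphi)_s(z) \in M_s(M_n(\C X))$ whenever $z \in M_s(\C Z)$, $\psi \in \T{UCP}_m(\C X)$ and $\varphi \in \T{UCP}_n(\C Y)$, so that the seminorms $\B K$ and $\B L$ may legitimately be applied to these elements with finite values. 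This is precisely the point where passing to the intersection $\C Z$ rather than to either $\C D(\B L \hot 1)$ or $\C D(1 \hot \B K)$ alone becomes indispensable.

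First I would record the analogue of Lemma \ref{l:whyfac} on the enlarged domain: for a state $\psi : \C X \to \cc$ and $z \in M_s(\C Z)$ one has $\| z - (\psi \ot 1)_s(z)\| \leq 2 C \cd (\B L \hot 1)_s(z)$, where $C$ is the uniform diameter constant of $(\C X,\B L)$. The proof is verbatim that of Lemma \ref{l:whyfac}: fix $\varphi \in \T{UCP}_n(\C Y)$, use the commutation identity $\varphi_s (\psi \ot 1)_s(z) = (\psi_n)_s (1 \ot \varphi)_s(z)$, bound the difference by twice the quotient norm of $[(1 \ot \varphi)_s(z)]$ in $M_s(M_n(\C X))/M_s(M_n(\cc))$, identify $M_s(M_n(\C X))$ with $M_{s \cd n}(\C X)$ via \eqref{eq:matsemi}, invoke uniformly finite diameter of $(\C X,\B L)$, and take the supremum over all $n$ and $\varphi$ using that the $C^*$-norm formula \eqref{eq:c*norm} persists at the completed level $M_s(X \otm Y)$. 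With this in hand, the uniformly finite diameter of $(\C Z,\B M)$ follows exactly as in Proposition \ref{p:tensfindia}: for states $\psi$ on $\C X$ and $\varphi$ on $\C Y$ I would estimate $\|[z]\|$ by $\|z - (\psi \ot 1)_s(z)\| + \|(\psi \ot 1)_s(z) - \varphi_s(\psi \ot 1)_s(z)\|$, bound the first term by the lemma and the second by $2 C_K \cd (1 \hot \B K)_s(z)$, and conclude using $(\B L \hot 1)_s, (1 \hot \B K)_s \leq D \cd \B M_s$.

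For the matricial approximations I would fix matricial $\ep_{\C X}$- and $\ep_{\C Y}$-approximations $(\io_{\C X},\Phi_{\C X})$ and $(\io_{\C Y},\Phi_{\C Y})$ of $(\C X,\B L)$ and $(\C Y,\B K)$, and form their minimal tensor products. Since a unital complete isometry (resp.\ a unital completely positive map) extends to a unital complete isometry (resp.\ unital completely positive map) on the minimal tensor product, the maps $\io_{\C X} \otm \io_{\C Y}$ and $\Phi_{\C X} \otm \Phi_{\C Y}$ are defined on all of $X \otm Y$, and I would take their restrictions to $\C Z$ as the candidate $(\io_{\C X} \hot \io_{\C Y}, \Phi_{\C X} \hot \Phi_{\C Y})$. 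Complete isometry of the first and complete positivity of the second are immediate, and the image of the second lies in the finite-dimensional space $\Phi_{\C X}(\C X) \ot \Phi_{\C Y}(\C Y)$, giving conditions $(1)$ and $(2)$ of Definition \ref{d:cbapprox}. Condition $(3)$ is handled as in Proposition \ref{p:tensapprox}: for $z \in M_s(\C Z)$ and $\psi \in \T{UCP}_m(\C X)$ one computes $(\psi \ot 1)_s (1 \ot (\io_{\C Y} - \Phi_{\C Y}))_s(z) = \big((\io_{\C Y} - \Phi_{\C Y})_m\big)_s (\psi \ot 1)_s(z)$, applies $I_s$ and the approximation bound for $(\io_{\C Y},\Phi_{\C Y})$ to reach $\ep_{\C Y} \cd \B K_s^\psi(z)$, and takes the supremum over $m$ and $\psi$ to obtain $\| (1 \ot (\io_{\C Y} - \Phi_{\C Y}))_s(z)\| \leq \ep_{\C Y} \cd (1 \hot \B K)_s(z)$; symmetrically $\|((\io_{\C X} - \Phi_{\C X}) \ot 1)_s(z)\| \leq \ep_{\C X} \cd (\B L \hot 1)_s(z)$. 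A triangle inequality together with $(\B L \hot 1)_s, (1 \hot \B K)_s \leq D \cd \B M_s$ yields a matricial $(D\ep_{\C X} + D\ep_{\C Y})$-approximation of $(\C Z,\B M)$, and choosing $\ep_{\C X},\ep_{\C Y}$ with $D\ep_{\C X} + D\ep_{\C Y} \leq \ep$ completes the argument.

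The step I expect to require the most care is not any single estimate, each of which is a transcription of the corresponding computation for $\C X \ot \C Y$, but rather the bookkeeping guaranteeing that every intermediate element lands in the correct matrix operator system so that the operator seminorms apply. Concretely, one must check that $1 \ot \varphi$ and $\psi \ot 1$ extend to $X \otm Y$, that they carry $M_s(\C Z)$ into $M_s(M_n(\C X))$ resp.\ $M_s(M_m(\C Y))$ with finite $\B L$ and $\B K$ values, and that the $C^*$-norm supremum formula \eqref{eq:c*norm} remains valid on the complete operator system $M_s(X \otm Y)$. All of these are encoded in the defining conditions of $\C D(\B L \hot 1)$ and $\C D(1 \hot \B K)$ and in Lemma \ref{l:tensmax}, so once these are invoked the argument runs in parallel with the $\C X \ot \C Y$ case.
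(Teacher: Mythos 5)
Your proposal is correct and follows exactly the route the paper intends: the paper itself omits the details, stating only that the argument is almost identical to that of Theorem \ref{t:extquamet}, and your write-up is a faithful transcription of Proposition \ref{p:tensfindia}, Lemma \ref{l:whyfac} and Proposition \ref{p:tensapprox} to the enlarged domain $\C D(\B L \hot 1) \cap \C D(1 \hot \B K)$, with the right emphasis on the bookkeeping (membership of the slices $(1 \ot \varphi)_s(z)$ and $(\psi \ot 1)_s(z)$ in the matrix operator systems over $\C X$ and $\C Y$, and the persistence of the norm formula \eqref{eq:c*norm} on the completed tensor product) that makes the intersection of the two domains the correct setting.
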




\section{External products of spectral metric spaces}
The aim of this section is to apply our main Theorem \ref{t:extquamet} to show that the external product of two matrix spectral metric spaces is again a matrix spectral metric space. We let $(\C A_1,H_1,D_1)$ and $(\C A_2,H_2,D_2)$ be unital spectral triples and, in the case where $(\C A_i,H_i,D_i)$ is even for some $i \in \{1,2\}$, we denote the corresponding $\zz/2\zz$-grading operator on the separable Hilbert space $H_i$ by $\ga_i : H_i \to H_i$.

Let us first review the construction of the external product of our two unital spectral triples. The main reference for this construction is Baaj and Julg, \cite{BaJu:TBK}, but we remark that Baaj and Julg work in the much more general context of unbounded Kasparov modules. The external product of unital spectral triples recovers the external product in analytic $K$-homology via the bounded transform and, more generally, this relationship still holds between the external product of unbounded Kasparov modules and the external product in Kasparov's $KK$-theory, \cite{BaJu:TBK,Kuc:PUM,HiRo:AKH,Kas:OFE}.  

There are four different cases of the external product corresponding to the different combinations of parities of $(\C A_i,H_i,D_i)$, $i \in \{1,2\}$. In all four cases the external product takes the form $( \C A_1 \ot \C A_2, H, D_1 \ti D_2 )$, where the Hilbert space $H$ depends on the combination of parities and the selfadjoint unbounded operator $D_1 \ti D_2 : \T{dom}(D_1 \ti D_2) \to H$ is referred to as the \emph{unbounded product operator}.  

We specify the other ingredients in the external product here below, arranged after parity combinations. Unless explicitly mentioned we shall view the algebraic tensor product $\C A_1 \ot \C A_2$ as a unital $*$-subalgebra of the bounded operators on the Hilbert space tensor product $H_1 \ot_2 H_2$.

{\bf Even times even:} We define $H := H_1 \ot_2 H_2$ and equip this Hilbert space with the $\zz/2\zz$-grading operator $\ga_1 \ot \ga_2 : H_1 \ot_2 H_2 \to H_1 \ot_2 H_2$. The unbounded product operator $D_1 \ti D_2$ then agrees with the closure of the odd symmetric unbounded operator
\[
D_1 \ot 1 + \ga_1 \ot D_2 : \T{dom}(D_1) \ot \T{dom}(D_2) \to H_1 \ot_2 H_2 .
\]

{\bf Odd times odd:} We put $H := (H_1 \ot_2 H_2)^{\op 2}$ and equip this direct sum of Hilbert spaces with the $\zz/2\zz$-grading operator $\ga := \ma{cc}{1 & 0 \\ 0 & -1} : H \to H$. The algebraic tensor product $\C A_1 \ot \C A_2$ is then viewed as a unital $*$-subalgebra of the bounded operators on $H$ via the diagonal representation $x \mapsto \ma{cc}{x & 0 \\ 0 & x}$. The unbounded product operator $D_1 \ti D_2$ is defined as the closure of the odd symmetric unbounded operator
\[
\ma{cc}{0 & D_1 \ot 1 + i \ot D_2 \\ D_1 \ot 1 - i \ot D_2 & 0} : \big( \T{dom}(D_1) \ot \T{dom}(D_2) \big)^{\op 2} \to (H_1 \ot_2 H_2)^{\op 2} .
\]

{\bf Odd times even and even times odd:} In both of these two cases, the relevant Hilbert space agrees with the Hilbert space tensor product $H := H_1 \ot_2 H_2$ without any grading. In the odd times even case, we define the unbounded product operator $D_1 \ti D_2$ as the closure of the symmetric unbounded operator
\[
D_1 \ot \ga_2 + 1 \ot D_2 : \T{dom}(D_1) \ot \T{dom}(D_2) \to H_1 \ot_2 H_2 .
\]
In the even times odd case, the unbounded product operator $D_1 \ti D_2$ agrees with the closure of the symmetric unbounded operator
\[
D_1 \ot 1 + \ga_1 \ot D_2 : \T{dom}(D_1) \ot \T{dom}(D_2) \to H_1 \ot_2 H_2 .
\]



%
%
%
%

\begin{thm}
If $(\C A_1,H_1,D_1)$ and $(\C A_2,H_2,D_2)$ are matrix spectral metric spaces, then the external product $(\C A_1 \ot \C A_2, H, D_1 \ti D_2)$ is a matrix spectral metric space.
\end{thm}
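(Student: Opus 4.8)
The plan is to reduce everything to Theorem~\ref{t:extquamet} applied to the operator systems $\C A_1 \su \B L(H_1)$ and $\C A_2 \su \B L(H_2)$ equipped with their canonical Lipschitz operator seminorms $\B L_{D_1}$ and $\B L_{D_2}$. By hypothesis $(\C A_1, \B L_{D_1})$ and $(\C A_2, \B L_{D_2})$ are matrix compact quantum metric spaces. By Baaj--Julg~\cite{BaJu:TBK} the external product $(\C A_1 \ot \C A_2, H, D_1 \ti D_2)$ is again a unital spectral triple, hence in particular a unital Lipschitz triple, so (Subsection~\ref{ss:lipopespe}) the sequence $\B M := \B L_{D_1 \ti D_2} = \{ L_{(D_1 \ti D_2)^{\op s}} \}_{s = 1}^\infty$ is a lower semicontinuous Lipschitz operator seminorm on $\C A_1 \ot \C A_2$. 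In view of Definition~\ref{d:cbcsps} it then only remains to show that $(\C A_1 \ot \C A_2, \B M)$ is a matrix compact quantum metric space, and by Theorem~\ref{t:extquamet} this follows once I establish the comparison estimate
\[
(\B L_{D_1} \ot 1)_s(z) , \, (1 \ot \B L_{D_2})_s(z) \leq L_{(D_1 \ti D_2)^{\op s}}(z)
\]
for all $s \in \nn$ and $z \in M_s(\C A_1 \ot \C A_2)$; that is, the whole proof reduces to this inequality with comparison constant $D = 1$.

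To prove the estimate I would unwind its left-hand side through Proposition~\ref{p:stable}, which identifies $(\B L_{D_1} \ot 1)_s$ with $L_{(D_1 \hot 1)^{\op s}}$ and $(1 \ot \B L_{D_2})_s$ with $L_{(1 \hot D_2)^{\op s}}$, so that both quantities are norms of entrywise commutators $[D_1 \ot 1, \cd]$ and $[1 \ot D_2, \cd]$ on $M_s(\C A_1 \ot \C A_2)$. Writing $d_i$ for the derivation of $(\C A_i, H_i, D_i)$, a direct computation of $[D_1 \ti D_2, a_1 \ot a_2]$ shows in each of the four parity cases that the product derivation $\de$ is assembled from two pieces: a first-factor piece of norm $(\B L_{D_1} \ot 1)_s(z)$ and a second-factor piece of norm $(1 \ot \B L_{D_2})_s(z)$. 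Concretely these pieces are $d_1(a_1) \ot a_2$ and $a_1 \ot d_2(a_2)$, possibly premultiplied by a grading unitary such as $\ga_1 \ot 1$ or $1 \ot \ga_2$ which leaves their norms unchanged; the identification of these norms with the tensor seminorms is exactly Proposition~\ref{p:stable}.

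The heart of the matter is to bound each piece by $\| \de_s(z) \| = L_{(D_1 \ti D_2)^{\op s}}(z)$, and here the device is case-dependent. In the even--even and even--odd cases $D_1 \ti D_2 = D_1 \ot 1 + \ga_1 \ot D_2$; conjugating $\de_s(z)$ by the self-adjoint unitary $(\ga_1 \ot 1)^{\op s}$ fixes the second-factor piece and negates the first (because $d_1(a_1)$ is odd for $\ga_1$), so each piece is recovered as $\tfrac12\big(\de_s(z) \mp (\ga_1 \ot 1)^{\op s} \de_s(z) (\ga_1 \ot 1)^{\op s}\big)$ and is therefore dominated in norm by $\| \de_s(z) \|$. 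The odd--even case is symmetric, using $(1 \ot \ga_2)^{\op s}$ and the oddness of $d_2(a_2)$ for $\ga_2$. In the odd--odd case $\de_s(z)$ has the off-diagonal block form $\ma{cc}{0 & P_s + i Q_s \\ P_s - i Q_s & 0}$, with $P_s$ the first-factor and $Q_s$ the second-factor piece; since the norm of such a block operator equals $\max\{ \| P_s + i Q_s \|, \| P_s - i Q_s \| \}$, the triangle inequality yields $\| P_s \|, \| Q_s \| \leq \| \de_s(z) \|$. In every case the comparison estimate holds with $D = 1$, and Theorem~\ref{t:extquamet} completes the proof.

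The main obstacle I anticipate is not any single deep step but the disciplined bookkeeping across the four parity combinations: one must track which grading operators occur, verify that the $a_i$ are even (so that the grading unitaries pass through the representation of $\C A_1 \ot \C A_2$) and that the relevant $d_i(a_i)$ is odd (so that conjugation produces the required sign), and check that everything amplifies verbatim from elementary tensors to arbitrary $z \in M_s(\C A_1 \ot \C A_2)$ uniformly in $s$. The odd--odd case requires the additional observation on the norm of off-diagonal block operators, but this is routine once the block form of $\de_s(z)$ is written out.
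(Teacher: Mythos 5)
Your proposal is correct and follows essentially the same route as the paper: reduce via Proposition~\ref{p:stable} and Theorem~\ref{t:extquamet} to the commutator-norm inequalities $\| (d_1 \ot 1)_s(z) \|, \| (1 \ot d_2)_s(z) \| \leq \| d_s(z) \|$ with comparison constant $D = 1$, and then verify these case by case using conjugation by the grading unitaries. The only (harmless) deviation is in the odd--odd case, where you extract the two pieces from the off-diagonal block form of $d_s(z)$ via the identity $\| \ma{cc}{0 & X \\ Y & 0} \| = \max\{\|X\|,\|Y\|\}$ and the triangle inequality, whereas the paper conjugates by the Pauli matrices $\si_1^{\op s}$ and $\si_2^{\op s}$; both devices are elementary and yield the same bound.
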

\begin{proof}
Let $s \in \nn$ and $z \in M_s(\C A_1 \ot \C A_2)$ be given. By Proposition \ref{p:stable} and Theorem \ref{t:extquamet} it suffices to show that
\[
L_{(D_1 \hot 1)^{\op s}}(z) \, , \, \, L_{(1 \hot D_2)^{\op s}}(z) \leq L_{(D_1 \ti D_2)^{\op s}}(z)
\]
Let us denote the derivations associated to the three unital spectral triples $(\C A_1,H_1,D_1)$, $(\C A_2,H_2,D_2)$ and $(\C A_1 \ot \C A_2, H, D_1 \ti D_2)$ by $d_1 : \C A_1 \to \B L(H_1)$, $d_2 : \C A_2 \to \B L(H_2)$ and $d : \C A_1 \ot \C A_2 \to \B L(H)$, respectively. According to the proof of Proposition \ref{p:stable} (see \eqref{eq:dhot1}) we need to establish that
\begin{equation}\label{eq:spectralineq}
\| (d_1 \ot 1)_s(z) \| \, , \, \, \| (1 \ot d_2)_s(z) \| \leq \| d_s(z) \| .
\end{equation}

We divide the proof into the four cases depending on the parities of the two unital spectral triples appearing as factors in the external product. 

{\bf Even times even and even times odd:} In these cases the relevant inequality follows since
\[
d_s(z) = (d_1 \ot 1)_s(z) + \big(\ga_1 \ot 1\big)^{\op s} \cd (1 \ot d_2)_s(z),
\]
where we are suppressing the various inclusions into $\B L(H_1 \ot_2 H_2)$. Notice also that the product appearing refers to multiplication of operators in $\B L(H_1 \ot_2 H_2)$. The above identity together with the observation that all values of the derivation $d_1$ anticommute with $\ga_1$ imply that
\[
\begin{split}
(d_1 \ot 1)_s(z) & = \frac{1}{2} d_s(z) - \frac{1}{2} (\ga_1 \ot 1)^{\op s} \cd d_s(z) \cd (\ga_1 \ot 1)^{\op s} \q \T{and} \\
(1 \ot d_2)_s(z) & = \frac{1}{2} (\ga_1 \ot 1)^{\op s} \cd  d_s(z) + \frac{1}{2} d_s(z) \cd (\ga_1 \ot 1)^{\op s} .
\end{split}
\]
We finally use that the $C^*$-norm of $\ga_1 \ot 1 \in \B L(H_1 \ot_2 H_2)$ is equal to one.

{\bf Odd times even:} The proof of the inequality from \eqref{eq:spectralineq} follows the same pattern as the two cases even times even and even times odd. 

{\bf Odd times odd:} Let us consider the Pauli matrices $\si_1 = \ma{cc}{0 & 1 \\ 1 & 0}$ and $\si_2 = \ma{cc}{0 & i \\ -i & 0}$ as selfadjoint unitary operators on the Hilbert space $H = (H_1 \ot_2 H_2)^{\op 2}$. Furthermore, we define the derivations $(d_1 \ot 1)^{\op 2} := \ma{cc}{d_1 \ot 1 & 0 \\ 0 & d_1 \ot 1}$ and $(1 \ot d_2)^{\op 2} := \ma{cc}{1 \ot d_2 & 0 \\ 0 & 1 \ot d_2}$ both with domain and codomain equal to $\C A_1 \ot \C A_2$ and $\B L\big((H_1 \ot_2 H_2)^{\op 2} \big)$, respectively. It then holds that
\[
d_s(z) = \si_1^{\op s} \cd \big(  (d_1 \ot 1)^{\op 2}\big)_s (z) + \si_2^{\op s} \cd \big(  (1 \ot d_2 )^{\op 2}\big)_s (z)
\]
and, using that the Pauli matrices anticommute, we thereby obtain that
\[
\begin{split}
\big(  (d_1 \ot 1)^{\op 2}\big)_s (z) & = \frac{1}{2} d_s(z) + \frac{1}{2} \si_1^{\op s} \cd d_s(z) \cd \si_1^{\op s} \\
\big(  (1 \ot d_2)^{\op 2}\big)_s (z) & = \frac{1}{2} d_s(z) + \frac{1}{2}\si_2^{\op s} \cd d_s(z) \cd \si_2^{\op s} .
\end{split}
\]
Since the $C^*$-norm of the operator $\big(  (d_i \ot 1)^{\op 2}\big)_s (z) \in M_s\big( \B L(H) \big)$ agrees with the $C^*$-norm of $(d_i \ot 1)_s(z) \in M_s\big( \B L(H_1 \ot_2 H_2)\big)$ for $i \in \{1,2\}$ we get the inequality from \eqref{eq:spectralineq} in this final case as well.
\end{proof}


\begin{thebibliography}{\textsc{KPTT11}}

\bibitem[\textsc{AgBi20}]{AgBi:SHC}
\textsc{K.~Aguilar} and \textsc{T.~Bice}, \emph{Standard homogeneous {$\rm
  C^*$}-algebras as compact quantum metric spaces}, Quantum
  dynamics---dedicated to {P}rofessor {P}aul {B}aum, Banach Center Publ., vol.
  120, Polish Acad. Sci. Inst. Math., Warsaw, 2020, pp.~81--110. \MR{4275881}

\bibitem[\textsc{AgKa18}]{AgKa:PSM}
\textsc{K.~Aguilar} and \textsc{J.~Kaad}, \emph{The {P}odle\'{s} sphere as a
  spectral metric space}, J. Geom. Phys. \textbf{133} (2018), 260--278.
  \MR{3850270}

\bibitem[\textsc{Agu19}]{Agu:QTC}
\textsc{K.~Aguilar}, \emph{Quantum metrics on the tensor product of a
  commutative {$\rm C^*$}-algebra and an {$\rm AF$} {$\rm C^*$}-algebra},
  Preprint (2019), 1--31, \url{https://arxiv.org/abs/1907.07357v2}.

\bibitem[\textsc{AKK22}]{AKK:PSC}
\textsc{K.~Aguilar}, \textsc{J.~Kaad}, and \textsc{D.~Kyed}, \emph{The
  {P}odle\'{s} spheres converge to the sphere}, Comm. Math. Phys. \textbf{392}
  (2022), no.~3, 1029--1061. \MR{4426737}

\bibitem[\textsc{BaJu83}]{BaJu:TBK}
\textsc{S.~Baaj} and \textsc{P.~Julg}, \emph{Th\'eorie bivariante de {K}asparov
  et op\'erateurs non born\'es dans les {$C\sp{\ast} $}-modules hilbertiens},
  C. R. Acad. Sci. Paris S\'er. I Math. \textbf{296} (1983), no.~21, 875--878.
  \MR{715325 (84m:46091)}

\bibitem[\textsc{BlLM04}]{BlMe:OMO}
\textsc{D.~P. Blecher} and \textsc{C.~Le~Merdy}, \emph{Operator algebras and
  their modules---an operator space approach}, London Mathematical Society
  Monographs. New Series, vol.~30, The Clarendon Press Oxford University Press,
  Oxford, 2004, Oxford Science Publications. \MR{2111973 (2006a:46070)}

\bibitem[\textsc{BMR10}]{BMR:DSS}
\textsc{J.~V. Bellissard}, \textsc{M.~Marcolli}, and \textsc{K.~Reihani},
  \emph{Dynamical systems on spectral metric spaces},  \texttt{arXiv:1008.4617
  [math.OA]}.

\bibitem[\textsc{ChEf77}]{ChEf:IOS}
\textsc{M.~D. Choi} and \textsc{E.~G. Effros}, \emph{Injectivity and operator
  spaces}, J. Functional Analysis \textbf{24} (1977), no.~2, 156--209.
  \MR{0430809}

\bibitem[\textsc{CoDV02}]{CoDu:NFM}
\textsc{A.~Connes} and \textsc{M.~Dubois-Violette}, \emph{Noncommutative
  finite-dimensional manifolds. {I}. {S}pherical manifolds and related
  examples}, Comm. Math. Phys. \textbf{230} (2002), no.~3, 539--579.
  \MR{1937657}

\bibitem[\textsc{CoLa01}]{CoLa:IID}
\textsc{A.~Connes} and \textsc{G.~Landi}, \emph{Noncommutative manifolds, the
  instanton algebra and isospectral deformations}, Comm. Math. Phys.
  \textbf{221} (2001), no.~1, 141--159. \MR{1846904}

\bibitem[\textsc{Con89}]{Con:CFH}
\textsc{A.~Connes}, \emph{Compact metric spaces, {F}redholm modules, and
  hyperfiniteness}, Ergodic Theory Dynam. Systems \textbf{9} (1989), no.~2,
  207--220. \MR{1007407}

\bibitem[\textsc{Con94}]{Con:NCG}
\textsc{A.~Connes}, \emph{Noncommutative geometry}, Academic Press, Inc., San
  Diego, CA, 1994. \MR{1303779 (95j:46063)}

\bibitem[\textsc{Con96}]{Con:GCM}
\bysame, \emph{Gravity coupled with matter and the foundation of
  non-commutative geometry}, Comm. Math. Phys. \textbf{182} (1996), no.~1,
  155--176. \MR{1441908 (98f:58024)}

\bibitem[\textsc{CovS21}]{CoSu:STN}
\textsc{A.~Connes} and \textsc{W.~D. van Suijlekom}, \emph{Spectral truncations
  in noncommutative geometry and operator systems}, Comm. Math. Phys.
  \textbf{383} (2021), no.~3, 2021--2067. \MR{4244265}

\bibitem[\textsc{CovS22}]{CoSu:TRO}
\bysame, \emph{Tolerance relations and operator systems}, Acta Sci. Math.
  (Szeged) \textbf{88} (2022), no.~1-2, 101--129. \MR{4500502}

\bibitem[\textsc{DaSi03}]{DaSi:DSP}
\textsc{L.~Dabrowski} and \textsc{A.~Sitarz}, \emph{Dirac operator on the
  standard {P}odle\'{s} quantum sphere}, Noncommutative geometry and quantum
  groups ({W}arsaw, 2001), Banach Center Publ., vol.~61, Polish Acad. Sci.
  Inst. Math., Warsaw, 2003, pp.~49--58. \MR{2024421}

\bibitem[\textsc{EfRu93}]{EfRu:ACO}
\textsc{E.~G. Effros} and \textsc{Z.-J. Ruan}, \emph{On the abstract
  characterization of operator spaces}, Proc. Amer. Math. Soc. \textbf{119}
  (1993), no.~2, 579--584. \MR{1163332}

\bibitem[\textsc{Fol95}]{Fol:AHA}
\textsc{G.~B. Folland}, \emph{A course in abstract harmonic analysis}, Studies
  in Advanced Mathematics, CRC Press, Boca Raton, FL, 1995. \MR{1397028}

\bibitem[\textsc{HiRo00}]{HiRo:AKH}
\textsc{N.~Higson} and \textsc{J.~Roe}, \emph{Analytic {$K$}-homology}, Oxford
  Mathematical Monographs, Oxford University Press, Oxford, 2000, Oxford
  Science Publications. \MR{1817560}

\bibitem[\textsc{HKLS81}]{HLS:CEA}
\textsc{R.~Hoegh-Krohn}, \textsc{M.~B. Landstad}, and \textsc{E.~Stormer},
  \emph{Compact ergodic groups of automorphisms}, Ann. of Math. (2)
  \textbf{114} (1981), no.~1, 75--86. \MR{625345}

\bibitem[\textsc{INT06}]{INT:PBD}
\textsc{M.~Izumi}, \textsc{S.~Neshveyev}, and \textsc{L.~Tuset}, \emph{Poisson
  boundary of the dual of {${\rm SU}_q(n)$}}, Comm. Math. Phys. \textbf{262}
  (2006), no.~2, 505--531. \MR{2200270}

\bibitem[\textsc{Kad51}]{Kad:RTC}
\textsc{R.~V. Kadison}, \emph{A representation theory for commutative
  topological algebra}, Mem. Amer. Math. Soc. \textbf{7} (1951), 39. \MR{44040}

\bibitem[\textsc{KaKy21}]{KaKy:DCQ}
\textsc{J.~Kaad} and \textsc{D.~Kyed}, \emph{Dynamics of compact quantum metric
  spaces}, Ergodic Theory Dynam. Systems \textbf{41} (2021), no.~7, 2069--2109.
  \MR{4266364}

\bibitem[\textsc{KaLe13}]{KaLe:SFU}
\textsc{J.~Kaad} and \textsc{M.~Lesch}, \emph{Spectral flow and the unbounded
  {K}asparov product}, Adv. Math. \textbf{248} (2013), 495--530. \MR{3107519}

\bibitem[\textsc{Kas80}]{Kas:OFE}
\textsc{G.~G. Kasparov}, \emph{The operator {$K$}-functor and extensions of
  {$C\sp{\ast} $}-algebras}, Izv. Akad. Nauk SSSR Ser. Mat. \textbf{44} (1980),
  no.~3, 571--636, 719. \MR{582160 (81m:58075)}

\bibitem[\textsc{Ker03}]{Ker:MQG}
\textsc{D.~Kerr}, \emph{Matricial quantum {G}romov-{H}ausdorff distance}, J.
  Funct. Anal. \textbf{205} (2003), no.~1, 132--167. \MR{2020211}

\bibitem[\textsc{KlSc97}]{KlSc:QGR}
\textsc{A.~Klimyk} and \textsc{K.~Schm\"{u}dgen}, \emph{Quantum groups and
  their representations}, Texts and Monographs in Physics, Springer-Verlag,
  Berlin, 1997. \MR{1492989}

\bibitem[\textsc{KPTT11}]{KPTT:TPO}
\textsc{A.~Kavruk}, \textsc{V.~I. Paulsen}, \textsc{I.~G. Todorov}, and
  \textsc{M.~Tomforde}, \emph{Tensor products of operator systems}, J. Funct.
  Anal. \textbf{261} (2011), no.~2, 267--299. \MR{2793115}

\bibitem[\textsc{Kuc97}]{Kuc:PUM}
\textsc{D.~Kucerovsky}, \emph{The {$KK$}-product of unbounded modules},
  $K$-Theory \textbf{11} (1997), no.~1, 17--34. \MR{1435704 (98k:19007)}

\bibitem[\textsc{Lat16}]{Lat:QGP}
\textsc{F.~Latr\'{e}moli\`ere}, \emph{The quantum {G}romov-{H}ausdorff
  propinquity}, Trans. Amer. Math. Soc. \textbf{368} (2016), no.~1, 365--411.
  \MR{3413867}

\bibitem[\textsc{Lat19}]{Lat:MGP}
\bysame, \emph{The modular {G}romov-{H}ausdorff propinquity}, Dissertationes
  Math. \textbf{544} (2019), 70. \MR{4036723}

\bibitem[\textsc{Lat20}]{Lat:CGP}
\bysame, \emph{The covariant {G}romov-{H}ausdorff propinquity}, Studia Math.
  \textbf{251} (2020), no.~2, 135--169. \MR{4045657}

\bibitem[\textsc{Lat22}]{Lat:PMS}
\bysame, \emph{The {G}romov-{H}ausdorff propinquity for metric spectral
  triples}, Adv. Math. \textbf{404} (2022), no.~part A, Paper No. 108393, 56.
  \MR{4411527}

\bibitem[\textsc{Li05}]{Li:DCM}
\textsc{H.~Li}, \emph{{$\theta$}-deformations as compact quantum metric
  spaces}, Comm. Math. Phys. \textbf{256} (2005), no.~1, 213--238. \MR{2134342}

\bibitem[\textsc{Li09}]{Li:CQM}
\bysame, \emph{Compact quantum metric spaces and ergodic actions of compact
  quantum groups}, J. Funct. Anal. \textbf{256} (2009), no.~10, 3368--3408.
  \MR{2504529}

\bibitem[\textsc{MeRe16}]{MeRe:NST}
\textsc{B.~Mesland} and \textsc{A.~Rennie}, \emph{Nonunital spectral triples
  and metric completeness in unbounded {$KK$}-theory}, J. Funct. Anal.
  \textbf{271} (2016), no.~9, 2460--2538. \MR{3545223}

\bibitem[\textsc{Mes14}]{Mes:UCN}
\textsc{B.~Mesland}, \emph{Unbounded bivariant {$K$}-theory and correspondences
  in noncommutative geometry}, J. Reine Angew. Math. \textbf{691} (2014),
  101--172. \MR{3213549}

\bibitem[\textsc{NeTu05}]{NeTu:LIQ}
\textsc{S.~Neshveyev} and \textsc{L.~Tuset}, \emph{A local index formula for
  the quantum sphere}, Comm. Math. Phys. \textbf{254} (2005), no.~2, 323--341.
  \MR{2117628}

\bibitem[\textsc{Pav98}]{Pav:DOA}
\textsc{B.~Pavlovi\'{c}}, \emph{Defining metric spaces via operators from
  unital {$C^*$}-algebras}, Pacific J. Math. \textbf{186} (1998), no.~2,
  285--313. \MR{1663810}

\bibitem[\textsc{Pod87}]{Pod:QS}
\textsc{P.~Podle\'{s}}, \emph{Quantum spheres}, Lett. Math. Phys. \textbf{14}
  (1987), no.~3, 193--202. \MR{919322}

\bibitem[\textsc{Rie90}]{Rie:NTN}
\textsc{M.~A. Rieffel}, \emph{Noncommutative tori---a case study of
  noncommutative differentiable manifolds}, Geometric and topological
  invariants of elliptic operators ({B}runswick, {ME}, 1988), Contemp. Math.,
  vol. 105, Amer. Math. Soc., Providence, RI, 1990, pp.~191--211. \MR{1047281}

\bibitem[\textsc{Rie98}]{Rie:MSA}
\bysame, \emph{Metrics on states from actions of compact groups}, Doc. Math.
  \textbf{3} (1998), 215--229. \MR{1647515}

\bibitem[\textsc{Rie99}]{Rie:MSS}
\bysame, \emph{Metrics on state spaces}, Doc. Math. \textbf{4} (1999),
  559--600. \MR{1727499}

\bibitem[\textsc{Rie04}]{Rie:GHQ}
\bysame, \emph{Gromov-{H}ausdorff distance for quantum metric spaces}, Mem.
  Amer. Math. Soc. \textbf{168} (2004), no.~796, 1--65, Appendix 1 by Hanfeng
  Li, Gromov-Hausdorff distance for quantum metric spaces. Matrix algebras
  converge to the sphere for quantum Gromov-Hausdorff distance. \MR{2055927}

\bibitem[\textsc{Rie22}]{Rie:CFT}
\bysame, \emph{Convergence of {F}ourier truncations for compact quantum groups
  and finitely generated groups}, Preprint (2022), 1--22,
  \url{https://arxiv.org/abs/2210.00387v3}.

\bibitem[\textsc{Rua88}]{Rua:SCA}
\textsc{Z.-J. Ruan}, \emph{Subspaces of {$C\sp *$}-algebras}, J. Funct. Anal.
  \textbf{76} (1988), no.~1, 217--230. \MR{923053 (89h:46082)}

\bibitem[\textsc{Sai09}]{Sain:Thesis}
\textsc{J.~Sain}, \emph{Berezin quantization from ergodic actions of compact
  quantum groups, and quantum gromov-hausdorff distance}, Preprint (2009),
  1--117, \url{https://arxiv.org/abs/0906.1829v1}.

\bibitem[\textsc{vS21}]{Sui:GSS}
\textsc{W.~D. van Suijlekom}, \emph{Gromov-{H}ausdorff convergence of state
  spaces for spectral truncations}, J. Geom. Phys. \textbf{162} (2021), Paper
  No. 104075, 11. \MR{4199398}

\bibitem[\textsc{Wor87a}]{Wor:CMP}
\textsc{S.~L. Woronowicz}, \emph{Compact matrix pseudogroups}, Comm. Math.
  Phys. \textbf{111} (1987), no.~4, 613--665. \MR{901157}

\bibitem[\textsc{Wor87b}]{Wor:TGN}
\bysame, \emph{Twisted {${\rm SU}(2)$} group. {A}n example of a noncommutative
  differential calculus}, Publ. Res. Inst. Math. Sci. \textbf{23} (1987),
  no.~1, 117--181. \MR{890482}

\end{thebibliography}

\bibliographystyle{amsalpha-lmp}

\providecommand{\bysame}{\leavevmode\hbox to3em{\hrulefill}\thinspace}
\providecommand{\MR}{\relax\ifhmode\unskip\space\fi MR }
\providecommand{\MRhref}[2]{%
  \href{http://www.ams.org/mathscinet-getitem?mr=#1}{#2}
}
\providecommand{\href}[2]{#2}

\end{document}